\documentclass[onefignum,onetabnum]{siamart220329}

\usepackage{amsfonts,bm,bbm}
\usepackage{algorithmic}
\usepackage{enumerate}
\usepackage{appendix}
\usepackage{comment}
\usepackage{graphicx}
\usepackage[caption=false]{subfig}
\usepackage[caption=false]{subfig}

\newsiamthm{assumption}{Assumption}
\newsiamremark{condition}{Condition}
\newsiamremark{remark}{Remark}

\newcommand{\bbR}{\mathbb{R}}

\newcommand{\dd}{\textnormal{d}}
\newcommand{\vt}[1]{\left\vert #1 \right\vert}
\newcommand{\vvt}[1]{\left\Vert #1 \right\Vert}
\newcommand{\vta}[1]{\vert #1 \vert}
\newcommand{\vvta}[1]{\Vert #1 \Vert}
\newcommand{\lr}[1]{\left( #1 \right)}
\newcommand{\lrr}[1]{\left[ #1 \right]}
\newcommand{\lrrr}[1]{\left\{ #1 \right\}}

\newcommand{\ie}{\textit{i}.\textit{e}., }

\newcommand{\tn}[1]{\textnormal{#1}}

\usepackage[colorinlistoftodos]{todonotes}
\usepackage[square,numbers]{natbib}
\newcommand\newtext[1]{#1}

\title{New Methods for Parametric Optimization via Differential Equations}
\author{Heyuan Liu\thanks{Department of Industrial Engineering and Operations Research, University of California, Berkeley, CA 94720
({mailto:  heyuan\_liu@berkeley.edu}).}
\and Paul Grigas\thanks{Department of Industrial Engineering and Operations Research, University of California, Berkeley, CA 94720
({mailto:  pgrigas@berkeley.edu}).  This author's research is supported, in part, by NSF AI Institute for Advances in Optimization Award 2112533.}}
\date{} %

\begin{document}
\maketitle

\begin{abstract}
    We develop and analyze several different second-order algorithms for computing a near-optimal solution path of a convex parametric optimization problem with smooth Hessian. Our algorithms are inspired by a differential equation perspective on the parametric solution path and do not rely on the specific structure of the objective function. 
    We present computational guarantees that bound the oracle complexity to achieve a near-optimal solution path under different sets of smoothness assumptions. 
    Under the assumptions, the results are an improvement over the best-known results of the grid search methods. 
    We also develop second-order conjugate gradient variants that avoid exact computations of Hessians and solving of linear equations. 
    We present computational results that demonstrate the effectiveness of our methods over grid search methods on both real and synthetic datasets. 
    On large-scale problems, we demonstrate significant speedups of the second-order conjugate variants as compared to the standard versions of our methods.
\end{abstract}

\section{Introduction}

    In many applications of interest, it is necessary to solve not just a single optimization problem but also an entire collection of related problems. In these settings, some or all of the objects involved in defining the objective function or constraints of an optimization problem depend on one or more parameters, and we would like to solve the problem as a function of these parameters.
    Generally, a \emph{parametric optimization problem} can be written as: 
    \begin{equation}\label{eq:param-opt}
        P(\lambda): \quad \min_{x \in S(\lambda)} F(x, \lambda), 
    \end{equation}
    where $\lambda$ belongs to the set of interest $\Lambda \subseteq \bbR^m$, and the feasible sets satisfy $S(\lambda) \subseteq \bbR^p$. 
    There are many problems of interest that are formulated as parametric optimization problems of the form \cref{eq:param-opt}. Subsequently, as indicated by \citet{guddat1990parametric}, there are several strong motivations to design algorithms for $\cref{eq:param-opt}$, including but not limited to: {\em (i)} the need to solve problems arising in application areas like regularized regression with cross-validation (see, e.g., \citet{osborne2000new}) and model predictive control (see, e.g., \citet{garcia1989model}), {\em (ii)} as a building block for developing globally convergent algorithms by the approach of path-following as is done in interior-point methods (see, e.g., \citet{nesterov1994interior}), and {\em (iii)} the need to address multi-objective optimization, for instance, finding the Pareto frontier of a two-objective optimization problem. 
    Depending on the assumptions made, the goal may be to find global/local optimal solutions or Karush–Kuhn–Tucker (KKT) points of the problem $P(\lambda)$ for $\lambda \in \Lambda$. 
    
    In the rest of the paper, we will focus on a more specific problem, in which we assume that: {\em (i)} the dependence on $\lambda$ is linear, that is, $F(x, \lambda)$ can be written as $f(x) + \lambda \cdot \Omega(x)$, {\em (ii)} both $f(\cdot)$ and $\Omega(\cdot)$ are convex functions with certain properties, and {\em (iii)} the feasible set $S(\lambda)$ is the entire vector space $\bbR^p$ for all $\lambda \in \Lambda$. 
	That is, we focus on the parametric optimization problem: 
	\begin{equation}\label{eq:param-opt-1}
		P(\lambda): \quad F_{\lambda}^* := \min_{x \in \bbR^p} \lrrr{F_{\lambda}(x) := f(x) + \lambda \cdot \Omega(x)}, 
	\end{equation}
	where $f(\cdot): \bbR^p \rightarrow \bbR$ and $\Omega(\cdot): \bbR^p \rightarrow \bbR$ are twice-differentiable functions such that $f(\cdot)$ is $\mu$-strongly convex for some $\mu \ge 0$ and $\Omega(\cdot)$ is $\sigma$-strongly convex for some $\sigma > 0$, both with respect to the $\ell_2$-norm (denoted by $\vvt{\cdot}$ herein). 
	For any $\lambda > 0$, let 
	\begin{equation}\label{eq:reg-path}
		x(\lambda) := \arg \min_{x \in \bbR^p} F_{\lambda}(x)
	\end{equation}
	denote the unique optimal solution of $P(\lambda)$ defined in \cref{eq:param-opt-1}. 
	We are interested in the problem of (approximately) computing the set of optimal solutions $\lrrr{x(\lambda): \lambda \in \Lambda}$ where $\Lambda = [\lambda_{\min}, \lambda_{\max}]$ is the set of interest for some $0 < \lambda_{\min} < \lambda_{\max}$, and we also refer to this set of solutions as the \emph{(exact) solution path}. 
	An important set of problems in practice and a popular line of research involves computing the solution path of regularized machine learning problems, including the LASSO as in \citet{efron2004least,osborne2000new} and the SVM problem as in \citet{hastie2004entire}. 
	In these works, algorithms are designed to compute the exact piecewise-linear solution path. 
	Also, in the context of interior-point methods for constrained convex optimization (see, for instance, \citet{nesterov1994interior} and \citet{renegar2001mathematical}), $f(\cdot)$ represents the objective function and $\Omega(\cdot)$ represents the barrier function induced by the constraints of the original problem. Note that the application to interior-point methods requires a slightly more general variant of problem \eqref{eq:param-opt-1} where $\Omega(\cdot): \bbR^p \rightarrow \bbR \cup \{+\infty\}$.
	 Interior-point methods generally start with the problem $P(\lambda)$ for a moderately large $\lambda_0$ and terminate when $\lambda_k < \delta$ for some small enough positive threshold $\delta$.
  
	Recently, there has been growing interest in developing algorithms for computing an approximate solution path of a generic problem such as \eqref{eq:reg-path}. \citet{rosset2004tracking} consider applying exact Newton steps on prespecified grids, and \citet{ndiaye2019safe} consider adaptive methods to discretize the interval $[\lambda_{\min}, \lambda_{\max}]$, for example.
	These grid search type methods, which discretize the interval $[\lambda_{\min}, \lambda_{\max}]$ and subsequently solve a sequence of individual optimization problems, take a very black-box approach.
	A natural question is: Can we ``open the black-box" by developing a methodology that better exploits the structure of the solution path? 
	We answer this question positively by introducing a differential equation perspective to analyze the solution path, which enables us to better reveal and exploit the underlying structure of the solution path. 
	This deeper understanding enables us to build more efficient algorithms and present improved computational guarantees. 
	
	In particular, we derive an ordinary differential equation with an initial condition whose solution is the exact solution path of \eqref{eq:param-opt-1}. The dynamics of the ODE that we derive resemble, but are distinct from, the dynamics of a path-wise version of Newton's method.
	Based on the ODE, we propose efficient algorithms to generate \emph{approximate solution paths} $\hat{x}(\lambda): \lambda \in [\lambda_{\min}, \lambda_{\max}] \to \bbR^p$ and provide the corresponding complexity analysis. 
	The metric we consider is the $\ell_2$-norm of the gradient of the regularized problem, namely $\vvt{\nabla F_{\lambda}(\hat{x}(\lambda))}_2$, and we use the largest norm along the approximate path $\sup_{\lambda \in \Lambda} \vvt{\nabla F_{\lambda}(\hat{x}(\lambda))}_2$ to represent the accuracy of an approximate path $\hat{x}(\lambda)$ (as formally defined in \cref{def:accuracy}). 
	To analyze the computational cost of our proposed algorithms, we consider the oracle complexity -- either in terms of full Hessian or Hessian-vector product/gradient evaluations -- to obtain an $\epsilon$-accurate solution path. Note that considering the oracle complexity is in contrast to other works that consider the number of individual optimization problems that need to be solved (see, for example, \citet{giesen2012approximating,ndiaye2019safe}), as well as the number of ordinary differential equations that need to be solved (see, for example, \citet{zhou2015path}). 
    
	\subsection{Contributions}\label{sec:contribution}
	The first set of contributions of this paper concern the perspective of the solution path of \eqref{eq:param-opt-1} from an ordinary differential equation point of view. 
	We derive an ordinary differential equation with an initial condition whose solution is the solution path of \eqref{eq:param-opt-1}, based on the first-order optimality conditions of \eqref{eq:param-opt-1}. 
	With this observation, we propose a novel and efficient way to approximate the entire solution path. 
	Our derivation does not rely on the special structure of the optimization problem, like existing results in the solution path of LASSO or SVM problems, and holds for general objective functions. 

	The second set of contributions of this paper concern the design of efficient algorithms and the corresponding oracle complexity analysis. 
	Classical error analysis of numerical ordinary differential equation methods \citep{gautschi2011numerical, scott2011numerical} provides only asymptotic results, and the global error has an exponential dependency on the Lipschitz constant and the length of the time period. 
	In contrast, we design new update schemes to compute an approximate solution path and develop nonasymptotic complexity bounds. 
	In particular, we apply a semi-implicit Euler method on the ordinary differential equation in order to compute the approximate optimal solutions under a finite set of penalty coefficients. 
	Then, we incorporate linear interpolation, which was usually missing either in practice or in the lower bound complexity analysis, to generate nearly optimal solutions under other penalty coefficients within the range of parameter values of interest.
	The two-step algorithm guarantees an $\epsilon$-accurate solution path within at most $\mathcal{O}(\frac{1}{\epsilon})$ gradient and Hessian evaluations as well as linear equation system solves.
	When the objective function has higher-order smoothness properties, we modify the traditional trapezoid method in numerical differential equations and design a new update scheme, which guarantees an $\epsilon$-path within at most $\mathcal{O}(\frac{1}{\sqrt{\epsilon}})$ Hessian evaluations. 
	It is important to emphasize that the complexity results in this paper are in terms of the number of operations (for example, Hessian evaluations), rather than the number of sub-problems that need to be solved (for example, solving a single numerical ODE or individual optimization problems) as has been studied in prior work such as \citet{giesen2012approximating, zhou2015path} and \citet{ndiaye2019safe}. 
	\newtext{Further extensions of our proposed algorithms to Runge-Kutta methods are possible but we omit them for brevity.}
	We also provide a detailed computational evaluation of our algorithms and existing methods, including several experiments with synthetic data, the breast cancer dataset \cite{Dua:2019}, and the leukemia dataset \cite{golub1999molecular}.
	
	The third set of contributions of the paper concerns second-order conjugate gradient type methods and computational guarantees in the presence of inexact gradient and Hessian oracles, as well as approximate linear equation solvers. 
	When the dimension of the problem is high, computing the Hessian and/or solving linear systems becomes a computational bottleneck.
	To avoid this, one would like an algorithm that only requires approximate directions at each iteration. 
	We first consider the case where the (absolute) numerical error incurred in the calculation of a new direction $d_k$ is bounded by some $\delta_k > 0$. 
	We show that our algorithms are robust to numerical error in the sense that the additional errors of inexact directions do not accumulate and do not depend on the condition number. 
	We extend the complexity analysis to the case where the numerical error $\delta_k$ has a uniform upper bound $\alpha \epsilon$ for $\alpha \in (0, 1)$ and show that the Euler method maintains $\mathcal{O}(\frac{1}{\epsilon})$ complexity, and the trapezoid method maintains $\mathcal{O}(\frac{1}{\sqrt{\epsilon}})$ complexity when there is higher-order smoothness. 
	We then propose variations of the algorithms mentioned before that only require gradient and Hessian-vector product oracles, rather than gradient and Hessian oracle, as well as a linear system solver. 
	We also leverage the previous analysis in the case of inexact directions in order to provide computational complexity results for the second-order conjugate gradient-type algorithms, which have the same order of $\epsilon$ as the results for the exact methods mentioned above.
	Our results demonstrate that our algorithms are more robust and second-order conjugate gradient variations require less computational cost compared to existing methods. 
	
	\subsection{Related Literature}\label{sec:literature}
    We now discuss previous work related to our algorithm and analysis from three distinct aspects. 
    	
    \paragraph{Other path methods and comparison of results}
    As previously mentioned, for the LASSO and SVM problems, the exact solution path is piecewise linear and can be computed by the path following methods such as the least angle regression (LARS) algorithm proposed by \citet{efron2004least,hastie2004entire} and \citet{osborne2000new}. 
    Additional problems whose solution paths are piecewise linear are considered by \citet{rosset2007piecewise}, and \citet{mairal2012complexity} showed that the number of breakpoints in the solution path can be exponential in the number of data points. 
    Generalized linear regression problems with $\ell_1$ regularization are considered by \citet{yuan2009efficient} and \citet{zhou2014generic} via LARS based methods. 
    Another line of work focuses on computing approximate solution paths for specific problems, including the elastic net in \citet{friedman2010regularization}, the SVM problem in \citet{bach2006considering} and \cite{giesen2012approximating}, matrix completion with nuclear norm regularization in \citet{mazumder2010spectral}, and other structural convex problems in \citet{giesen2012regularization} and  \citet{loosli2007regularization}. 
    For problems with nonconvex but coordinate decomposable regularization functions, a coordinate descent-based algorithm was considered by \citet{mazumder2011sparsenet} and \citet{wang2014optimal}. 
    Closest to our problem set up, \citet{rosset2004tracking} considered a general problem when $f(\cdot)$ and $\Omega(\cdot)$ have third-order derivatives and provided an algorithm which applied exact Newton steps on equally spaced grids starting from the optimal solution of the nonregularized problem. 
    The lower bound complexity analysis when the approximate solution path is limited to a piecewise constant function is considered by \citet{giesen2012approximating, ndiaye2019safe}. 
    	
    \paragraph{Related global complexity analysis of second-order methods}
    Newton-like methods are an important class of algorithms in optimization. 
    Some notable lines of work include interior-point methods (\citet{nesterov1994interior, renegar2001mathematical}) and applications in regression problems (\citet{kim2007interior}) as well as the Levenberg-Marquardt method (\citet{more1978levenberg}). 
    In practice, techniques are often incorporated to ensure global convergence, such as line searches (\citet{ralph1994global}) and trust-region techniques (\citet{conn2000trust}). 
    The global complexity analysis of Newton and higher-order methods with regularization has also received much recent interest, such as the work of \citet{nesterov2018implementable, nesterov2006cubic} and \citet{polyak2009regularized} and the references therein. In our paper, we make similar types of assumptions as in the global complexity analysis of regularized second and higher-order methods and we also prove global complexity results for the class of second-order algorithms considered herein. 
    
    \paragraph{Related work on differential equations and optimization methods}
    Early works, including the work of \citet{alvarez2001inertial} and \citet{alvarez2002second}, analyzed inertial dynamical systems driven by gradient and Newton flows with application to optimization problems. 
    Newton-like dynamic systems of monotone inclusions with connections to Levenberg-Marquardt and regularized Newton methods were also analyzed by \citet{abbas2014newton, attouch2011continuous} and \citet{bot2016second}. 
    Due to the recent popularity of the accelerated gradient descent method in the machine learning and optimization communities, the limiting dynamics of different optimization methods and their discretization have thus received much renewed interest in recent years; see \citet{attouch2018fast, scieur2017integration, su2014differential, wibisono2016variational, zhang2018direct} and the references therein. 
	
	\subsection{Organization}
	The paper is organized as follows. 
    In \Cref{sec:ode}, we derive the ordinary differential equation with an initial condition whose solution is the exact solution path \eqref{eq:reg-path} and provide the existence and uniqueness of the solution of the differential equation. 
    In \Cref{sec:discretizations}, we leverage the ODE to develop a numerical algorithm to compute the approximate solution path of \eqref{eq:param-opt-1}, and derive the corresponding complexity analysis in \cref{thm:err-ctrl-exp}. 
    In \Cref{sec:multi}, we propose a multistage method that is beneficial when the functions $f(\cdot)$ and $\Omega(\cdot)$ have higher order smoothness, and we also provide its complexity analysis in \cref{thm:err-ctrl-trapezoid}. 
    In \Cref{sec:inexact}, we extend our results in the presence of inexact oracles, and as a direct application we propose second-order conjugate gradient variants of the aforementioned algorithms, which avoid exact Hessian evaluations and exact solutions of linear systems. 
    \Cref{sec:experiments} contains a detailed computational experiments of the proposed algorithms and grid search methods on real and synthetic datasets. 

    \subsection{Notation}
    For a positive integer $n$, let $[n] := \{1, \dots, n\}$. 
    For a vector-valued function $y(t): \bbR \rightarrow \bbR^p$ that can be written as $y(t) = (y_1(t), \dots, y_p(t))$, we say that $y(\cdot)$ is differentiable if $y_i(\cdot)$ is differentiable for all $i = 1, \dots, p$ and let $\frac{\dd y}{\dd t}$ be the derivative of $y(t)$, namely $\frac{\dd y}{\dd t} = (\frac{\dd y_1}{\dd t}, \dots, \frac{\dd y_p}{\dd t})$. 
    Let ${\bm 1}_p$ and ${\bm 1}_{p \times p}$ denote the $p$-dimensional all-ones vector and the $p \times p$ all-ones matrix, respectively. 
    Throughout the paper, we fix the norm $\vvta{\cdot}$ on $\bbR^p$ to be the $\ell_2$-norm, which is defined by $\vvta{x} := \vvta{x}_2 = \sqrt{x^T x}$.
    Also, in a slight abuse of notation, we use $\vvta{\cdot}$ to represent the operator norm, i.e., the induced $\ell_2$-norm $\vvta{\cdot}_2$ on $\bbR^{n \times p}$, which is defined by $\vvta{A} := \vvta{A}_{2,2} = \max_{\vvta{x}_2 \le 1} \vvta{A x}_2$.

\section{Ordinary Differential Equation Characterization of Solution Path}\label{sec:ode}

    Let us begin by describing a differential equations perspective on the solution path \cref{eq:reg-path} of \cref{eq:param-opt-1} that will prove fruitful in the development of efficient computational methods.
	First, we introduce a reparameterization in terms of an auxiliary variable $t \geq 0$ (thought of as ``time''), whereby for a given $T > 0$ we introduce functions $\lambda(\cdot) : [0, T] \to [\lambda_{\min}, \lambda_{\max}]$ and $\xi(\cdot) : [\lambda_{\min}, \lambda_{\max}] \to \bbR$ such that: $\xi(\cdot)$ is Lipschitz, $\lambda(\cdot)$ is differentiable on $(0, T)$, and we have $\frac{\dd \lambda}{\dd t} = \xi(\lambda(t))$ for all $t \in (0, T)$. In a slight abuse of notation, we define the path with respect to $t$ as $\lrrr{x(t) := x(\lambda(t)) : t \in [0, T]}$.
	Now notice that, for any $t \in [0, T]$, the first-order optimality condition for the problem $P(\lambda(t))$ states that $\nabla f(x(t)) + \lambda(t) \nabla \Omega(x(t)) = 0$. 
	By differentiating both sides of the previous equation with respect to $t$, we have $\nabla^2 f(x(t)) \cdot \frac{\dd x}{\dd t} + \nabla \Omega(x(t)) \cdot \frac{\dd \lambda}{\dd t} + \lambda(t) \nabla^2 \Omega(x(t)) \cdot \frac{\dd x}{\dd t} = 0$. 
	Rearranging the above and again using $\frac{\dd \lambda}{\dd t} = \xi(\lambda(t))$ yields $\frac{\dd x}{\dd t} = - \lr{\nabla^2 f(x(t)) + \lambda(t) \nabla^2 \Omega(x(t))}^{-1} \xi(\lambda(t)) \nabla \Omega(x(t))$. 
	Then, applying the fact that $\nabla f(x(t)) + \lambda(t) \nabla \Omega(x(t)) = 0$ yields $\frac{\dd x}{\dd t} = \lr{\nabla^2 f(x(t)) + \lambda(t) \nabla^2 \Omega(x(t))}^{-1} \cdot \frac{\xi(\lambda(t))}{\lambda(t)} \nabla f(x(t))$. 
	Thus, we arrive at the following autonomous system 
	\begin{equation}\label{eq:reg-dynamic-2}
	    \frac{\dd \lambda}{\dd t} = \xi(\lambda), \quad
		\frac{\dd x}{\dd t} = v(x, \lambda) := \lr{\nabla^2 f(x) + \lambda \nabla^2 \Omega(x)}^{-1} \frac{\xi(\lambda)}{\lambda} \nabla f(x), 
	\end{equation}
	for $t \in [0, T]$. 
	
	By considering specific choices of $\xi(\cdot)$ and $\Omega(\cdot)$, the system \eqref{eq:reg-dynamic-2} generalizes some previously studied methodologies in parameteric optimization.
	First, consider the scenario with an equally spaced discretization of the interval $[0, T]$, namely $t_k = k \cdot h$ for some fixed step-size $h > 0$. 
	Thus, the sequence $\lambda_k := \lambda(t_k)$ is approximately given by $\lambda_{k+1} \approx \lambda_k + h \cdot \xi(\lambda_k)$. 
	Intuitively, the choice of $\xi(\cdot)$ controls the dynamic of $\lambda(\cdot)$ and generalizes some previously considered sequences $\{\lambda_k\}$ for the problem \eqref{eq:param-opt-1}. 
	For example, letting $\xi(\lambda) \equiv 1$ we recover the arithmetic sequence in \citet{rosset2004tracking} and letting $\xi(\lambda) \equiv -\lambda$ we recover the geometric sequence in \citet{ndiaye2019safe}. 
	In addition, consider the special case where $\Omega(x) = \frac12 \vvt{x}^2$. Then the dynamic for $x(t)$ in \cref{eq:reg-dynamic-2} is similar to the limiting dynamic for the proximal Newton method (also known as the Levenberg-Marquardt regularization procedure \cite{marquardt1963algorithm} for convex optimization problems). 
	The property of a similar dynamic of monotone inclusion is analyzed in \citet{attouch2016dynamic, attouch2011continuous}, which includes finding zero of the gradient of a convex function. 

    Before developing and presenting algorithms designed to compute the approximate solution path based on \eqref{eq:reg-dynamic-2}, we first verify that the system \eqref{eq:reg-dynamic-2} has a unique trajectory. 
    The following proposition states conditions on $f(\cdot)$ and $\Omega(\cdot)$ such that $v(\cdot, \cdot)$ defined in \cref{eq:reg-dynamic-2} is continuous in $\lambda \in [\lambda_{\min}, \lambda_{\max}]$ and is uniformly $L_v$-Lipschitz continuous with respect to $x$, namely, $\vvta{v(x_1, \lambda) - v(x_2, \lambda)} \le L_v \vvta{x_1 - x_2}$ for any $\lambda \in [\lambda_{\min}, \lambda_{\max}], x_1, x_2 \in \bbR^p$. 
    This uniform Lipschitz property ensures that the above system has a unique trajectory, which therefore coincides with the solution path defined in \eqref{eq:reg-path}.

	\begin{proposition}[Theorem 5.3.1  of \citet{gautschi2011numerical}]\label{prop:ode}
	    Suppose that $\nabla^2 f(\cdot)$, $\nabla^2 \Omega(\cdot)$, $\nabla f(\cdot)$, $f(\cdot)$ are all $L$-Lipschitz continuous, $f(\cdot)$ is $\mu$-strongly convex for $\mu \geq 0$, and $\Omega(\cdot)$ is $\sigma$-strongly convex for $\sigma > 0$. Suppose further that $\xi(\cdot)$ is Lipschitz continuous on $[\lambda_{\min}, \lambda_{\max}]$ and satisfies $\vta{\frac{\xi(\lambda)}{\lambda}} \le C$ for all $\lambda \in [\lambda_{\min}, \lambda_{\max}]$. Then, it holds that $v(\cdot, \cdot)$ defined in \cref{eq:reg-dynamic-2} is continuous in $\lambda \in [\lambda_{\min}, \lambda_{\max}]$ and is uniformly $L_v$-Lipschitz continuous with $L_v = \frac{LC}{\mu+\lambda_{\min}\sigma} + \frac{L^2 C (1+\lambda_{\max})}{(\mu+\lambda_{\min}\sigma)^2}$, 
	    and \cref{eq:reg-dynamic-2} has a unique trajectory $(\lambda(t), x(t))$ for $t \in [0, T]$.
	\end{proposition}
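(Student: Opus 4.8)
The plan is to establish the two analytic properties of $v(\cdot,\cdot)$ — continuity in $\lambda$ and uniform Lipschitz continuity in $x$ — and then invoke the cited Picard–Lindel\"of-type theorem to conclude existence and uniqueness of the trajectory. Throughout I would abbreviate the regularized Hessian as $H(x,\lambda) := \nabla^2 f(x) + \lambda \nabla^2 \Omega(x)$, so that $v(x,\lambda) = \frac{\xi(\lambda)}{\lambda} H(x,\lambda)^{-1} \nabla f(x)$. The first step is a spectral lower bound: since $f$ is $\mu$-strongly convex and $\Omega$ is $\sigma$-strongly convex, we have $\nabla^2 f(x) \succeq \mu I$ and $\nabla^2 \Omega(x) \succeq \sigma I$, hence $H(x,\lambda) \succeq (\mu + \lambda \sigma)I \succeq (\mu + \lambda_{\min}\sigma)I$ for every $\lambda \in [\lambda_{\min},\lambda_{\max}]$. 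This shows $H(x,\lambda)$ is invertible with $\vvt{H(x,\lambda)^{-1}} \le \frac{1}{\mu + \lambda_{\min}\sigma}$ uniformly in $x$ and $\lambda$, which is the workhorse bound for everything that follows.

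For the Lipschitz estimate in $x$, fix $\lambda$ and write the difference $v(x_1,\lambda) - v(x_2,\lambda)$ via the telescoping decomposition
\begin{equation*}
\begin{aligned}
&H(x_1,\lambda)^{-1}\nabla f(x_1) - H(x_2,\lambda)^{-1}\nabla f(x_2) \\
&\qquad = H(x_1,\lambda)^{-1}\lr{\nabla f(x_1) - \nabla f(x_2)} + \lr{H(x_1,\lambda)^{-1} - H(x_2,\lambda)^{-1}}\nabla f(x_2).
\end{aligned}
\end{equation*}
The first term is controlled by the uniform resolvent bound above together with the $L$-Lipschitz continuity of $\nabla f$, yielding a bound of $\frac{L}{\mu+\lambda_{\min}\sigma}\vvt{x_1 - x_2}$. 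For the second term I would use the resolvent identity $A^{-1} - B^{-1} = A^{-1}(B - A)B^{-1}$ to rewrite the matrix difference as $H(x_1,\lambda)^{-1}\lr{H(x_2,\lambda) - H(x_1,\lambda)}H(x_2,\lambda)^{-1}$; the $L$-Lipschitz continuity of both $\nabla^2 f$ and $\nabla^2 \Omega$ gives $\vvt{H(x_2,\lambda) - H(x_1,\lambda)} \le L(1+\lambda)\vvt{x_1-x_2} \le L(1+\lambda_{\max})\vvt{x_1 - x_2}$, and two applications of the resolvent bound turn this into a factor $\frac{L(1+\lambda_{\max})}{(\mu+\lambda_{\min}\sigma)^2}$.

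The one subtle point — and the step I expect to be the crux — is that the second term also carries a factor $\vvt{\nabla f(x_2)}$, which a priori is unbounded as $\vvt{x_2} \to \infty$; a naive argument would therefore only yield a \emph{local} Lipschitz constant. This is exactly where the hypothesis that $f(\cdot)$ itself (not merely $\nabla f$) is $L$-Lipschitz does the essential work: it forces $\vvt{\nabla f(x)} \le L$ for all $x$, so the second term is bounded globally by $\frac{L^2(1+\lambda_{\max})}{(\mu+\lambda_{\min}\sigma)^2}\vvt{x_1 - x_2}$. Combining the two terms and multiplying by the uniform bound $\vta{\xi(\lambda)/\lambda} \le C$ produces precisely the stated constant $L_v = \frac{LC}{\mu+\lambda_{\min}\sigma} + \frac{L^2 C(1+\lambda_{\max})}{(\mu+\lambda_{\min}\sigma)^2}$, uniformly over all $\lambda \in [\lambda_{\min},\lambda_{\max}]$ and all $x_1, x_2 \in \bbR^p$.

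Finally, continuity of $v$ in $\lambda$ follows because $\lambda \mapsto H(x,\lambda)$ is affine and stays uniformly invertible on the compact interval (so its inverse is continuous in $\lambda$), while $\xi(\lambda)$ is continuous by hypothesis and $1/\lambda$ is continuous on $[\lambda_{\min},\lambda_{\max}]$ since $\lambda_{\min} > 0$; the product of continuous factors is continuous. With $v$ continuous in $\lambda$ and uniformly $L_v$-Lipschitz in $x$, the hypotheses of the cited Theorem 5.3.1 of \citet{gautschi2011numerical} are met, so existence and uniqueness of the trajectory $(\lambda(t), x(t))$ on $[0,T]$ follow immediately; uniqueness together with the derivation preceding \cref{eq:reg-dynamic-2} then identifies this trajectory with the solution path \eqref{eq:reg-path}.
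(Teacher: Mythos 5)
Your proposal is correct and follows essentially the same route as the paper's own proof: the same splitting of $v(x_1,\lambda)-v(x_2,\lambda)$ into a gradient-difference term and an inverse-Hessian-difference term, the same uniform bound $\vvt{H^{-1}} \le \tfrac{1}{\mu+\lambda_{\min}\sigma}$ from strong convexity, and the same use of the $L$-Lipschitzness of $f$ itself to bound $\vvt{\nabla f(x_2)}$ globally. If anything, your use of the resolvent identity $A^{-1}-B^{-1}=A^{-1}(B-A)B^{-1}$ is stated more cleanly than the paper's corresponding step, and your explicit continuity-in-$\lambda$ argument fills in a detail the paper leaves implicit.
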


\section{Discretization and Complexity Analysis}\label{sec:discretizations}
	
    In this section, we present algorithms to compute an approximate solution path based on discretizations of \cref{eq:reg-dynamic-2}, along with the corresponding complexity analysis. The primary error metric we consider is the $2$-norm of the gradient across the entire interval $[\lambda_{\min}, \lambda_{\max}]$ as formally presented in \cref{def:accuracy}. 
    \begin{definition}\label{def:accuracy}
		An approximate solution path $\hat{x}(\cdot) : [\lambda_{\min}, \lambda_{\max}] \to \bbR^p$ to the parametric optimization problem \cref{eq:param-opt-1} has accuracy $\epsilon \geq 0$ if $\vvt{\nabla F_{\lambda}(\hat{x}(\lambda))} \le \epsilon$ for all $\lambda \in [\lambda_{\min}, \lambda_{\max}]$. 
	\end{definition}
    Notice that the strong convexity of the objective function $F_\lambda(\cdot)$ for all $\lambda > 0$ immediately implies that an $\epsilon$-accurate solution path $\hat{x}(\cdot)$ also has the optimality gap guarantee, which is $F_{\lambda}(\hat{x}(\lambda)) - F_\lambda^\ast \leq \frac{\epsilon^2}{2(\mu + \lambda \sigma)} \leq \frac{\epsilon^2}{2(\mu + \lambda_{\min} \sigma)}$, 
    for all $\lambda \in [\lambda_{\min}, \lambda_{\max}]$. 
    \cref{alg:meta} below presents a two-step ``meta-algorithm'' for computing an approximate solution path $\hat{x}(\cdot)$. 
    Inspired by numerical methods to solve ordinary differential equations, we first design several update schemes to iteratively update $(x_k, \lambda_k)$ by exploiting the dynamics in \cref{eq:reg-dynamic-2}. We use the function $\psi(\cdot, \cdot) : \bbR^p \times [\lambda_{\min}, \lambda_{\max}] \to \bbR^p \times [\lambda_{\min}, \lambda_{\max}]$ to denote a generic update rule in the meta-algorithm below, and we consider several different specific examples herein. Then we apply an interpolation method $\mathcal{I}(\cdot)$ to resolve the previously computed sequence of points into an approximate path $\hat{x}(\cdot)$ over $[\lambda_{\min}, \lambda_{\max}]$.
    \begin{algorithm}
        \caption{Meta-algorithm for computing an approximate solution path $\hat{x}(\cdot)$}
		\label{alg:meta}
		\begin{algorithmic}[1]
		    \STATE{\textbf{Input:} initial point $x_0 \in \bbR^p$, total number of iterations $K \geq 1$, update rule $\psi(\cdot, \cdot)$, and interpolation method $\mathcal{I}(\cdot)$}
		    \STATE{Initialize regularization parameter $\lambda_0 \gets \lambda_{\max}$}
		    \FOR{$k = 0, \dots, K-1$}
		        \STATE{Update $(x_{k+1}, \lambda_{k+1}) \gets \psi(x_k, \lambda_k)$}
		    \ENDFOR
		    \RETURN{$\hat{x}(\cdot) \gets \mathcal{I}\lr{{\{(x_k, \lambda_k)\}}_{k=1}^K}$}
		\end{algorithmic}
    \end{algorithm}
    
    We develop oracle complexity results for different update schemes and interpolation methods in terms of the number of gradient computations, Hessian computations, and linear system solves required to compute an $\epsilon$-accurate approximate path. 
	In this section, we will stick to a simple version of \cref{alg:meta} based on the application of the semi-implicit Euler method and linear interpolation to specify the update rule $\psi(\cdot, \cdot)$ and the interpolation method $\mathcal{I}(\cdot)$. 
    In particular, the semi-implicit Euler discretization of \cref{eq:reg-dynamic-2} is
    \begin{equation}\label{eq:gen-euler1}
    	\lambda_{k+1} = \lambda_k + h \cdot \xi(\lambda_k), \quad x_{k+1} = x_k + h \cdot v(x_k, \lambda_{k+1}), 
	\end{equation}
	and the linear interpolation $\mathcal{I}_{\textnormal{linear}}(\cdot): {\{(x_k, \lambda_k)\}}_{k=0}^K \to \hat{x}(\cdot)$ is defined by $\hat{x}(\lambda) := \alpha x_k + (1-\alpha) x_{k+1}$ with $\alpha = \frac{\lambda-\lambda_{k+1}}{\lambda_k-\lambda_{k+1}}$ for all $\lambda \in [\lambda_{k+1}, \lambda_k]$ and $k \in \{0, \ldots, K - 1\}$. 
	
	\paragraph{The algorithm with the exponential decaying parameter sequence}
	Recall the update rule \cref{eq:gen-euler1}, we can see that the function $\xi(\cdot)$ (or equivalently, $\lambda(\cdot)$) is still to be determined. 
    In practical cases, the value of $\lambda$ usually decreases exponentially from $\lambda_{\max}$ to $\lambda_{\min}$. 
    This choice of penalty scale parameters $\lrrr{\lambda_k}$ arises in the solution path for linear models, see \citet{friedman2010regularization}, and the interior-point method, see \citet{renegar2001mathematical}. 
	Although our analysis holds for a broad class of $\lambda(\cdot)$, we first present the version with an exponentially decaying parameter sequence, namely $\lambda(t) = e^{-t} \lambda(0)$. 
	This specific version of \cref{alg:meta} is formally described in \cref{alg:euler}. 
	
	\begin{algorithm}
	    \caption{Euler method for computing an approximate solution path $\hat{x}(\cdot)$}
		\label{alg:euler}
		\begin{algorithmic}[1]
		    \STATE{\textbf{Input:} initial point $x_0 \in \bbR^p$, total number of iterations $K \geq 1$}
		    \STATE{Initialize regularization parameter $\lambda_0 \gets \lambda_{\max}$, set step-size $h \gets 1 - (\frac{\lambda_{\min}}{\lambda_{\max}})^{\frac{1}{K}}$}
		    \FOR{$k = 0, \dots, K-1$}{
		        \STATE{$\lambda_{k+1} \gets \lambda_k - h \cdot \lambda_k$}
		        \STATE{$x_{k+1} \gets x_k - h \lr{\nabla^2 f(x_k) + \lambda_{k+1} \nabla^2 \Omega(x_k)}^{-1} \nabla f(x_k)$}
		    }
		    \ENDFOR
		    \RETURN{$\hat{x}(\cdot) \gets \mathcal{I}_{\textnormal{linear}}\lr{{\{(x_k, \lambda_k)\}}_{k=1}^K}$ according to linear interpolation}
		\end{algorithmic}
	\end{algorithm}
    
	Before going further into detailed analysis, we first state the computational guarantee of \cref{alg:euler}. 
    In our complexity analysis, we make the following smoothness assumptions for $f(\cdot)$ and $\Omega(\cdot)$.
    
    \begin{assumption}\label{as:lips-hess}
		In addition to $\mu$-strong convexity of $f(\cdot)$ and $\sigma$-strong convexity of $\Omega(\cdot)$, these functions have $L$-Lipschitz gradients and Hessians, where $L > 0$ is an upper bound on the four relevant Lipschitz constants. 
		Furthermore, we assume that $f^\ast := \min_x f(x) > -\infty$ and that $G > 0$ is an upper bound on the norm of the gradients of $f(\cdot)$ and $\Omega(\cdot)$ on the level set $\{x \in \bbR^p : f(x) \leq f(x_0)\}$.
	\end{assumption}
	
	\Cref{thm:err-ctrl-exp} is our main result concerning the complexity of \cref{alg:euler} and demonstrates that in terms of the accuracy parameter $\epsilon$, \cref{alg:euler} requires $\mathcal{O}(1/\epsilon)$ iterations to compute an $\epsilon$-accurate solution path.
	
	\begin{theorem}\label{thm:err-ctrl-exp}
		Suppose that \cref{as:lips-hess} holds, let $\epsilon > 0$ be the desired accuracy, and suppose that the initial point $x_0$ satisfies $\vvt{\nabla F_{\lambda_{\max}}(x_0)} \le \frac{\epsilon}{4}$. Let $T := \log(\lambda_{\max}/\lambda_{\min})$, let $\tau = \max \{ \frac{1 + \lambda_{\min}}{\mu + \lambda_{\min} \sigma}, \frac{1 + \lambda_{\max}}{\mu + \lambda_{\max} \sigma} \} $, and let
		\begin{equation}\label{eq:err-ctrl-exp}
			K_{\tn{E}} := \left\lceil \max \lrrr{2T, \frac{\sqrt{LG} \tau T}{\sqrt{3} }, \frac{4 (f(x_0) - f^\ast) \tau L T}{\epsilon}, \frac{2 \sqrt{L} (\tau G + 1) T}{\sqrt{\epsilon}}} \right\rceil. 
		\end{equation}
		If the total number of iterations $K$ satisfies $K \geq K_{\tn{E}}$, then \cref{alg:euler} returns an $\epsilon$-accurate solution path.
	\end{theorem}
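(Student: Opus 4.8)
The plan is to track two sources of error separately: the residual $g_k := \nabla F_{\lambda_k}(x_k) = \nabla f(x_k) + \lambda_k \nabla\Omega(x_k)$ at the grid points produced by the semi-implicit step, and the additional error incurred by linear interpolation between consecutive grid points. Writing $H_k := \nabla^2 f(x_k) + \lambda_{k+1}\nabla^2\Omega(x_k)$ and $\Delta_k := x_{k+1}-x_k = -h H_k^{-1}\nabla f(x_k)$, the first thing I would do is exploit the \emph{semi-implicit} structure of the update. Expanding $\nabla f$ and $\nabla\Omega$ to first order at $x_k$, using $H_k\Delta_k = -h\nabla f(x_k)$ and the fact that $\lambda_{k+1}=(1-h)\lambda_k$, one gets the exact identity $g_{k+1} = (1-h)g_k + r_k$, where the remainder $r_k$ collects the second-order Taylor terms and satisfies $\|r_k\| \le \tfrac{L}{2}(1+\lambda_{k+1})\|\Delta_k\|^2$ by the Lipschitz-Hessian part of \cref{as:lips-hess}. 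This is the crucial observation: the scheme is engineered so that the linearized residual contracts by exactly $(1-h)$ per step and all discretization error is second order in the step length $\|\Delta_k\|$.

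The second ingredient is a descent/level-set argument. I would show by induction that the iterates stay in $\{x : f(x)\le f(x_0)\}$, so that $\|\nabla f(x_k)\|\le G$ and $\|\nabla\Omega(x_k)\|\le G$ throughout, whence $\|\Delta_k\| \le h\,\|\nabla f(x_k)\|/(\mu+\lambda_{k+1}\sigma) \le \tau G h$ after noting that $\tfrac{1+\lambda}{\mu+\lambda\sigma}$ is monotone in $\lambda$ and hence bounded by $\tau$ on $[\lambda_{\min},\lambda_{\max}]$. The $\epsilon$-independent floors on $K$ (the terms $2T$ and $\sqrt{LG}\,\tau T/\sqrt{3}$) enter exactly here: they force $h$, and therefore $\|\Delta_k\|\le \tau G T/K$, below fixed thresholds of order $1/2$ and $\sqrt{G/L}$ that are needed both for the $h\le\tfrac12$-type simplifications and to guarantee the descent $f(x_{k+1})\le f(x_k)$. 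The $L$-smoothness of $f$ gives $f(x_{k+1}) \le f(x_k) - hq_k\bigl(1-\tfrac{Lh}{2(\mu+\lambda_{k+1}\sigma)}\bigr)$ with $q_k := \nabla f(x_k)^{\top} H_k^{-1}\nabla f(x_k)\ge 0$, so once $h$ is small the objective is nonincreasing (closing the induction) and $\sum_k h q_k \le 2(f(x_0)-f^\ast)$. Combining $\|\Delta_k\|^2 \le h^2 q_k/(\mu+\lambda_{k+1}\sigma)$ with $\tfrac{1+\lambda_{k+1}}{\mu+\lambda_{k+1}\sigma}\le \tau$ makes the remainders telescope: $\sum_k \|r_k\| \le \tfrac{L\tau}{2}\sum_k h^2 q_k \le L\tau h\,(f(x_0)-f^\ast)$.

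Given these pieces, the grid bound is immediate. Unrolling the recursion $g_{j} = (1-h)^{j} g_0 + \sum_{k<j}(1-h)^{j-1-k} r_k$ and using $(1-h)^{j-1-k}\le 1$ yields $\|g_j\| \le \|g_0\| + \sum_k\|r_k\| \le \tfrac{\epsilon}{4} + L\tau h\,(f(x_0)-f^\ast)$. Since $h = 1-e^{-T/K}\le T/K$, the $1/\epsilon$ term in $K_{\tn{E}}$ drives the second summand below $\epsilon/4$, so every grid residual satisfies $\|g_k\|\le \epsilon/2$. I would stress that the descent/telescoping step is what matters for the stated constant: a naive bound $\sum_k\|r_k\|\le h^{-1}\max_k\|r_k\|$ is also $O(h)$ but carries an extra factor of order $\tau^2 G^2$, which the telescoping replaces by the much smaller suboptimality gap $f(x_0)-f^\ast$.

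Finally, for $\lambda\in[\lambda_{k+1},\lambda_k]$ with $\hat x(\lambda)=\alpha x_k+(1-\alpha)x_{k+1}$, the same $\alpha$ gives $\lambda = \alpha\lambda_k+(1-\alpha)\lambda_{k+1}$, so I would compare $\nabla F_\lambda(\hat x(\lambda))$ with the convex combination $\alpha g_k+(1-\alpha)g_{k+1}$ by a second-order Taylor expansion at $\hat x(\lambda)$. The $\nabla f$-terms and the $\nabla\Omega$-terms each leave remainders of order $\tfrac{L}{2}\|\Delta_k\|^2$, and the mismatch between $\lambda_k$ and $\lambda_{k+1}$ in the $\Omega$-part contributes a single term $\alpha(1-\alpha)h\lambda_k\nabla^2\Omega(\hat x(\lambda))\Delta_k$ which, after inserting $\|\Delta_k\|\le \tau G h$ and $\|\nabla^2\Omega\|\le L$, is again $O(h^2)$. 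The total interpolation error is then bounded by a constant multiple of $L(\tau G+1)^2 h^2$, and the $1/\sqrt{\epsilon}$ term in $K_{\tn{E}}$ forces this below $\epsilon/2$; adding the grid bound gives $\|\nabla F_\lambda(\hat x(\lambda))\|\le \epsilon$ for all $\lambda$. I expect the main obstacle to be the bookkeeping in the second paragraph, namely simultaneously closing the level-set induction (on which the per-step estimates depend) and extracting the telescoping sum with the sharp $\tau$ constant, rather than the interpolation estimate, which is a mechanical if lengthy Taylor computation.
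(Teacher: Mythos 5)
Your proposal follows the same architecture as the paper's proof: your exact recursion $g_{k+1}=(1-h)g_k+r_k$ with $\|r_k\|\le\tfrac{L}{2}(1+\lambda_{k+1})\|\Delta_k\|^2$ is \cref{lm:gen-err-2}, your level-set induction plus telescoping against $f(x_0)-f^\ast$ is \cref{lm:global-rk}/\cref{prop:global-rk}, and your interpolation estimate is \cref{thm:gen-int-reg}/\cref{prop:gen-int-reg-exp}. However, there is a genuine gap in your descent step. You derive descent from $L$-smoothness of $f$, obtaining $f(x_{k+1})\le f(x_k)-hq_k\bigl(1-\tfrac{Lh}{2(\mu+\lambda_{k+1}\sigma)}\bigr)$, and you claim that the $\epsilon$-independent floors $K\ge 2T$ and $K\ge \sqrt{LG}\,\tau T/\sqrt{3}$ suffice to make the bracket at least $1/2$ (which your telescoping $\sum_k hq_k\le 2(f(x_0)-f^\ast)$ also requires). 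That needs $h\lesssim(\mu+\lambda_{\min}\sigma)/L$, whereas the floors only give $h\le\min\{1/2,\ \sqrt{3}/(\tau\sqrt{LG})\}$. These are incomparable: since $\tau(\mu+\lambda_{\min}\sigma)\ge 1+\lambda_{\min}$, the implication $\sqrt{3}/(\tau\sqrt{LG})\le(\mu+\lambda_{\min}\sigma)/L$ essentially forces $L\lesssim G$ (up to factors of $1+\lambda_{\min}$), so whenever $L\gg G$ — perfectly consistent with \cref{as:lips-hess} — and $\epsilon$ is not small (the theorem is asserted for every $\epsilon>0$, so you cannot lean on the $1/\epsilon$ term to shrink $h$), your bracket can be negative and the induction does not close. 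The conclusion $f(x_{k+1})\le f(x_k)$ is in fact true under the stated hypotheses, but your justification for it is insufficient.

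The paper closes exactly this point by never invoking the quadratic upper bound with constant $L$: it uses the third-order expansion from \cref{lm:tay-exp}, and bounds the quadratic term by $d_k^T\nabla^2 f(x_k)\,d_k\le d_k^T H_k d_k=q_k$, i.e., the curvature of $f$ is dominated by the very matrix $H_k$ that defines the step. The quadratic term is then absorbed by the linear term $-hq_k$ under the condition $h\le 1/2$ alone, and only the cubic remainder $\tfrac{h^3L}{6}\|d_k\|^3$ needs a step-size restriction, namely $h^2\le 3(\mu+\lambda_{k+1}\sigma)^2/(LG)$ — which is precisely what the floor $\sqrt{LG}\,\tau T/\sqrt{3}$ delivers (via \cref{eq:global-h-2} implying \cref{eq:global-h}). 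Your proof is repaired by replacing your smoothness-based descent inequality with this argument; everything else you wrote (the $q_k$ bookkeeping, the unrolling of the recursion, the interpolation bound and the final budget split) then goes through essentially as in the paper.
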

	
	\begin{remark}\label{rmk:grid-search}
	    Grid search type methods for computing approximate solution paths are proposed in \citet{giesen2012approximating, ndiaye2019safe}, and we will follow the analysis of the latter, which considers more general cases. 
	    To ensure that the function value gap satisfies $h(x) - h^* \le \epsilon'$, we require the number of grid points $K = \frac{\sqrt{\tau} G T}{\sqrt{\epsilon'}}$. 
	    For $L$-smooth function $h(\cdot)$, $h(x) - h^* \le \frac{\epsilon^2}{2L}$ implies $\vvta{h(x)} \le \epsilon$, which is the goal in our paper. 
	    Therefore, we need to set $\epsilon' = \frac{\epsilon^2}{2L}$, and hence we have the number of grid points is $K = \frac{\sqrt{\tau L} G T}{\epsilon}$ when the desired accuracy of the inner problem is set to $\epsilon_c = \frac{\epsilon'}{2}$. 
	    Using the exact Newton method and the last grid point as a warm start to solve the inner problem, \citet{karimireddy2018global} implies that the inner complexity is $(\tau L)^2 \log 2$, and therefore the total complexity is $\frac{(\tau L)^{5/2} G T \log 2}{\epsilon}$. 
	\end{remark}
	
	From the results in \cref{thm:err-ctrl-exp,rmk:grid-search}, we can see that both our results and the grid search method have a complexity of order $\mathcal{O}(\frac{1}{\epsilon})$. 
	In most practical cases, $f(x_0) - f^*$ is smaller than $(\tau L)^{3/2} G \log 2$, which implies that the constant in \cref{eq:err-ctrl-exp} is better. 
	In \Cref{sec:multi}, we will propose algorithms utilizing the smoothness of $f(\cdot)$ and $\Omega(\cdot)$ and achieve better complexity results. 
	
	\subsection{Semi-Implicit Euler Update Scheme}
	Herein we provide the computational guarantee of the semi-implicit Euler update scheme defined in \cref{eq:gen-euler1}, where the main object we concern is the accuracy at each grid point. 
	Let $r_k := \vvt{\nabla F_{\lambda_k}(x_k)}$ denote the accuracy at $(x_k, \lambda_k)$. 
	We consider a broad family of $\xi(\cdot)$ in the following analysis, although in \cref{alg:euler} and the corresponding complexity analysis in \cref{thm:err-ctrl-exp}, we only consider the special scenario that $\lambda(t) = e^{-t} \lambda(0)$. 
	We first present the computation guarantees of Taylor expansion approximations given the Lipschitz continuity of Hessian in \cref{lm:tay-exp}.  

	\begin{lemma}[Lemma 1 in \citet{nesterov2006cubic}]\label{lm:tay-exp}
	    Suppose $\phi(\cdot): S \to \bbR$ has $L$-Lipschitz Hessian for some convex set $S \subseteq \bbR^d$, then the following inequalities hold for all $x, y \in S$: 
		\begin{enumerate}[(i)]
	        \item \label{it:tay-exp-1} $\vvt{\nabla \phi(y) - \nabla \phi(x) - \nabla^2 \phi(x) (y-x)} \le \frac{L}{2} \vvt{y-x}^2$. 
	        \item \label{it:tay-exp-2} $\vvt{\phi(y) - \phi(x) - \nabla \phi(x)^T (y-x) - \frac12 (y-x)^T \nabla^2 \phi(x) (y-x)} \le \frac{L}{6} \vvt{y-x}^3$. 
		\end{enumerate}
	\end{lemma}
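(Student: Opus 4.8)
The plan is to prove both bounds by representing the relevant quantities as integrals over the line segment from $x$ to $y$ and then exploiting the $L$-Lipschitz continuity of $\nabla^2 \phi(\cdot)$. Since $S$ is convex, the segment $\{x + t(y-x) : t \in [0,1]\}$ lies entirely in $S$, so all the integrands below are well-defined and the matrix/gradient values they reference obey the stated Lipschitz estimates.

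For part~(i), I would start from the fundamental theorem of calculus applied to $t \mapsto \nabla \phi(x + t(y-x))$, which gives $\nabla \phi(y) - \nabla \phi(x) = \int_0^1 \nabla^2 \phi(x + t(y-x))(y-x)\,\dd t$. Subtracting the trivial identity $\nabla^2 \phi(x)(y-x) = \int_0^1 \nabla^2 \phi(x)(y-x)\,\dd t$ yields $\nabla \phi(y) - \nabla \phi(x) - \nabla^2 \phi(x)(y-x) = \int_0^1 \lrr{\nabla^2 \phi(x + t(y-x)) - \nabla^2 \phi(x)}(y-x)\,\dd t$. Moving the norm inside the integral, bounding the matrix-vector product by the operator norm (as defined in the Notation) times $\vvt{y-x}$, and then applying $\vvt{\nabla^2 \phi(x + t(y-x)) - \nabla^2 \phi(x)} \le L t \vvt{y-x}$ produces an integrand equal to $L t \vvt{y-x}^2$; since $\int_0^1 t\,\dd t = \tfrac12$, the claimed bound $\tfrac{L}{2}\vvt{y-x}^2$ follows.

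For part~(ii), the idea is to reduce to part~(i) rather than differentiate again. Writing the scalar increment as $\phi(y) - \phi(x) = \int_0^1 \IP{\nabla \phi(x + t(y-x))}{y-x}\,\dd t$ and using $\int_0^1 1\,\dd t = 1$ together with $\int_0^1 t\,\dd t = \tfrac12$ to express the first- and second-order Taylor terms as $\int_0^1 \IP{\nabla \phi(x) + t \nabla^2 \phi(x)(y-x)}{y-x}\,\dd t$, the residual on the left-hand side becomes $\int_0^1 \IP{\nabla \phi(x + t(y-x)) - \nabla \phi(x) - t \nabla^2 \phi(x)(y-x)}{y-x}\,\dd t$. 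Applying part~(i) to the pair $(x,\, x + t(y-x))$, whose displacement is $t(y-x)$, bounds the norm of the first argument of the inner product by $\tfrac{L}{2} t^2 \vvt{y-x}^2$; Cauchy--Schwarz then makes the integrand at most $\tfrac{L}{2} t^2 \vvt{y-x}^3$, and $\int_0^1 t^2\,\dd t = \tfrac13$ delivers the stated $\tfrac{L}{6}\vvt{y-x}^3$.

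I do not anticipate a genuine obstacle, as this is a classical estimate; the only points requiring care are confirming that the line segment remains in $S$ (handled by convexity) and correctly passing from a matrix-vector product to the product of the operator norm and the vector norm in part~(i), which is exactly the operator-norm inequality recorded in the Notation. Once part~(i) is established, part~(ii) is essentially bookkeeping.
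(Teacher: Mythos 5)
Your proof is correct. The paper itself does not prove this lemma---it imports it by citation as Lemma 1 of \citet{nesterov2006cubic}---and your argument is precisely the classical one found in that reference: the fundamental-theorem-of-calculus representation of the gradient increment plus the Lipschitz bound on the Hessian difference for part~(i), and a reduction of the function-value bound to part~(i) via a line integral and Cauchy--Schwarz for part~(ii) (Nesterov's own write-up uses an equivalent iterated-integral bookkeeping, a cosmetic difference only). Both steps, including the operator-norm estimate and the convexity of $S$ keeping the segment inside the domain, are handled correctly, so nothing is missing.
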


	Based on the results in \cref{lm:tay-exp}, we provide the local analysis of $r_k$, that is, how the norm of the gradient at each grid point accumulates. 
	\cref{lm:gen-err-2} provides an upper bound on $\vvt{r_{k+1}}$ based on $\vvt{r_k}$, which represents the accuracy in the previous iteration. 

	\begin{lemma}\label{lm:gen-err-2}
		Suppose \cref{as:lips-hess} holds, discretization \cref{eq:gen-euler1} has the following guarantee for all $k \ge 0$: 
		\begin{equation*}
		    r_{k+1} \le \frac{\lambda_{k+1}}{\lambda_k} \cdot r_k + h^2 \cdot \frac{L(1+\lambda_{k+1})}{2} \vvt{v(x_k, \lambda_{k+1})}^2. 
		\end{equation*}
	\end{lemma}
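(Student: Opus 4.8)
The plan is to estimate $r_{k+1} = \vvt{\nabla F_{\lambda_{k+1}}(x_{k+1})}$ directly, by expanding $\nabla F_{\lambda_{k+1}}$ about the previous iterate $x_k$ and splitting the result into a \emph{main term} that contracts by the factor $\lambda_{k+1}/\lambda_k$ and a \emph{remainder} controlled by the Lipschitz continuity of the two Hessians. Writing $\nabla F_{\lambda_{k+1}}(x_{k+1}) = \nabla f(x_{k+1}) + \lambda_{k+1}\nabla\Omega(x_{k+1})$ and setting $d_k := x_{k+1}-x_k = h\,v(x_k,\lambda_{k+1})$ and $M_k := \nabla^2 f(x_k) + \lambda_{k+1}\nabla^2\Omega(x_k)$, I would first apply \cref{lm:tay-exp}(i) separately to $f$ and to $\Omega$ (both have $L$-Lipschitz Hessians under \cref{as:lips-hess}) to obtain $\nabla f(x_{k+1}) = \nabla f(x_k) + \nabla^2 f(x_k) d_k + e_1$ and $\nabla\Omega(x_{k+1}) = \nabla\Omega(x_k) + \nabla^2\Omega(x_k) d_k + e_2$, where $\vvt{e_1}, \vvt{e_2} \le \frac{L}{2}\vvt{d_k}^2$. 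Substituting and regrouping gives $\nabla F_{\lambda_{k+1}}(x_{k+1}) = \lr{\nabla f(x_k) + \lambda_{k+1}\nabla\Omega(x_k) + M_k d_k} + \lr{e_1 + \lambda_{k+1} e_2}$.

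The crux is to show that the first grouped term equals $\frac{\lambda_{k+1}}{\lambda_k}\nabla F_{\lambda_k}(x_k)$. This relies on the defining property of the semi-implicit update, namely that $d_k$ solves the linear system $M_k d_k = \frac{\lambda_{k+1}-\lambda_k}{\lambda_k}\nabla f(x_k)$; indeed, by the definition of $v$ in \cref{eq:reg-dynamic-2} one has $M_k d_k = h\frac{\xi(\lambda_{k+1})}{\lambda_{k+1}}\nabla f(x_k)$, which reduces to the stated right-hand side for the schedule of \cref{alg:euler} (where $\lambda_{k+1} = (1-h)\lambda_k$ and $\xi(\lambda)/\lambda \equiv -1$). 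Given this identity, the first term becomes $\nabla f(x_k) + \lambda_{k+1}\nabla\Omega(x_k) + \lr{\frac{\lambda_{k+1}}{\lambda_k}-1}\nabla f(x_k) = \frac{\lambda_{k+1}}{\lambda_k}\nabla f(x_k) + \lambda_{k+1}\nabla\Omega(x_k) = \frac{\lambda_{k+1}}{\lambda_k}\lr{\nabla f(x_k) + \lambda_k\nabla\Omega(x_k)} = \frac{\lambda_{k+1}}{\lambda_k}\nabla F_{\lambda_k}(x_k)$, whose norm is exactly $\frac{\lambda_{k+1}}{\lambda_k}r_k$.

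For the remainder, the triangle inequality together with $\lambda_{k+1} > 0$ and the bounds on $e_1, e_2$ yields $\vvt{e_1 + \lambda_{k+1} e_2} \le \frac{L}{2}\vvt{d_k}^2 + \lambda_{k+1}\frac{L}{2}\vvt{d_k}^2 = \frac{L(1+\lambda_{k+1})}{2}\vvt{d_k}^2$, and substituting $d_k = h\,v(x_k,\lambda_{k+1})$ turns this into $h^2\frac{L(1+\lambda_{k+1})}{2}\vvt{v(x_k,\lambda_{k+1})}^2$. One final application of the triangle inequality to the two grouped terms then gives the claimed bound. I expect the main obstacle to be the exact algebraic cancellation in the main term: one must verify carefully that the semi-implicit discretization \cref{eq:gen-euler1} is precisely the scheme producing the contraction factor $\lambda_{k+1}/\lambda_k$ — that is, that the coefficient $\xi(\lambda)/\lambda$ is evaluated consistently with $\lambda_{k+1} = \lambda_k + h\xi(\lambda_k)$ so that $M_k d_k = \frac{\lambda_{k+1}-\lambda_k}{\lambda_k}\nabla f(x_k)$. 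Everything beyond this identity is a routine application of \cref{lm:tay-exp} and the triangle inequality.
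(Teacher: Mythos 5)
Your proposal is correct and is essentially the paper's own proof: both arguments expand $\nabla f$ and $\nabla \Omega$ around $x_k$ using \cref{lm:tay-exp}\eqref{it:tay-exp-1}, observe that the linearized part $\nabla f(x_k) + \lambda_{k+1}\nabla\Omega(x_k) + \lr{\nabla^2 f(x_k) + \lambda_{k+1}\nabla^2\Omega(x_k)}(x_{k+1}-x_k)$ collapses exactly to $\tfrac{\lambda_{k+1}}{\lambda_k}\nabla F_{\lambda_k}(x_k)$, and bound the two Taylor remainders by $\tfrac{L(1+\lambda_{k+1})}{2}\vvt{x_{k+1}-x_k}^2$ via the triangle inequality. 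The consistency point you flag is genuine -- the cancellation requires $\xi(\lambda_{k+1})/\lambda_{k+1} = \xi(\lambda_k)/\lambda_k$, which holds for the schedule $\xi(\lambda) = -\lambda$ used in \cref{alg:euler} and \cref{thm:err-ctrl-exp} -- and the paper's proof silently asserts the same identity (``for the third terms, they are equal to each other''), so your treatment of that step is, if anything, slightly more careful.
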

	\begin{proof}
	    It holds that 
		\begin{align*}
			r_{k+1} &= \vvt{\nabla f(x_{k+1}) + \lambda_{k+1} \nabla \Omega(x_{k+1})} \\
		   	& \le \vvt{\nabla f(x_{k+1}) - \nabla f(x_k) - \nabla^2 f(x_k) (x_{k+1} - x_k)} \\ 
			& \phantom{\le} + \vvt{\lambda_{k+1} \lr{\nabla \Omega(x_{k+1}) - \nabla \Omega(x_k) - \nabla^2 \Omega(x_k) (x_{k+1} - x_k)}} \\
			& \phantom{\le} + \vvt{\lambda_{k+1} \lr{\nabla \Omega(x_k) + \nabla^2 \Omega(x_k) (x_{k+1}-x_k)} + \nabla f(x_k) + \nabla^2 f(x_k) (x_{k+1}-x_k)} \\
			& \le \frac{L}{2} \vvt{x_{k+1} - x_k}^2 + \frac{\lambda_{k+1} L}{2} \vvt{x_{k+1} - x_k}^2 + \frac{\lambda_{k+1}}{\lambda_k} \vvt{\nabla f(x_k) + \lambda_k \nabla \Omega(x_k)}\\
			& = \frac{\lambda_{k+1}}{\lambda_k} \cdot r_k + h^2 \cdot \frac{L(1+\lambda_{k+1})}{2} \vvt{v(x_k, \lambda_{k+1})}^2, 
		\end{align*}
		where the first inequality is true due to the triangle inequality, and in the second inequality, for the first two terms, we apply \cref{it:tay-exp-1} in \cref{lm:tay-exp}, and for the third terms, they are equal to each other. 
	\end{proof}

    \Cref{lm:gen-err-2} provides the first technical result of the semi-implicit Euler update scheme. 
    When \cref{as:lips-hess} holds, \cref{eq:gen-euler1} has the computation guarantee $\tfrac{r_{j+1}}{\lambda_{j+1}} \le \tfrac{r_j}{\lambda_j} + \mathcal{O}(h^2)$. 
    After telescoping the inequalities for $j = 0, 1, \dots, k-1$ for $k \le K = T/h$ we have $r_k \sim \mathcal{O}(h)$. 
	Therefore, one would expect a uniform $\mathcal{O}(h)$ bound on all accuracy $r_k$ at points $x_k$ for all $k = 0, \dots, K$. 
	We formalize the idea in the following lemma. 

	\begin{lemma}\label{lm:global-rk}
	    Suppose \cref{as:lips-hess} holds, $\lambda_{k+1} \ge \lambda_{\min}$, $\xi(\lambda_j) < 0$ for all $j \le k$, and step-size $h$ satisfies the following condition for all $0 \le j < k$: 
		\begin{equation}\label{eq:global-h}
			h \le \min \lrrr{\frac{\lambda_j}{-2\xi(\lambda_j)}, \sqrt{\frac{3\lambda_j(\mu+\lambda_{j+1}\sigma)^2}{-\xi(\lambda_j) L G}}}.  
		\end{equation}
		Then the sequence $\{(x_k, \lambda_k)\}_{k=0}^K$ generated by update scheme \cref{eq:gen-euler1} satisfies $f(x_{k+1}) \le f(x_k)$, and the corresponding accuracy $\{r_k\}_{j=0}^K$ has the following guarantee: 
		\begin{equation}\label{eq:global-param}
			\frac{r_k}{\lambda_k} \le \frac{r_0}{\lambda_0} + 2 h L (f(x_0) - f(x_k)) \cdot \max_{j \in [k-1]} \lrrr{\frac{- (1 + \lambda_{j+1}) \xi(\lambda_j)}{\lambda_j \lambda_{j+1} (\mu + \lambda_{j+1} \sigma)}}. 
		\end{equation}
	\end{lemma}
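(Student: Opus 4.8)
The plan is to argue by induction on the iteration index $j$, simultaneously establishing the monotone decrease $f(x_{j+1}) \le f(x_j)$ and the accumulated estimate \cref{eq:global-param} up to index $j$. The two statements are coupled and must be carried together: the descent property is what keeps every iterate in the sublevel set $\{x : f(x) \le f(x_0)\}$, and membership in this set is precisely what licenses the uniform gradient bound $\vvt{\nabla f(x_j)} \le G$ from \cref{as:lips-hess}; that gradient bound is in turn what I will use to control the step and prove the next decrease. So the base case is immediate and the work is entirely in the inductive step.

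First I would record the elementary consequences of the step-size conditions \cref{eq:global-h}. Writing $H_{j+1} := \nabla^2 f(x_j) + \lambda_{j+1}\nabla^2\Omega(x_j)$, the strong convexity of $f$ and $\Omega$ gives $(\mu + \lambda_{j+1}\sigma)I \preceq H_{j+1} \preceq L(1+\lambda_{j+1})I$, so $H_{j+1} \succ 0$ and $v(x_j,\lambda_{j+1})$ is well defined; since $\xi(\cdot) < 0$ along the iterates and $\lambda_{j+1} > 0$, it is a descent direction for $f$, i.e.\ $\IP{\nabla f(x_j)}{v(x_j,\lambda_{j+1})} \le 0$. The first bound in \cref{eq:global-h}, $h \le \lambda_j/(-2\xi(\lambda_j))$, yields $\lambda_{j+1} = \lambda_j + h\xi(\lambda_j) \ge \tfrac12\lambda_j > 0$; besides keeping $\lambda_{j+1}\ge\lambda_{\min}$ in play, this bounds the ratio $\lambda_j/\lambda_{j+1} \le 2$, which is what converts the factor $\lambda_{j+1}$ naturally arising in the descent estimate into the factor $\lambda_j$ appearing in \cref{eq:global-param}.

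Next I would prove the key per-step inequality. Applying $L$-smoothness of $f$ (the descent lemma, which follows from \cref{lm:tay-exp}) to $x_{j+1} = x_j + h\,v(x_j,\lambda_{j+1})$ gives $f(x_{j+1}) \le f(x_j) + h\IP{\nabla f(x_j)}{v(x_j,\lambda_{j+1})} + \tfrac{Lh^2}{2}\vvt{v(x_j,\lambda_{j+1})}^2$. I would lower-bound the first-order term using $H_{j+1} \succeq (\mu+\lambda_{j+1}\sigma)I$ and upper-bound $\vvt{v(x_j,\lambda_{j+1})}$ by a multiple of $G/(\mu+\lambda_{j+1}\sigma)$ via $\vvt{\nabla f(x_j)}\le G$. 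The second bound in \cref{eq:global-h}, the one carrying $G$ and the square root, is calibrated so that the second-order term is dominated by a constant fraction of the first-order descent; this simultaneously closes the induction on monotonicity and yields the quantitative decrease $f(x_j) - f(x_{j+1}) \ge \tfrac{h}{4}\cdot\tfrac{\lambda_j(\mu+\lambda_{j+1}\sigma)}{-\xi(\lambda_j)}\vvt{v(x_j,\lambda_{j+1})}^2$, which charges $h\vvt{v(x_j,\lambda_{j+1})}^2$ against $f(x_j)-f(x_{j+1})$. Dividing the conclusion of \cref{lm:gen-err-2} by $\lambda_{j+1}$ gives $\tfrac{r_{j+1}}{\lambda_{j+1}} - \tfrac{r_j}{\lambda_j} \le h^2\tfrac{L(1+\lambda_{j+1})}{2\lambda_{j+1}}\vvt{v(x_j,\lambda_{j+1})}^2$, and substituting the charging bound collapses the right-hand side exactly into $2hL\,M_j\,(f(x_j)-f(x_{j+1}))$ with $M_j = \tfrac{-(1+\lambda_{j+1})\xi(\lambda_j)}{\lambda_j\lambda_{j+1}(\mu+\lambda_{j+1}\sigma)}$. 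Summing over $j = 0,\dots,k-1$, pulling out $\max_{j\in[k-1]} M_j$, and using that all decrements are nonnegative so that $\sum_j (f(x_j)-f(x_{j+1})) = f(x_0)-f(x_k)$, produces exactly \cref{eq:global-param}.

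The main obstacle I anticipate is the simultaneous induction of the third paragraph: the quantitative descent must be proved with the exact constant that makes the per-step charge reduce to $2hL M_j$, and this requires threading the gradient bound $G$ (available only because of monotonicity, which is being proved at the same time) and the extreme eigenvalues $(\mu+\lambda_{j+1}\sigma)I \preceq H_{j+1} \preceq L(1+\lambda_{j+1})I$ of the regularized Hessian through the descent lemma, while the index mismatch between $\xi(\lambda_j)$ in the step and $\lambda_{j+1}$ in the Hessian is absorbed by the ratio bound $\lambda_j/\lambda_{j+1}\le 2$ from the first step-size condition. Everything else is a telescoping bookkeeping that follows once this inequality is in hand.
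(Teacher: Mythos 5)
Your overall architecture matches the paper's: a simultaneous induction carrying the descent property (hence $x_j \in S_{x_0}$ and the gradient bound $G$) together with the accuracy estimate, then dividing \cref{lm:gen-err-2} by $\lambda_{j+1}$, charging $h\vvt{d_j}^2$ against $f(x_j)-f(x_{j+1})$, and telescoping while pulling out the max over the nonnegative decrements. The gap is in the one step you yourself flag as ``calibrated'': you establish the per-step decrease via the quadratic descent lemma, $f(x_{j+1}) \le f(x_j) + h\IP{\nabla f(x_j)}{d_j} + \tfrac{Lh^2}{2}\vvt{d_j}^2$, and claim the second condition in \cref{eq:global-h} makes the quadratic term a constant fraction of the first-order descent. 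It does not. Writing $c := -\xi(\lambda_j)/\lambda_j$, the first-order term gives $h\IP{\nabla f(x_j)}{d_j} \le -\tfrac{h}{c}(\mu+\lambda_{j+1}\sigma)\vvt{d_j}^2$, so dominating $\tfrac{Lh^2}{2}\vvt{d_j}^2$ by a fraction of it requires $h \lesssim \tfrac{\mu+\lambda_{j+1}\sigma}{Lc}$; but \cref{eq:global-h} only guarantees $h \lesssim \tfrac{\mu+\lambda_{j+1}\sigma}{\sqrt{cLG}}$, and the needed implication fails whenever $G$ is small relative to $Lc$ (a small $G$ makes the permitted step-size \emph{larger}, while the threshold your quadratic bound needs stays fixed). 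There is no way to repair this by inserting $\vvt{d_j}\le G/(\mu+\lambda_{j+1}\sigma)$ into the quadratic term, since that changes the power of $\vvt{d_j}$ and cannot be compared to the first-order term without a lower bound on $\vvt{d_j}$.

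The paper closes this step differently, and the difference is essential: it uses the cubic Taylor bound from the Lipschitz-\emph{Hessian} assumption (part (ii) of \cref{lm:tay-exp}), $f(x_{j+1}) \le f(x_j) + h\nabla f(x_j)^T d_j + \tfrac{h^2}{2}d_j^T\nabla^2 f(x_j)d_j + \tfrac{h^3 L}{6}\vvt{d_j}^3$, and absorbs the second-order term into the first-order term \emph{exactly}: since $d_j = -cH_{j+1}^{-1}\nabla f(x_j)$ with $H_{j+1} = \nabla^2 f(x_j)+\lambda_{j+1}\nabla^2\Omega(x_j)$, one has $d_j^T\nabla^2 f(x_j)d_j \le d_j^T H_{j+1} d_j = c\,\vt{\nabla f(x_j)^T d_j}$, so the second-order term equals the first-order term up to the relative factor $hc/2$, and the \emph{first} condition $h \le \tfrac{\lambda_j}{-2\xi(\lambda_j)}$ (i.e.\ $hc \le \tfrac12$) caps it at a quarter of the descent. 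This — not the ratio bound $\lambda_j/\lambda_{j+1}\le 2$ you assign to it — is that condition's actual role; no conversion between $\lambda_j$ and $\lambda_{j+1}$ is needed, as \cref{eq:global-param} retains both indices. The second condition in \cref{eq:global-h} is calibrated for the \emph{cubic} remainder only: combined with $\vvt{d_j}\le G/(\mu+\lambda_{j+1}\sigma)$ it yields $h^2 \le \tfrac{3(\mu+\lambda_{j+1}\sigma)}{cL\vvt{d_j}}$, hence $\tfrac{h^3L}{6}\vvt{d_j}^3 \le \tfrac{h}{2c}(\mu+\lambda_{j+1}\sigma)\vvt{d_j}^2$, and altogether $f(x_{j+1}) \le f(x_j) - \tfrac{h}{4c}(\mu+\lambda_{j+1}\sigma)\vvt{d_j}^2$, which is precisely the quantitative decrease your charging step and the telescoping afterward require. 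With this substitution your remaining bookkeeping goes through as written.
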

	\begin{proof}
	    We will use induction to show that $f(x_{j+1}) \le f(x_j)$, which implies that $x_k \in S_{x_0}$ for all $k \in \{0, \dots, K\}$. 
	    First, suppose $x_j \in S_{x_0}$ for some $j \in \{0, \dots, K-1\}$. 
		Let $d_j$ denote $v(x_j, \lambda_{j+1})$ and we have $\vvt{d_j} \le \frac{G}{\mu+\lambda_{j+1}\sigma}$. 
		Since $h + \frac{\lambda_k}{2\xi(\lambda_k)} \le 0$ and $h^2 \le \frac{3\lambda_j(\mu+\lambda_{j+1}\sigma)^2}{-\xi(\lambda_j) L G} \le \frac{3 \lambda_j (\mu + \lambda_{j+1} \sigma)}{-\xi(\lambda_k) L \vvt{d_k}}$, it holds that 
		\begin{align*}
		    f(x_{j+1}) & \le f(x_j) + h \cdot \nabla f(x_j)^T d_j + h^2 \cdot \frac12 d_j^T \nabla^2 f(x_j) d_j + h^3 \cdot \frac16 L \vvt{d_j}^3 \\
		    & \le f(x_j) + \frac{h}{4} \cdot \frac{\lambda_j}{\xi(\lambda_j)} \cdot (\mu+\lambda_{j+1}\sigma) \vvt{d_j}^2. 
		\end{align*}
		Since $\xi(\lambda_j) < 0$, it holds that $f(x_{j+1}) \le f(x_j)$ and therefore implies that $x_{j+1} \in S_{x_0}$. 
		Then by induction we conclude that $x_j \in S_{x_0}$ for all $j \le k$. 
		Also, combining the result in \cref{lm:gen-err-2}, for all $j \le k$, it holds that 
		\begin{equation*}
		    \frac{r_{j+1}}{\lambda_{j+1}} \le \frac{r_j}{\lambda_j} + h^2 \cdot \frac{L(1+\lambda_{j+1})}{2\lambda_{j+1}} \vvt{d_j}^2 
			\le \frac{r_j}{\lambda_j} + h \cdot \frac{L(1+\lambda_{j+1})}{2\lambda_{j+1}} \cdot \frac{4(f(x_j)-f(x_{j+1}))}{\frac{\lambda_j}{-\xi(\lambda_j)} \cdot (\mu+\lambda_{j+1}\sigma)}. 
		\end{equation*}
		By taking the summation over $j$ from $0$ to $k$, we obtain \cref{eq:global-param}. 
	\end{proof}

    \Cref{lm:global-rk} provides the computational guarantees of \cref{alg:euler} with any decreasing sequence of $\lrrr{\lambda_k}$. 
    Note that the upper bound of the accuracy at point $x_k$ still involves a constant related to the sequence $\lrrr{\lambda_k}$. 
    We then consider a family of the sequence $\lrrr{\lambda_k}$ and derive a simpler upper bound. 
	\begin{proposition}\label{prop:global-rk}
		Suppose \cref{as:lips-hess} holds, $\lambda_{k+1} \ge \lambda_{\min}$, $-\lambda \le \xi(\lambda) < 0$ for all $\lambda \in [\lambda_{\min}, \lambda_{\max}]$, and step-size $h$ satisfies 
		\begin{equation}\label{eq:global-h-2}
			h \le \min \lrrr{\frac12, \sqrt{\frac{3 }{ \tau^2 L G}}}. 
		\end{equation}
		Then under \cref{as:lips-hess}, discretization \cref{eq:gen-euler1}, it holds that
		\begin{equation}\label{eq:global-param-2}
			r_{k+1} \le r_0 + 2h \tau L (f(x_0) - f(x_{k+1})). 
		\end{equation}
	\end{proposition}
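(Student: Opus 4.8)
The plan is to derive \cref{prop:global-rk} as a clean specialization of \cref{lm:global-rk} under the additional structural assumption $-\lambda \le \xi(\lambda) < 0$. First I would verify that the hypotheses of \cref{lm:global-rk} are met, the only nontrivial one being the step-size condition \cref{eq:global-h}. Using $-\xi(\lambda_j) \le \lambda_j$, the first term satisfies $\frac{\lambda_j}{-2\xi(\lambda_j)} \ge \frac12$, and for the second term $\frac{3\lambda_j(\mu+\lambda_{j+1}\sigma)^2}{-\xi(\lambda_j)LG} \ge \frac{3(\mu+\lambda_{j+1}\sigma)^2}{LG}$. Since $\{\lambda_j\}$ is decreasing (as $\xi<0$) and starts at $\lambda_0 = \lambda_{\max}$ with $\lambda_{k+1}\ge\lambda_{\min}$, every $\lambda_{j+1}$ lies in $[\lambda_{\min},\lambda_{\max}]$, so $\mu+\lambda_{j+1}\sigma \ge \mu+\lambda_{\min}\sigma \ge \frac{\mu+\lambda_{\min}\sigma}{1+\lambda_{\min}} \ge 1/\tau$. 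Hence the second term is at least $\sqrt{3/(\tau^2 LG)}$, and the hypothesis $h \le \min\{\frac12, \sqrt{3/(\tau^2 LG)}\}$ from \cref{eq:global-h-2} indeed implies \cref{eq:global-h} uniformly in $j$.

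Next I would invoke \cref{lm:global-rk} with $k$ replaced by $k+1$, which yields $\frac{r_{k+1}}{\lambda_{k+1}} \le \frac{r_0}{\lambda_0} + 2hL(f(x_0)-f(x_{k+1}))\cdot \max_{j}\frac{-(1+\lambda_{j+1})\xi(\lambda_j)}{\lambda_j\lambda_{j+1}(\mu+\lambda_{j+1}\sigma)}$. The core simplification is to bound the coefficient inside the max. Factoring it as $\frac{1+\lambda_{j+1}}{\mu+\lambda_{j+1}\sigma}\cdot\frac{1}{\lambda_{j+1}}\cdot\frac{-\xi(\lambda_j)}{\lambda_j}$, I would use $\frac{-\xi(\lambda_j)}{\lambda_j}\le 1$ and observe that the M\"obius-type function $g(\lambda) := \frac{1+\lambda}{\mu+\lambda\sigma}$ is monotone in $\lambda$, so its maximum over $[\lambda_{\min},\lambda_{\max}]$ is attained at an endpoint and equals $\tau$ by definition; thus $\frac{1+\lambda_{j+1}}{\mu+\lambda_{j+1}\sigma}\le\tau$. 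This bounds each coefficient by $\tau/\lambda_{j+1}$, and since $\{\lambda_j\}$ is decreasing the maximum over $j$ equals $\tau/\lambda_{k+1}$.

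Finally I would substitute this bound, multiply through by $\lambda_{k+1}$, and use $\lambda_{k+1}/\lambda_0 \le 1$ (again by monotonicity and $\lambda_0 = \lambda_{\max}$) to absorb the first term into $r_0$, giving $r_{k+1} \le r_0 + 2h\tau L(f(x_0)-f(x_{k+1}))$, which is exactly \cref{eq:global-param-2}. The argument is essentially careful bookkeeping on top of \cref{lm:global-rk}; the main point requiring attention is the chain of elementary estimates $\mu+\lambda_{j+1}\sigma\ge 1/\tau$ and $g(\lambda_{j+1})\le\tau$, both of which hinge on the monotonicity of $g$ and the containment $\lambda_{j+1}\in[\lambda_{\min},\lambda_{\max}]$, together with confirming that the step-size condition \cref{eq:global-h-2} is strong enough to imply \cref{eq:global-h} for every index $j$ simultaneously.
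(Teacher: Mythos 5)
Your proposal is correct and follows essentially the same route as the paper's proof: verify that \cref{eq:global-h-2} implies \cref{eq:global-h} via $-\xi(\lambda_j) \le \lambda_j$ and $\mu + \lambda_{\min}\sigma \ge 1/\tau$, then bound the coefficient in \cref{eq:global-param} by $\tau/\lambda_{k+1}$ using the monotonicity of $\frac{1+\lambda}{\mu+\lambda\sigma}$, and multiply through using $\lambda_{k+1}/\lambda_0 \le 1$. Your write-up is in fact slightly more explicit than the paper's on the index shift from $r_k$ to $r_{k+1}$ and on the final absorption of the $\lambda_{k+1}/\lambda_0$ factor, but these are presentational differences, not a different argument.
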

	\begin{proof}
	    Since $\xi(\lambda_j) \in [-\lambda_j, 0)$, we have $\frac{\lambda_j}{-\xi(\lambda_j)} \ge 1$. 
	    Therefore, we have 
        \begin{equation*}
	        \frac{\lambda_j}{-2\xi(\lambda_j)} \ge \frac12 \quad \text{and} \quad  \sqrt{\frac{3\lambda_j(\mu+\lambda_{j+1}\sigma)^2}{-\xi(\lambda_j) L G}} \ge \sqrt{\frac{3(\mu+\lambda_{\min}\sigma)^2}{L G}}, 
		\end{equation*} 
	    and hence condition \cref{eq:global-h-2} implies condition \cref{eq:global-h}. 
	    Also, since $\frac{1 + \lambda}{\mu + \lambda \sigma}$ is monotone in $[\lambda_{\min}, \lambda_{\max}]$, it holds that $\tau = \max_{\lambda \in [\lambda_{\min}, \lambda_{\max}]} \frac{1 + \lambda}{\mu + \lambda \sigma}$. 
	    Therefore, we have 
	    \begin{equation*}
	        \max_{j \in [k-1]} \lrrr{\frac{- (1 + \lambda_{j+1}) \xi(\lambda_j)}{\lambda_j \lambda_{j+1} (\mu + \lambda_{j+1} \sigma)}} \le \frac{1 + \lambda_k}{\lambda_k (\mu + \lambda_k \sigma)} \le \frac{\tau}{\lambda_k}. 
	    \end{equation*}
	    Apply the above inequality to \cref{eq:global-param} we obtain \cref{eq:global-param-2}. 
	\end{proof}
		
	\Cref{prop:global-rk} provides a uniform upper bound on accuracy of all almost-optimal solutions $x_k$. 
	When $\frac{\xi(\lambda)}{\lambda} \in [-1, 0)$ for all $\lambda \in [\lambda_{\min}, \lambda_{\max}]$, \cref{alg:euler} generates a solution sequence $\lrrr{x_k}$ such that $\vvt{\nabla F_{\lambda_k}(x_k)} \sim \mathcal{O}(h) + r_0$. 

	\subsection{Linear Interpolation}
	
	In the last section, we provided a general accuracy analysis at all nearly-optimal solutions $x_k$. 
	Then the next step is to construct the entire path $\hat{x}(\lambda): [\lambda_{\min}, \lambda_{\max}] \rightarrow \bbR^P$ based on these nearly-optimal solutions. 
	In this section, we analyze the second procedure in \cref{alg:euler}, \ie linear interpolation. 
	First, we recall the definition of linear interpolation for $x(\cdot), \lambda(\cdot)$: 
	\begin{equation}\label{eq:linear-int}
		\hat{\lambda}(t) := \alpha \lambda_k + (1-\alpha) \lambda_{k+1}, \quad \hat{x}(t) := \alpha x_k + (1-\alpha) x_{k+1}, 
	\end{equation} 
	where $\alpha := \frac{t_{k+1}-t}{h}$, $t \in [t_k, t_{k+1}]$, and $k \in \{ 0, \dots, K-1 \}$.
	That is, given an arbitrary $\lambda \in [\lambda_{\min}, \lambda_{\max}]$, we first select $t$ such that $\hat{\lambda}(t) = \lambda$, then we output $\hat{x} := \hat{x}(t)$ as a nearly-optimal solution to problem $P(\lambda)$. 
	The following lemma provides a uniform upper bound of $\Vert \nabla F_{\lambda} (\hat{x}(\lambda)) \Vert$ for all $\lambda \in [\lambda_{\min}, \lambda_{\max}]$. 

	\begin{theorem}\label{thm:gen-int-reg}
		Suppose \cref{as:lips-hess} holds. 
		Let $r_{\max} = \max_{0 \le k \le K} r_k$. 
		Then, linear interpolation $\hat{x}(\cdot), \hat{\lambda}(\cdot)$ of sequence $\lrrr{(x_k, \lambda_k)}_{k=0}^K$ generated by \cref{eq:linear-int} has the following computational guarantee for all $t \in [t_0, t_K]$: 
		\begin{equation*}
		    \vvt{\nabla F_{\hat{\lambda}(t)} (\hat{x}(t))} \le r_{\tn{max}} + \frac{L}{8} \cdot \max_{k \in [K-1]} \lrrr{(1+\lambda_k) \vvta{x_{k+1}-x_k}^2 + 2h \vta{\xi(\lambda_k)} \vvta{x_{k+1}-x_k}}. 
		\end{equation*}
	\end{theorem}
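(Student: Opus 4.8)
The plan is to fix an arbitrary $t \in [t_k, t_{k+1}]$ for some $k \in \{0, \ldots, K-1\}$, write $\hat{x} := \hat{x}(t) = \alpha x_k + (1-\alpha) x_{k+1}$ and $\hat{\lambda} := \hat{\lambda}(t) = \alpha \lambda_k + (1-\alpha)\lambda_{k+1}$ with $\alpha = \frac{t_{k+1}-t}{h} \in [0,1]$, and to compare the interpolated gradient $A := \nabla F_{\hat{\lambda}}(\hat{x})$ against the reference vector $B := \alpha \nabla F_{\lambda_k}(x_k) + (1-\alpha)\nabla F_{\lambda_{k+1}}(x_{k+1})$. The triangle inequality gives immediately $\vvt{B} \le \alpha r_k + (1-\alpha) r_{k+1} \le r_{\max}$, so the whole task reduces to bounding $\vvt{A - B}$, which I will split into a \emph{fixed-parameter interpolation error} and a \emph{$\lambda$-mismatch} term.

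First I would re-express $B$ relative to the parameter $\hat{\lambda}$ by writing $\nabla F_{\lambda_k}(x_k) = \nabla F_{\hat{\lambda}}(x_k) + (\lambda_k - \hat{\lambda})\nabla \Omega(x_k)$ and analogously at $x_{k+1}$. Using the Euler relation $\lambda_{k+1} - \lambda_k = h\,\xi(\lambda_k)$ one computes $\lambda_k - \hat{\lambda} = -(1-\alpha)h\,\xi(\lambda_k)$ and $\lambda_{k+1} - \hat{\lambda} = \alpha h\,\xi(\lambda_k)$, so that
\begin{equation*}
B = \underbrace{\alpha \nabla F_{\hat{\lambda}}(x_k) + (1-\alpha)\nabla F_{\hat{\lambda}}(x_{k+1})}_{=:\,B'} + \alpha(1-\alpha)h\,\xi(\lambda_k)\lr{\nabla \Omega(x_{k+1}) - \nabla \Omega(x_k)}.
\end{equation*}
The trailing term is controlled by the $L$-Lipschitz continuity of $\nabla \Omega$ together with $\alpha(1-\alpha) \le \frac14$, producing the bound $\frac{L}{8}\cdot 2h\,\vt{\xi(\lambda_k)}\vvt{x_{k+1}-x_k}$, which is exactly the second summand in the claimed estimate.

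Next I would bound $\vvt{B' - A}$, the interpolation error of $F_{\hat{\lambda}}$ at the \emph{fixed} parameter $\hat{\lambda}$. The key observations are that $F_{\hat{\lambda}} = f + \hat{\lambda}\Omega$ has a $(1+\hat{\lambda})L$-Lipschitz Hessian, and that centering the Taylor expansions of $\nabla F_{\hat{\lambda}}$ at $\hat{x}$ forces the first-order terms to cancel, precisely because $\hat{x}$ is the convex combination $\alpha x_k + (1-\alpha)x_{k+1}$, so that $\alpha(x_k - \hat{x}) + (1-\alpha)(x_{k+1}-\hat{x}) = 0$. Applying \cref{it:tay-exp-1} of \cref{lm:tay-exp} to the two remainders, substituting $\vvt{x_k - \hat{x}} = (1-\alpha)\vvt{x_{k+1}-x_k}$ and $\vvt{x_{k+1}-\hat{x}} = \alpha\vvt{x_{k+1}-x_k}$, and using once more $\alpha(1-\alpha) \le \frac14$ together with $\hat{\lambda} \le \lambda_k$, yields $\vvt{B' - A} \le \frac{(1+\lambda_k)L}{8}\vvt{x_{k+1}-x_k}^2$, which is the first summand in the claim. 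Combining the three estimates through $\vvt{A} \le \vvt{B} + \vvt{B' - A} + \vvt{B - B'}$ and taking the maximum over $k$ gives the stated bound.

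The routine parts are the two invocations of the quadratic Taylor remainder and the scalar bookkeeping with $\alpha$. The one step demanding genuine care is the reorganization of $B$ into $B'$ plus the mismatch term: this is where the Euler relation $\lambda_{k+1}-\lambda_k = h\,\xi(\lambda_k)$ is essential, as it converts what would otherwise be an awkward $\lambda$-weighted first-order contribution involving the (unbounded) operator norm of $\nabla^2 \Omega(\hat{x})$ into the clean gradient difference $\nabla \Omega(x_{k+1}) - \nabla \Omega(x_k)$ that is controlled purely by Lipschitzness of $\nabla\Omega$.
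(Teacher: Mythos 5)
Your proof is correct and follows essentially the same route as the paper's: the same reference vector $\alpha \nabla F_{\lambda_k}(x_k) + (1-\alpha)\nabla F_{\lambda_{k+1}}(x_{k+1})$, the same re-anchoring of the penalty parameter at $\hat{\lambda}$ to isolate the mismatch term $\alpha(1-\alpha)h\,\xi(\lambda_k)\lr{\nabla \Omega(x_{k+1}) - \nabla \Omega(x_k)}$, and the same convex-combination cancellation in the quadratic Taylor remainders. The only cosmetic difference is that you bundle $f + \hat{\lambda}\Omega$ into a single function with $(1+\hat{\lambda})L$-Lipschitz Hessian, whereas the paper bounds the $f$-part and the $\Omega$-part of the interpolation error separately before summing.
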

	\begin{proof}
		First suppose $t \in [t_k, t_{k+1}]$. 
		For simplicity, we define $x := \hat{x}(t)$, $\lambda := \hat{\lambda}(t)$, $\delta_1 := \nabla f(x_k) + \lambda_k \nabla \Omega(x_k)$, $\delta_2 := \nabla f(x_{k+1}) + \lambda_{k+1} \nabla \Omega(x_{k+1})$. 
		By triangle inequality and \cref{it:tay-exp-1} in \cref{lm:tay-exp}, it holds that 
		\begin{align*}
		    \vvt{\alpha \nabla f(x_k) + (1-\alpha) \nabla f(x_{k+1}) - \nabla f(x)} & \le \frac{\alpha(1-\alpha)L}{2} \vvt{x_k - x_{k+1}}^2. 
		\end{align*}
		Also, by applying similar trick on $\nabla \Omega(\cdot)$, it holds that 
		\begin{align*}
			&\vvt{\lambda \nabla \Omega(x) - \alpha \lambda_k \nabla \Omega(x_k) - (1-\alpha) \lambda_{k+1} \nabla \Omega(x_{k+1})} \\
			\le &\vvt{\lambda (\alpha \nabla \Omega(x_k) + (1-\alpha) \nabla \Omega(x_{k+1}) - \nabla \Omega(x))} \\ 
			& + \vvt{\alpha (\lambda - \lambda_k) \nabla \Omega(x_k) + (1-\alpha) (\lambda - \lambda_{k+1}) \nabla \Omega(x_{k+1})} \\
			\le &\frac{\lambda L}{2} \alpha (1-\alpha) \vvt{x_{k+1}-x_k}^2 + \vvt{\alpha(1-\alpha)(\lambda_{k+1}-\lambda_{k}) (\nabla \Omega(x_{k+1}) - \nabla \Omega(x_k))} \\
			\le &\frac{\lambda L}{8} \vvt{x_{k+1}-x_k}^2 + \frac{h \vt{\xi(\lambda_k)} L}{4} \vvt{x_{k+1}-x_k}. 
		\end{align*}
		Combine the above two inequality and apply triangle inequality, we have
		\begin{align*}
			&\vvt{\nabla f(x) + \lambda \nabla \Omega(x) - \alpha \delta_1 - (1-\alpha) \delta_2} \\
			\le & \frac{L}{8} \vvt{x_{k+1}-x_k}^2 + \frac{\lambda L}{8} \vvt{x_{k+1}-x_k}^2 + \frac{\vt{\xi(\lambda_k)} L h}{4} \vvt{x_{k+1}-x_k}. 
		\end{align*}
	\end{proof} 
		
	\Cref{thm:gen-int-reg} provides the computational guarantee of linear interpolation of the sequence $\lrrr{(x_k, \lambda_k}_{k=0}^K$ generated by the update scheme \cref{eq:gen-euler1}. 
	Observing that $\vvt{x_{k+1} - x_k}$ is of the order $\mathcal{O}(h)$, we see that the additional error incurred by linear interpolation is of the order $\mathcal{O}(h^2)$. 
	Together with the $\mathcal{O}(h)$ accuracy from the update scheme \cref{eq:gen-euler1}, we are able to provide a computational guarantee on the accuracy of the approximate path generated by \cref{alg:euler}. 
	In the following part, we will provide a uniform bound on $\vvta{\nabla F_{\hat{\lambda}(t)} (\hat{x}(t))}$ for a family of $\lambda(t)$ and discretization. 

	\subsection{Computational Guarantee for the Exponential Decaying Parameter Sequence}
	Under the $\lambda(t) = \lambda_{\max} \cdot \exp(-t)$ scenario, we have $\xi(\lambda) = -\lambda$ and $\xi(\cdot)$ satisfies the assumption in \cref{prop:global-rk}. 
	Therefore, we extend the result of \cref{thm:gen-int-reg} and provide an explicit uniform bound of the path accuracy. 
	
	\begin{proposition}\label{prop:gen-int-reg-exp}
	    Suppose \cref{as:lips-hess} holds, and the step-size $h$ satisfies the condition in \cref{eq:global-h-2}. 
	    Let $\hat{x}(\cdot): [0, T] \to \bbR^p$ and $\hat{\lambda}(\cdot): [0, T] \to [\lambda_{\min}, \lambda_{\max}]$ denote the approximate solution path generated by \cref{alg:euler}. 
	    Then, we have the following computational guarantee for all $\lambda \in [\lambda_{\min}, \lambda_{\max}]$: 
		\begin{equation}\label{eq:acc-path-uniform}
		    \vvt{\nabla F_\lambda (\hat{x}(\lambda))} \le \vvt{\nabla F_{\lambda_{\max}}(x_0)} + 2h \tau L (f(x_0)-f^*) + \frac{h^2 L}{8} \cdot (\tau G + 1)^2. 
		\end{equation}
	\end{proposition}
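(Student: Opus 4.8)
The plan is to decompose the path accuracy via \cref{thm:gen-int-reg}, which already bounds $\vvt{\nabla F_{\hat\lambda(t)}(\hat x(t))}$ by the worst grid-point accuracy $r_{\max}$ plus an interpolation error term, and then to control each piece in the special case $\xi(\lambda)=-\lambda$. First I would record that the exponential schedule satisfies $-\lambda \le \xi(\lambda) < 0$ and $\lambda_{k+1}=(1-h)\lambda_k \ge \lambda_{\min}$, so the hypotheses of \cref{prop:global-rk} are met under the step-size condition \cref{eq:global-h-2}. Since every $\lambda\in[\lambda_{\min},\lambda_{\max}]$ equals $\hat\lambda(t)$ for some $t\in[0,T]$ (the interpolated parameter decreases continuously from $\lambda_{\max}$ to $\lambda_{\min}$), a uniform bound over $t$ translates directly into the uniform bound over $\lambda$ claimed in \cref{eq:acc-path-uniform}.

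For the first contribution, I would invoke \cref{prop:global-rk} to get $r_{k+1}\le r_0 + 2h\tau L(f(x_0)-f(x_{k+1}))$ for every $k$. Bounding $f(x_{k+1})\ge f^\ast$ and noting $r_0=\vvt{\nabla F_{\lambda_{\max}}(x_0)}$ (because $\lambda_0=\lambda_{\max}$) yields $r_{\max}\le \vvt{\nabla F_{\lambda_{\max}}(x_0)} + 2h\tau L(f(x_0)-f^\ast)$, which is exactly the first two terms of \cref{eq:acc-path-uniform}.

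The remaining and most delicate task is to show that the interpolation error term is at most $\tfrac{h^2L}{8}(\tau G+1)^2$. I would substitute the Euler update to write $\vvt{x_{k+1}-x_k}=h\vvt{v(x_k,\lambda_{k+1})}$, use $\vvt{v(x_k,\lambda_{k+1})}\le \tfrac{\vvt{\nabla f(x_k)}}{\mu+\lambda_{k+1}\sigma}\le \tfrac{G}{\mu+\lambda_{k+1}\sigma}$ (since $x_k$ lies in the level set, by the monotonicity established in \cref{lm:global-rk}), together with $\vta{\xi(\lambda_k)}=\lambda_k$. After factoring out $h^2$, the bound of \cref{thm:gen-int-reg} reduces to the scalar inequality $(1+\lambda_k)s^2 + 2\lambda_k s \le (\tau G+1)^2$, where $s:=\vvt{v(x_k,\lambda_{k+1})}$.

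The hard part will be verifying this scalar inequality with the precise constant $(\tau G+1)^2$, since $s$ is controlled only through $\lambda_{k+1}$ whereas the coefficients involve $\lambda_k$, and $\lambda_k$ exceeds $\lambda_{k+1}$ by the factor $1/(1-h)$; a naive estimate only gives a constant of order $2\tau$. The approach I would take is to lower bound the right-hand side using the defining property $\tau(\mu+\lambda_{k+1}\sigma)\ge 1+\lambda_{k+1}$, and then, writing $a:=\mu+\lambda_{k+1}\sigma$ and inserting the worst case $s= G/a$, clear denominators to reduce the claim to the nonnegativity of a quadratic $AG^2+BG+C$ in $G$, with $C=a^2>0$, $A=(1-h)^2\lambda_k^2+(1-2h)\lambda_k$, and $B=2(1-h\lambda_k)a$. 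The step-size bound $h\le \tfrac12$ from \cref{eq:global-h-2} forces $A\ge0$; when $B\ge0$ the quadratic is manifestly nonnegative, and when $B<0$ one necessarily has $h\lambda_k>1$, hence $\lambda_k>2$, in which case a short computation shows $B^2-4AC$ equals $4a^2(1-\lambda_k-\lambda_k^2+2h\lambda_k^2)<0$, so the quadratic has no real roots and stays positive. Assembling this interpolation bound with the bound on $r_{\max}$ yields \cref{eq:acc-path-uniform}.
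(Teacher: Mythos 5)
Your proposal is correct, and its overall skeleton is the same as the paper's: bound $r_{\max}$ via \cref{prop:global-rk} (with $f(x_{k+1}) \ge f^\ast$ and $r_0 = \vvt{\nabla F_{\lambda_{\max}}(x_0)}$), then control the interpolation term from \cref{thm:gen-int-reg}. Where you genuinely depart from the paper is in verifying the constant $(\tau G+1)^2$. The paper's proof simply substitutes $\vvta{x_{k+1}-x_k} \le hG/(\mu+\lambda_k\sigma)$ and bounds $\tfrac{(1+\lambda_k)G^2}{(\mu+\lambda_k\sigma)^2} + \tfrac{2\lambda_k G}{\mu+\lambda_k\sigma} \le \tau^2G^2+2\tau G \le (\tau G+1)^2$; but this uses $\lambda_k$ in the denominator, whereas the Euler step computes $d_k = v(x_k,\lambda_{k+1})$, so the honest bound has $\mu+\lambda_{k+1}\sigma$ there, which is \emph{smaller} — exactly the $\lambda_k$ versus $\lambda_{k+1}$ mismatch you flag. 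A naive fix only yields $\tfrac{1}{1-h}(\tau G+1)^2$. Your discriminant argument closes this gap rigorously: with $A=(1-h)^2\lambda_k^2+(1-2h)\lambda_k \ge 0$ (using $h\le\tfrac12$), $B=2(1-h\lambda_k)a$, $C=a^2$, and $B^2-4AC = 4a^2\bigl(1-\lambda_k-(1-2h)\lambda_k^2\bigr) < 0$ whenever $B<0$ (since then $\lambda_k>1/h\ge 2$), the quadratic $AG^2+BG+C$ is nonnegative, which is precisely the inequality the paper asserts without justification. So your proof buys a fully rigorous verification of the paper's stated constant, at the cost of a longer elementary computation; the paper's version is shorter but, as written, glosses over a step that is not literally valid.
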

	\begin{proof}
	    First we extend the result in \cref{prop:global-rk}, and it holds that 
	    \begin{equation*}
	        r_{\max} \le \max_{k \in [K]} r_k \le r_0 + 2 h L (f(x_0) - f^*) \cdot \tau. 
	    \end{equation*}
	    Also, for the result in \cref{thm:gen-int-reg}, we further have 
	    \begin{align*}
	        & \frac{L}{8} \cdot \max_{k \in [K-1]} \lrrr{(1+\lambda_k) \vvta{x_{k+1}-x_k}^2 + 2h \vta{\xi(\lambda_k)} \vvta{x_{k+1}-x_k}} \\ 
	        \le & \frac{h^2 L}{8} \cdot \max_{k \in [K-1]} \lrrr{\frac{(1 + \lambda_k) G^2}{(\mu + \lambda_k \sigma)^2} + \frac{2 \lambda_k G}{\mu + \lambda_k \sigma}} \le \frac{h^2 L}{8} \cdot (\tau G + 1)^2. 
	    \end{align*}
	    By combining the above two inequalities, \cref{prop:global-rk}, and \cref{thm:gen-int-reg}, we obtain \cref{eq:acc-path-uniform}. 
	\end{proof}

	In \cref{alg:euler}, the sequence $\lrrr{\lambda_k}_{k=0}^{K}$ is given by $\lambda_{k+1} = (1-h)\lambda_k$, and implies $\lambda_{\min} = (1-h)^K \lambda_{\max}$. 
	Therefore, we have $h = 1 - (\tfrac{\lambda_{\min}}{\lambda_{\max}})^{1/K}$. 
	Applying this to \cref{prop:gen-int-reg-exp}, we arrive at the complexity analysis with respect to the number of iterations. 
	We formalize the complexity analysis in the following proof of \cref{thm:err-ctrl-exp}, which appears at the beginning of this section. 

	\begin{proof}[Proof of \cref{thm:err-ctrl-exp}]
		The conditions that $K \ge \max \lrrr{2T, \frac{\sqrt{LG} \tau T}{\sqrt{3} }}$ and $h = 1 - \lr{\frac{\lambda_{\min}}{\lambda_{\max}}}^{1/K} \le \frac{T}{K}$ guarantee that step-size $h$ satisfies \cref{eq:global-h-2}. 
		Also, $K \ge \frac{\tau L T (f(x_0)-f^*)}{\epsilon}$ and $K \ge \frac{(\tau G + 1) \sqrt{L} T}{\sqrt{\epsilon}}$ guarantee that $2 h \tau L (f(x_0)-f^*) \le \frac{\epsilon}{2}$ and $\frac{h^2 L}{8} \cdot (\tau G + 1)^2 \le \frac{\epsilon}{4}$. 
		Hence \cref{alg:euler} guarantees a $\epsilon$-accurate solution path. 
		\end{proof}

	Recall that in the assumption of \cref{thm:err-ctrl-exp}, it requires a good initialization $x_0$ satisfying $\vvt{\nabla F_{\lambda_{\max}}(x_0)} \le \frac{\epsilon}{2}$. 
	In practical cases, we can either implement a specific convex optimization algorithm to solve a $x_0$ satisfying the initial condition or use the initialization suggested in the following lemma. 
	Here we suggest one choice of initialization with computational guarantee when the function $\Omega(\cdot)$ is structured, \ie the optimal solution to minimize $\Omega(\cdot)$ is easy to compute. 
	\newtext{
	Let $x_{\Omega} := \arg \min_{x \in \bbR^p} \Omega(x)$ and $x_0 := x_{\Omega} - (\nabla^2 f(x_{\Omega}) + \lambda_{\max} \nabla^2 \Omega(x_{\Omega}))^{-1} \nabla f(x_{\Omega})$. 
	Intuitively, the initialization $x_0$ is a Newton step from the optimizer of $\Omega(\cdot)$, and we can show that $\vvt{\nabla F_{\lambda_{\max}}(x_0)} \le \frac{L (1 + \lambda_{\max}) \vvta{\nabla f(x_{\Omega})}^2}{2 (\mu + \lambda_{\max} \sigma)^2}$. 
	}
	Notice that the value of $L$ and $\vvt{\nabla f(x_{\Omega})}$ are independent of $\lambda_{\max}$. 
	Hence, when $\lambda_{\max}$ is large enough, we have $r_0 \le \frac{\epsilon}{4}$. 
	Also, since $x_{\Omega}$ is the optimal solution when $\lambda = +\infty$, the initialization can be considered as an update step of \cref{eq:gen-euler1} from $x_{\Omega}$.

\section{Multi-Stage Discretizations}\label{sec:multi}

    In the analysis of the previous section, \cref{alg:euler} guarantees an $\epsilon$-accurate solution path within $\mathcal{O}(\epsilon^{-1})$ calls to the gradient, Hessian oracle, and linear system solver. 
    One advantage of the main result proposed in \cref{thm:err-ctrl-exp} is that only the smooth Hessian of $f(\cdot)$ and $\Omega(\cdot)$ is required and no assumption of $\epsilon$ is required. 
    When $f(\cdot)$ and $\Omega(\cdot)$ have better properties and $\epsilon$ is relatively small, one would like an algorithm that utilizes these properties and requires fewer calls to the oracle with respect to the order of $\epsilon$. 
	Motivated by the multistage numerical methods for solving differential equations, we design several update schemes to achieve higher-order accuracy. 
	In particular, in this section we still consider the exponentially decaying parameter, that is, $\lambda(t) = e^{-t} \lambda(0)$. 

	\subsection{The Trapezoid Method}\label{sec:trapezoid}
	
	In this section, we propose and analyze the \emph{trapezoid method}, whose formal description is given in \cref{alg:trapezoid}. 
	The trapezoid method is beneficial when the functions $f(\cdot)$ and $\Omega(\cdot)$ have Lipschitz continuous third-order derivatives, and it achieves a higher-order accuracy than the implicit Euler method. 
	The accuracy of the output path of \cref{alg:trapezoid} is of order $\mathcal{O}(h^2)$ where $h$ is the step-size, or equivalently, $\mathcal{O}(K^{-2})$ where $K$ is the number of iterations. 
	Moreover, we want to mention that the algorithm does not require the oracle to higher-order derivatives, but still requires a gradient and Hessian oracle as in the Euler method. 
		
	\begin{algorithm}
		\caption{Trapezoid method for solution path}
		\label{alg:trapezoid}
		\begin{algorithmic}[1]
		    \STATE{\textbf{Input:} Initial point $x_0 \in \bbR^p$, total number of iterations $K \geq 1$}
		    \STATE{Initialize parameter $\lambda_0 \gets \lambda_{\max}$, set step-size $h \gets 1 - \sqrt{2(\frac{\lambda_{\min}}{\lambda_{\max}})^{\frac{1}{K}} - 1}$}
		    \FOR{$k = 0, \dots, K-1$}{
		        \STATE{$d_{k,1} \gets v(x_k, \lambda_k)$ and $d_{k,2} \gets v(x_k + h \cdot d_{k,1}, (1-h+h^2)\lambda_k)$}
		        \STATE{$x_{k+1} \gets x_k + h \cdot \frac{d_{k,1}+d_{k,2}}{2}$ and $\lambda_{k+1} \gets (1-h+\frac{h^2}{2}) \lambda_k$}
		    }
		    \ENDFOR
		    \RETURN{$\hat{x}(\cdot) \gets \mathcal{I}_{\textnormal{linear}}\lr{{\{(x_k, \lambda_k)\}}_{k=0}^K}$ according to linear interpolation}
		\end{algorithmic}
	\end{algorithm}
		
	We first state the main technical assumption and computational guarantees of \cref{alg:trapezoid}. 
		    
	\begin{assumption}\label{as:third-lips}
	    In addition to \cref{as:lips-hess}, we assume that the third-order directional derivative of $f(\cdot)$ and $\Omega(\cdot)$ are $L$-Lipschitz continuous and $\sigma \ge 1$. 
	\end{assumption} 
			
	\begin{theorem}\label{thm:err-ctrl-trapezoid}
	    Suppose \cref{as:third-lips} holds, let $\epsilon > 0$ be desired accuracy, let $\tilde{\mu} := \mu + \lambda_{\min} \sigma$, suppose that the initial point $x_0$ satisfies $\vvt{\nabla F_{\lambda_{\min}}(x_0)} \le \frac{\epsilon}{2} \leq \tilde{\mu}$, let $T := 1.1 \log(\lambda_{\max} / \lambda_{\min})$, and let 
    	\begin{equation*}
	        K_{\tn{tr}} := \left\lceil \max \lrrr{10T, \frac{8LT(1+G)}{\tilde{\mu}}, \frac{6 L^{1/2} (1+G)^{3/2} T}{\epsilon^{1/2}}, \frac{5 \tau^{2/3} L (1+G)^{4/3}T}{\epsilon^{1/3}}} \right\rceil. 
        \end{equation*}
		If the total number of iterations $K$ satisfies $K \geq K_{\tn{tr}}$, then \cref{alg:trapezoid} returns an $\epsilon$-accurate solution path.
	\end{theorem}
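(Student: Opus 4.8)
The plan is to follow the same three-part template used for the Euler method in \cref{sec:discretizations} — a per-step gradient-error recursion, a telescoped uniform bound at the grid points, and a linear-interpolation bound — but with each power of the step-size $h$ raised by one to reflect that the trapezoid update is second-order accurate. Concretely, I expect a uniform grid accuracy of order $\mathcal{O}(h^2)$ and an interpolation error of order $\mathcal{O}(h^2)$, so that the returned path has accuracy $\mathcal{O}(h^2)$ and hence $K = \mathcal{O}(1/\sqrt{\epsilon})$ iterations suffice.

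The first and most delicate step is to establish the analogue of \cref{lm:gen-err-2}: a one-step bound of the form $r_{k+1} \le \tfrac{\lambda_{k+1}}{\lambda_k} r_k + \mathcal{O}(h^3)$, with the cubic remainder controlled by $\vvt{d_{k,1}}$, $\vvt{d_{k,2}}$ and the third-derivative Lipschitz constant furnished by \cref{as:third-lips}. The essential point is that the two stages $d_{k,1} = v(x_k,\lambda_k)$ and $d_{k,2} = v(x_k + h\,d_{k,1}, (1-h+h^2)\lambda_k)$, the averaged step $x_{k+1} = x_k + \tfrac{h}{2}(d_{k,1}+d_{k,2})$, and the parameter update $\lambda_{k+1} = (1-h+\tfrac{h^2}{2})\lambda_k$ are calibrated precisely so that the $\mathcal{O}(h^2)$ terms in the Taylor expansion of $\nabla F_{\lambda_{k+1}}(x_{k+1})$ about $x_k$ cancel. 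To expose this cancellation I would Taylor-expand $\nabla f$ and $\nabla \Omega$ to third order (a third-order refinement of \cref{lm:tay-exp}), expand $d_{k,2}$ around $(x_k,\lambda_k)$, and match the resulting coefficients against the exact relation $\tfrac{\dd}{\dd t}\nabla F_{\lambda(t)}(x(t)) = 0$ for the flow $\lambda(t) = \lambda_{\max}e^{-t}$. This coefficient matching is the step I expect to be the main obstacle: it is where the specific constants $1-h+h^2$ and $1-h+\tfrac{h^2}{2}$ must be shown to annihilate the second-order error and leave only an $\mathcal{O}(h^3)$ residual, and everything downstream is comparatively routine.

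Next I would prove a descent/invariance lemma in the spirit of \cref{lm:global-rk}: under a smallness condition on $h$ — the source of the $10T$ and $\tfrac{8LT(1+G)}{\tilde{\mu}}$ entries of $K_{\tn{tr}}$, and where the hypothesis $\sigma \ge 1$ enters through $\mu + \lambda\sigma \ge \lambda$ to simplify the norm bounds $\vvt{d_{k,i}} \le G/\tilde{\mu}$ — the iterates remain in the level set $\{x : f(x) \le f(x_0)\}$, so that \cref{as:lips-hess} keeps $G$ a valid uniform gradient bound and the intermediate point $x_k + h\,d_{k,1}$ stays in the region where the Lipschitz estimates apply. Telescoping the one-step recursion over $k$, using the geometric contraction factor $\lambda_{k+1}/\lambda_k < 1$ and absorbing part of the accumulated residual against $f(x_0) - f^\ast$ via the descent inequality, then yields a uniform grid bound $r_k \le r_0 + \mathcal{O}(h^2)$. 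The two $\epsilon$-dependent thresholds in $K_{\tn{tr}}$ (scaling as $\epsilon^{-1/2}$ and $\epsilon^{-1/3}$) arise from requiring the resulting accuracy and the attendant step-size-validity conditions to be at most a constant fraction of $\epsilon$; the $\epsilon^{-1/2}$ requirement dominates and produces the headline $\mathcal{O}(1/\sqrt{\epsilon})$ rate.

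Finally, the interpolation error is handled by \cref{thm:gen-int-reg}, whose proof uses only the linear interpolation formula together with \cref{lm:tay-exp} and therefore applies to the trapezoid sequence once the factor $h\,\vta{\xi(\lambda_k)}$ is read as $\vt{\lambda_{k+1}-\lambda_k} \le h\lambda_k$. Since $\vvt{x_{k+1}-x_k} = \tfrac{h}{2}\vvt{d_{k,1}+d_{k,2}} = \mathcal{O}(h)$, this adds an $\mathcal{O}(h^2)$ term matching the grid accuracy, exactly as in \cref{prop:gen-int-reg-exp}. Combining the grid bound, the interpolation bound, and the initialization hypothesis on $x_0$ gives total path accuracy at most $r_0 + \mathcal{O}(h^2)$; substituting $h = 1 - \sqrt{2(\lambda_{\min}/\lambda_{\max})^{1/K} - 1} \le \mathcal{O}(T/K)$ and collecting the smallness and accuracy conditions then reproduces the four terms of $K_{\tn{tr}}$ and the claimed $\mathcal{O}(1/\sqrt{\epsilon})$ iteration complexity.
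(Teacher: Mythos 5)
Your overall skeleton --- a one-step recursion with $\mathcal{O}(h^3)$ local error, a uniform bound at the grid points, and a linear-interpolation bound --- matches the paper, and your reading of the first and third parts is accurate. The crux is indeed the analogue of \cref{lm:gen-err-2} with contraction factor $\lambda_{k+1}/\lambda_k$ and a cubic remainder: this is \cref{lmm:trapezoid-local}, proved in the appendix by exactly the third-order Taylor/coefficient-cancellation argument you describe. The interpolation step is likewise handled by \cref{thm:gen-int-reg} together with the bound $(1+\lambda_k)\vvt{x_{k+1}-x_k} \le 3h(1+G)$, which \cref{lmm:trapezoid-local} also supplies.

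The genuine gap is in your middle step. You propose to port the Euler machinery of \cref{lm:global-rk}: prove $f(x_{k+1}) \le f(x_k)$, keep iterates in the level set, telescope, and absorb the accumulated residual against $f(x_0)-f^\ast$. There are two problems. First, descent is not ``comparatively routine'' for the trapezoid step: unlike the Euler step, the direction $\tfrac12(d_{k,1}+d_{k,2})$ is not $-H_k^{-1}\nabla f(x_k)$, so the perturbation $d_{k,2}-d_{k,1}$ must be dominated by the Newton-decrement term $-d_{k,1}^T H_k d_{k,1}$; since $\vvt{\nabla f(x_k)}$ can only be related to the step via $\vvt{\nabla f(x_k)} \le (1+\lambda_k)L\vvt{d_{k,1}}$ while $\vvt{d_{k,2}-d_{k,1}} \le 2hL\tau(\vvt{d_{k,1}}+\vvt{d_{k,1}}^2)$ (\cref{lm:diff-ind-reg}), the resulting step-size restriction picks up factors of order $1+\lambda_{\max}$ that are absent from $K_{\tn{tr}}$. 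Second, and decisively, any bound obtained by absorbing against $f(x_0)-f^\ast$ makes $K$ depend on $f(x_0)-f^\ast$, but the stated $K_{\tn{tr}}$ contains no such term (in contrast to $K_{\tn{E}}$), so this route cannot reproduce the theorem as stated; plain telescoping without absorption also falls short, costing an extra factor of $T$. The paper's proof needs neither descent nor telescoping: it runs a pure induction on $r_k \le \tfrac{\epsilon}{2}$. Since $\tfrac{\epsilon}{2} \le \tilde\mu$ (this is precisely why that hypothesis appears in the statement), \cref{lmm:trapezoid-local} applies at each step and gives $r_{k+1} \le (1-h+\tfrac{h^2}{2})\tfrac{\epsilon}{2} + 3Lh^3(1+G)^3 + 2L^3\tau^2h^4(1+G)^4$, and the step-size bounds implied by $K \ge K_{\tn{tr}}$ make the additive $\mathcal{O}(h^3)$ terms smaller than $(h-\tfrac{h^2}{2})\tfrac{\epsilon}{2}$, so $r_{k+1} \le \tfrac{\epsilon}{2}$. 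In other words, the geometric decay of $\lambda_k$ itself supplies the damping you were trying to extract from a descent inequality; that contraction-based induction is the missing idea, and it is what keeps $K_{\tn{tr}}$ free of $f(x_0)-f^\ast$ and linear in $T$.
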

			
	The result in \cref{thm:err-ctrl-trapezoid} shows that we improve the total complexity to $\mathcal{O}(\frac{1}{\sqrt{\epsilon}})$, which is better than the $\mathcal{O}(\frac{1}{\epsilon})$ complexity of the Euler method and the best known results for the grid search method (see \cref{thm:err-ctrl-exp,rmk:grid-search}). 
	Similarly to the previous complexity analysis of the semi-implicit Euler method, the analysis of the trapezoid method consists of two parts: we first present the computational guarantee of the trapezoid update scheme, which is defined as 
	\begin{equation}\label{eq:trapezoid}
	    (x_{k+1}, \lambda_{k+1}) \gets T(x_k, \lambda_k) := \lr{x_k + h \cdot \tfrac{d_{k,1}+d_{k,2}}{2}, (1-h+\tfrac{h^2}{2}) \lambda_k}, 
	\end{equation}
	where $d_{k, 1} = v(x_k, \lambda_k)$, and $d_{k, 2} = v(x_k + h \cdot d_{k,1}, (1-h+h^2) \lambda_k)$. 
	
	\begin{lemma}\label{lmm:trapezoid-local}
		Suppose \cref{as:third-lips} holds, $r_k = \vvt{\nabla F_{\lambda_k}(x_k)} \le \tilde{\mu}$, and the next iterate $(x_{k+1}, \lambda_{k+1})$ is given by $(x_{k+1}, \lambda_{k+1}) = T(x_k, \lambda_k)$ defined in \cref{eq:trapezoid}. 
		Then, it holds that $(1 + \lambda_k) \vvta{x_k - x_{k+1}} \le 3 h (1+G)$,  and 
		\begin{equation}\label{eq:third-err}
			r_{k+1} := \vvt{\nabla F_{\lambda_{k+1}}(x_k)} \le \frac{\lambda_{k+1}}{\lambda_k} \cdot r_k + h^3 \cdot 3 L (1+G)^3 + h^4 \cdot 2 L^3 \tau^2 (1+G)^4. 
		\end{equation}
	\end{lemma}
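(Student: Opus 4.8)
The plan is to mirror the one-step analysis of the semi-implicit Euler scheme in \cref{lm:gen-err-2}, but to carry it out to one higher order in $h$ so as to exploit the Lipschitz third derivatives granted by \cref{as:third-lips}. Throughout I abbreviate $g_1 := \nabla F_{\lambda_k}(x_k) = \nabla f(x_k) + \lambda_k\nabla\Omega(x_k)$ (so $\vvt{g_1} = r_k$), $H_1 := \nabla^2 f(x_k) + \lambda_k\nabla^2\Omega(x_k)$, and $\Delta := x_{k+1}-x_k = \tfrac{h}{2}(d_{k,1}+d_{k,2})$, with $d_{k,1},d_{k,2}$ as in \cref{eq:trapezoid}.

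I would first establish the magnitude bound. Since $H_1 \succeq (\mu+\lambda_k\sigma)I$, we have $\vvt{d_{k,1}} = \vvt{H_1^{-1}\nabla f(x_k)} \le \vvt{\nabla f(x_k)}/(\mu+\lambda_k\sigma)$. The key observation is that $\lambda_k\nabla\Omega(x_k) = g_1 - \nabla f(x_k)$, so $\lambda_k\vvt{\nabla\Omega(x_k)} \le r_k + G \le \tilde\mu + G$; writing $\nabla f = g_1 - \lambda_k\nabla\Omega$ and using $\lambda_k/(\mu+\lambda_k\sigma) \le 1/\sigma \le 1$ (valid since $\sigma\ge1$) lets the troublesome factor $\lambda_k$ cancel between numerator and denominator. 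Combined with $\vvt{\nabla f(x_k)},\vvt{\nabla\Omega(x_k)}\le G$ on the level set and $r_k\le\tilde\mu$, this yields $\vvt{d_{k,1}}=O(1+G)$; the analogous estimate for $d_{k,2}=v(x_k+hd_{k,1},(1-h+h^2)\lambda_k)$ (whose arguments remain near the level set for small $h$, controlled by Lipschitzness) then delivers the stated $(1+\lambda_k)\vvt{x_k-x_{k+1}}\le 3h(1+G)$. I would also record $\vvt{d_{k,2}-d_{k,1}}=O(h)$ for later use.

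For the residual recursion I expand $\nabla f$ and $\nabla\Omega$ at $x_{k+1}$ to \emph{second} order about $x_k$, so the remainders are third order: using the third-order Taylor estimate $\vvt{\nabla f(y)-\nabla f(x)-\nabla^2 f(x)(y-x)-\tfrac12\nabla^3 f(x)[y-x,y-x]}\le\tfrac{L}{6}\vvt{y-x}^3$ (which holds under \cref{as:third-lips}, extending \cref{it:tay-exp-1} of \cref{lm:tay-exp}), and similarly for $\Omega$, I get $r_{k+1}\le\vvt{P}+\tfrac{L}{6}(1+\lambda_{k+1})\vvt{\Delta}^3$, where $P$ is the second-order Taylor polynomial $\nabla f(x_k)+\lambda_{k+1}\nabla\Omega(x_k)+(\nabla^2 f(x_k)+\lambda_{k+1}\nabla^2\Omega(x_k))\Delta+\tfrac12(\nabla^3 f(x_k)+\lambda_{k+1}\nabla^3\Omega(x_k))[\Delta,\Delta]$. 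The heart of the proof is to show that the deliberately chosen data of the scheme -- the averaging weights $\tfrac12,\tfrac12$, the updated parameter $\lambda_{k+1}=(1-h+\tfrac{h^2}{2})\lambda_k$, and in particular the \emph{tuned} intermediate value $(1-h+h^2)\lambda_k$ inside $d_{k,2}$ -- force $P$ to collapse to $\tfrac{\lambda_{k+1}}{\lambda_k}g_1$ up to a fourth-order remainder. Concretely I would Taylor-expand $d_{k,2}$ in $h$ about $(x_k,\lambda_k)$ using $v(x,\lambda)=-H(x,\lambda)^{-1}\nabla f(x)$ (so $\partial v$ produces one factor $H_1^{-1}$ and $\partial^2 v$ two), substitute $\Delta=hd_{k,1}-\tfrac{h^2}{2}H_1^{-1}(\cdots)+O(h^3)$, and collect $P$ by powers of $h$; repeatedly using $H_1 d_{k,1}=-\nabla f(x_k)$, the $h^0,h^1,h^2$ contributions assemble exactly into $(1-h+\tfrac{h^2}{2})g_1=\tfrac{\lambda_{k+1}}{\lambda_k}g_1$ (the $\nabla^3$-tensor terms from $(\nabla^2 f+\lambda_{k+1}\nabla^2\Omega)\Delta$ cancelling those from $\tfrac12(\nabla^3 f+\lambda_{k+1}\nabla^3\Omega)[\Delta,\Delta]$), while the $+h^2$ correction in the intermediate parameter is precisely what cancels the surviving $h^3$ contribution.

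Assembling the pieces gives $r_{k+1}\le\tfrac{\lambda_{k+1}}{\lambda_k}r_k+(\text{third-order Taylor remainder})+(\text{fourth-order method remainder})$. The Taylor remainder $\tfrac{L}{6}(1+\lambda_{k+1})\vvt{\Delta}^3$ is bounded via $(1+\lambda_k)\vvt{\Delta}\le 3h(1+G)$ (hence $\vvt{\Delta}\le 3h(1+G)/(1+\lambda_k)$) together with $\lambda_{k+1}\le\lambda_k$, which cancels the $(1+\lambda_{k+1})$ weight and produces the clean $h^3\cdot 3L(1+G)^3$ term; the fourth-order remainder pairs the $O(h^3)$ part of $\Delta$ (i.e. $\tfrac{h^3}{2}$ times the second-order coefficient of $d_{k,2}$, which carries two factors $H_1^{-1}$) against $\vvt{H(x_k,\lambda_{k+1})}\le L(1+\lambda_{k+1})$, giving a factor $(1+\lambda)\vvt{H_1^{-1}}^2\le\tau\,(\mu+\lambda\sigma)^{-1}\le\tau^2$ and hence the $h^4\cdot 2L^3\tau^2(1+G)^4$ term. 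I expect the main obstacle to be the bookkeeping in the third step: carrying $d_{k,2}$ through second order and verifying that \emph{all} of the $h^1,h^2,h^3$ method terms cancel (rather than only the $h^1$ term as in the Euler case), that the cubic-tensor terms cancel, and that each surviving quantity stays attached to the correct power of $H_1^{-1}$ so that the constants emerge as $3L(1+G)^3$ and $2L^3\tau^2(1+G)^4$ without spurious factors of $1/\tilde\mu$.
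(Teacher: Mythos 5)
Your skeleton matches the paper's: your magnitude bound for $d_{k,1}$ (writing $\lambda_k \nabla \Omega(x_k) = \nabla F_{\lambda_k}(x_k) - \nabla f(x_k)$ and using $\sigma \ge 1$, $r_k \le \tilde{\mu}$) is exactly \cref{lm:bound-norm}, your $\vvta{d_{k,2}-d_{k,1}} = \mathcal{O}(h)$ estimate is \cref{lm:diff-ind-reg}, and opening with a second-order expansion of the gradients with cubic remainders is how the paper's proof of \cref{lmm:trapezoid-local} begins. The genuine gap is in your central step. You claim that the second-order Taylor polynomial $P$ collapses to $\tfrac{\lambda_{k+1}}{\lambda_k} g_1$ up to a \emph{fourth}-order remainder, i.e., that all method contributions at orders $h$, $h^2$, and $h^3$ cancel, so that the entire $h^3$ term in \cref{eq:third-err} comes from the gradient Taylor remainder $\tfrac{L}{6}(1+\lambda_{k+1})\vvta{\Delta}^3$. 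That is false, and the paper's bookkeeping shows why: after the exact cancellations (its $(A1)+(B1)+(C1)=0$ and $(D1)+(C2)=(E1)$), several $\mathcal{O}(h^3)$ method remainders survive, namely $(RD) \le \tfrac{h^3}{4} L \vvta{d_1}^2$ (the Lipschitz-Hessian remainder of $\nabla f$ at the Euler predictor point, generated when $\tfrac{h}{2}\tilde{H}_1(d_2-d_1)$ is rewritten via \cref{lm:diff-ind-reg}) and $(R1), (R5) \le \tfrac{h^3}{4}(1+\lambda) L \vvta{d_1}^2 \vvta{d_2}$ (Lipschitz remainders of the Hessian differences against $D^3$). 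These do not cancel for any tuning; the $h^2$ correction in the intermediate parameter $(1-h+h^2)\lambda_k$ only arranges the combination $(B3)+(E1)+(E3)$ into a single $\mathcal{O}(h^4)$ term and pushes $(R3)$ to $\mathcal{O}(h^4)$ — it cannot remove Lipschitz remainders, which exist even for the exactly tuned scheme. Those surviving $h^3$ terms are what get absorbed, together with the gradient remainders, into the constant $3L(1+G)^3$; your accounting, which attributes $h^3$ entirely to $\tfrac{L}{6}(1+\lambda_{k+1})\vvta{\Delta}^3$, would not even recover that constant on its own terms, since $(1+\lambda_k)\vvta{\Delta} \le 3h(1+G)$ gives $\tfrac{L}{6}\cdot 27\, h^3 (1+G)^3 = 4.5\, h^3 L (1+G)^3$ in the worst case.

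Your proposed technique for the cancellation — Taylor-expanding $d_{k,2}$ in $h$ through derivatives of $v(x,\lambda) = -H(x,\lambda)^{-1}\nabla f(x)$ — would also spoil the clean constants, a danger you flag yourself at the end. Differentiating $v$ twice produces remainder coefficients carrying factors of $H^{-1}$; these enter $P$ through the term $(\nabla^2 f(x_k) + \lambda_{k+1}\nabla^2\Omega(x_k))\Delta$ at order $\tfrac{h}{2}\cdot\mathcal{O}(h^2) = \mathcal{O}(h^3)$, so the $h^3$ coefficient would inherit factors of $\tau$ or $1/\tilde{\mu}$, inconsistent with the condition-number-free $3L(1+G)^3$. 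The paper avoids this precisely by never expanding $v$ in $h$: it keeps $d_2$ exact and works with the identity $\tilde{H}_1(d_2-d_1) = \nabla^2 f(x_1)(x_1-x_2) + (\tilde{H}_1-\tilde{H}_2)d_2 + (R)$ with $\vvta{(R)} \le \tfrac{L}{2}\vvta{x_1-x_2}^2$, in which every term stays premultiplied by $\tilde{H}_1$ so no inverse appears at order $h^3$; the only inverse-dependent bound, $\vvta{d_1-d_2} \le 2hL\tau(\vvta{d_1}+\vvta{d_1}^2)$, is deferred to the $h^4 \cdot 2L^3\tau^2(1+G)^4$ term via the quantity $h^2\vvta{d_1-d_2}^2$. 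To repair your argument you would need to replace the expansion of $d_{k,2}$ in $h$ by this exact-identity bookkeeping (or something equivalent), and accept that the $h^3$ constant aggregates both gradient Taylor remainders and predictor-point Lipschitz remainders rather than arising from the former alone.
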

	
	We leave the proof of \cref{lmm:trapezoid-local} in the appendix because of its length and complexity. 
	In the proof, we mainly work with the directional derivatives (see \citet{nesterov2018implementable}, for details) and the accuracy of the Taylor expansion in \cref{lm:tay-exp}. 
	The result in \cref{lmm:trapezoid-local} shows that the trapezoid update scheme in \cref{eq:trapezoid} guarantees a local accumulation of $\mathcal{O}(h^3)$. 
	Moreover, we can derive an $\mathcal{O}(h^2)$ uniform upper bound on the accuracy of all nearly optimal solutions $\lrrr{x_k}$. 
	For all other $\lambda \in [\lambda_{\min}, \lambda_{\max}]$, we implement the linear interpolation to approximate the corresponding nearly-optimal solution. 
	We then provide the formal proof of \cref{thm:err-ctrl-trapezoid}. 

	\begin{proof}[Proof of \cref{thm:err-ctrl-trapezoid}]
	    Since $h - \frac{h^2}{2} \le \frac{1}{K} \log (\frac{\lambda_{\max}}{\lambda_{\min}})$, it holds that 
	    \begin{equation*}
			h \le \min \lrrr{0.1, \frac{\tilde{\mu}}{8L(1+G)}, \frac{\epsilon^{1/2}}{3 L^{1/2} (1+G)^{3/2}}, \frac{\epsilon^{1/3}}{3 \tau^{2/3} L (1+G)^{4/3}}}. 
		\end{equation*}
		Then we show that $r_k := \vvt{\nabla F_{\lambda_k}(x_k)} \le \frac{\epsilon}{2}$ for all $k$ by induction. 
		Suppose $r_k \le \frac{\epsilon}{2}$, then by \cref{lmm:trapezoid-local}, it holds that 
		\begin{equation*}
		    r_{k+1} := \vvt{\nabla F_{\lambda_{k+1}}(x_k)} \le \frac{\lambda_{k+1}}{\lambda_k} \cdot r_k + h^3 \cdot 3 L (1+G)^3 + h^4 \cdot 2 L^3 \tau^2 (1+G)^4 \le \frac{\epsilon}{2}. 
		\end{equation*}
		Therefore, $r_k \le \frac{\epsilon}{2}$ for all $k \in \{0, \dots, K\}$. 
		Suppose $\lambda \in [\lambda_{k+1}, \lambda_k]$, and hence $\hat{x}(\lambda) = \alpha x_k + (1-\alpha) x_{k+1}$ where $\alpha = \frac{\lambda-\lambda_{k+1}}{\lambda_k-\lambda_{k+1}}$. 
		Applying the results in \cref{thm:gen-int-reg}, we have 
		\begin{align*}
			\vvt{f(\hat{x}(\lambda)) + \lambda \hat{x}(\lambda)} & \le \frac{\epsilon}{2} + \frac{L}{8} \max_{k \in [K-1]} \lrrr{(1+\lambda_k) \vvta{x_{k+1}-x_k}^2 + 2h \vta{\xi(\lambda_k)} \vvta{x_{k+1}-x_k}} \\ 
			& \le \frac{\epsilon}{2} + \frac{L}{8} \cdot \lr{9 h^2 (1+G)^2 + 6 h^2 (1+G)} \le \epsilon. 
		\end{align*}
	\end{proof}

\section{Analysis with Inexact Linear Equations Solutions and Second-Order Conjugate Gradient Variants}\label{sec:inexact}
	
    In this section, we present the complexity analysis of the aforementioned methods with the presence of inexact oracle to gradient and Hessian and/or inexact linear equations solutions, as well as variants applying the second-order conjugate gradient (SOCG) type methods. 
	We first consider the case when the gradient and Hessian oracle are inexact and/or the linear equation solver yields approximate solutions. 

	\subsection{Analysis with Inexact Oracle and/or Approximate Solver}
	
    At each iteration of the Euler, trapezoid, and Runge-Kutta method, an essential subroutine is to compute the directions $d_{k, i}$. 
    For example, in the Euler method, $d_k$ is given by formula $d_k = v(x_k, \lambda_k) = -(\nabla^2 f(x_k) + \lambda_{k+1} \nabla^2 \Omega(x_k))^{-1} \nabla f(x_k)$. 
    In most large-scale problems, the computation of the Hessian matrix and solving linear equations exactly will be the computational bottleneck.  
	Therefore, we consider the case with the presence of numerical error, which may be induced by inexact gradient and Hessian oracle, or by the linear equations solver. 
	Nevertheless, we tackle the two types of numerical error together. 
	\newtext{
	\begin{definition}
	    Suppose an exact solution $d_k$ has the form $d_k = - H_k^{-1} g_k$. For any constant $\delta \ge 0$, $\hat{d}_k$ is a $\delta$-approximate direction with respect to $d_k$ if $\vvta{H_k \hat{d}_k + g_k} \le \delta$. Furthermore, the $\delta$-approximate versions of \Cref{alg:euler} or \Cref{alg:trapezoid} are the same as the original algorithms except that they use a $\delta$-approximate direction in each update.  
	\end{definition}
	}

	\newtext{
    Compared with the original update scheme in \cref{alg:euler}, the only difference in the update scheme of the $\delta$-approximate version is that an approximate direction $\hat{d}_k$ is applied at each iteration instead of the exact direction $v(\cdot, \cdot)$. 
    We want to mention that there is no constraint on how the approximate direction $\hat{d}_k$ is generated, and in \Cref{sec:socg} we provide several efficient methods to compute the approximate direction and the corresponding complexity analysis. 
    The following two lemmas characterize the accumulation of local errors of the update scheme in the $\delta$-approximate version of \Cref{alg:euler} and \Cref{alg:trapezoid}. 
    }

	\begin{lemma}\label{lm:approx-euler}
		Suppose \cref{as:lips-hess} holds. 
		Let $\hat{d}_k$ denote an approximate solution to $v(x_k, \lambda_k)$. 
		Let $\delta_k = \lr{\nabla^2 f(x_k) + \lambda_{k+1} \nabla^2 \Omega(x_k)} \hat{d}_k + \nabla f(x_k)$, $r_k = \vvta{\nabla F_{\lambda_k}(x_k)}$, and $r_{k+1} = \vvta{\nabla F_{\lambda_{k+1}}(x_{k+1})}$. 
		Then we have
        \begin{equation}\label{eq:robust-euler-1}
		    r_{k+1} \le \frac{\lambda_{k+1}}{\lambda_k} \cdot r_k + \frac{h^2 L (1+\lambda_{k+1})}{2} \cdot \vvta{\hat{d}_k}^2 + h \vvt{\delta_k}. 
		\end{equation}
		Furthermore, if we set $\lambda_{s+1} = (1-h) \lambda_s$ and $\vvt{\delta_s} \le \delta$ for some scalar $\delta > 0$ and all $s = 0, \dots, k$, it holds that 
		\begin{equation}\label{eq:robust-euler-2}
			r_{k} \le \frac{\lambda_k}{\lambda_0} \cdot r_0 + 2 h \tau L (f(x_0)-f^*) + \delta. 
		\end{equation}
	\end{lemma}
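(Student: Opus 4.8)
The plan is to read \cref{eq:robust-euler-1} as the inexact counterpart of \cref{lm:gen-err-2} and \cref{eq:robust-euler-2} as the inexact counterpart of \cref{prop:global-rk}, carrying the residual $\delta_k$ through each step of those proofs. Throughout I abbreviate $A_k := \nabla^2 f(x_k) + \lambda_{k+1} \nabla^2 \Omega(x_k)$, so that the defining relation for $\delta_k$ becomes $A_k \hat d_k = \delta_k - \nabla f(x_k)$, and I use the update $x_{k+1} = x_k + h \hat d_k$ together with the recursion $\lambda_{k+1} = (1-h)\lambda_k$ of \cref{alg:euler}.

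For \cref{eq:robust-euler-1} I would mimic the three-term split in the proof of \cref{lm:gen-err-2}. Writing $r_{k+1} = \vvt{\nabla f(x_{k+1}) + \lambda_{k+1} \nabla \Omega(x_{k+1})}$ and inserting the first-order Taylor expansions of $\nabla f$ and $\nabla \Omega$ at $x_k$, the triangle inequality bounds $r_{k+1}$ by (i) $\vvt{\nabla f(x_{k+1}) - \nabla f(x_k) - \nabla^2 f(x_k)(x_{k+1}-x_k)}$, (ii) the same expression for $\Omega$ weighted by $\lambda_{k+1}$, and (iii) the residual term $\vvt{\nabla f(x_k) + \lambda_{k+1}\nabla \Omega(x_k) + A_k (x_{k+1}-x_k)}$. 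Terms (i)--(ii) are handled exactly as before by \cref{it:tay-exp-1} of \cref{lm:tay-exp} together with $\vvt{x_{k+1}-x_k} = h\vvt{\hat d_k}$, yielding $\tfrac{L(1+\lambda_{k+1})}{2} h^2 \vvt{\hat d_k}^2$. For (iii) I would substitute $x_{k+1}-x_k = h\hat d_k$ and $A_k \hat d_k = \delta_k - \nabla f(x_k)$ to rewrite the argument as $(1-h)\nabla f(x_k) + \lambda_{k+1}\nabla \Omega(x_k) + h\delta_k$; using $\lambda_{k+1} = (1-h)\lambda_k$ this is $(1-h)\lr{\nabla f(x_k) + \lambda_k \nabla \Omega(x_k)} + h\delta_k$, whose norm is at most $\tfrac{\lambda_{k+1}}{\lambda_k} r_k + h\vvt{\delta_k}$. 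The only change from \cref{lm:gen-err-2} is that the exact Newton residual, which vanished there, is replaced here by the genuinely nonzero $h\delta_k$.

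For \cref{eq:robust-euler-2} I would divide \cref{eq:robust-euler-1} by $\lambda_{k+1}$ and telescope $\tfrac{r_j}{\lambda_j}$ over $j = 0, \dots, k-1$, as in \cref{lm:global-rk,prop:global-rk}, to get $\tfrac{r_k}{\lambda_k} \le \tfrac{r_0}{\lambda_0} + \sum_{j=0}^{k-1} \lrr{\tfrac{h^2 L (1+\lambda_{j+1})}{2\lambda_{j+1}} \vvt{\hat d_j}^2 + \tfrac{h\vvt{\delta_j}}{\lambda_{j+1}}}$. Multiplying by $\lambda_k$, the residual piece is $\sum_{j=0}^{k-1} \tfrac{\lambda_k}{\lambda_{j+1}} h \vvt{\delta_j}$; since $\tfrac{\lambda_k}{\lambda_{j+1}} = (1-h)^{k-1-j}$ and $\vvt{\delta_j} \le \delta$, this is a geometric sum bounded by $h\delta \sum_{i \ge 0}(1-h)^i = \delta$, which is exactly the clean $+\delta$ appearing in \cref{eq:robust-euler-2}. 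The quadratic piece is treated as in \cref{prop:global-rk}: a descent estimate bounds $\vvt{\hat d_j}^2$ by a constant multiple of $f(x_j) - f(x_{j+1})$, after which $\tfrac{1+\lambda_{j+1}}{\mu + \lambda_{j+1}\sigma} \le \tau$ and $\tfrac{\lambda_k}{\lambda_{j+1}} \le 1$ collapse the sum telescopically into $2h\tau L(f(x_0) - f(x_k)) \le 2h\tau L(f(x_0) - f^*)$.

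The main obstacle is the descent estimate that supplies the bound on $\vvt{\hat d_j}^2$. In the exact case (\cref{lm:global-rk}) the step is a true descent step because $d_j = -A_j^{-1}\nabla f(x_j)$, so combining the cubic bound \cref{it:tay-exp-2} of \cref{lm:tay-exp} with the step-size condition \cref{eq:global-h-2} gives $\tfrac{h}{4}(\mu + \lambda_{j+1}\sigma)\vvt{d_j}^2 \le f(x_j) - f(x_{j+1})$. With the approximate direction, expanding $f(x_{j+1})$ and substituting $\nabla f(x_j) = \delta_j - A_j \hat d_j$ leaves an extra cross-term $h\,\delta_j^T \hat d_j$ that is not automatically a decrease. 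The delicate point is to neutralize this term — bounding $\vvt{\hat d_j} \le \tfrac{G+\delta}{\mu + \lambda_{j+1}\sigma}$ via the level-set bound of \cref{as:lips-hess} and using a Young-type inequality to absorb $h\,\delta_j^T \hat d_j$ into the strongly convex quadratic $-h(\mu+\lambda_{j+1}\sigma)\vvt{\hat d_j}^2$ — so that the descent relation survives and the residual's footprint stays confined to the single $+\delta$ already produced by the geometric sum, rather than inflating the $\delta$-dependence. Checking that the step-size restriction inherited from \cref{eq:global-h-2} keeps both the cubic remainder and this cross-term dominated is the crux; the rest is the same telescoping bookkeeping as in \cref{prop:global-rk}.
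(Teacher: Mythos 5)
Your proposal follows exactly the paper's route, and is considerably more detailed than the paper's own proof. For \cref{eq:robust-euler-1}, the paper's argument is precisely your decomposition: it records the identity
\begin{align*}
& \lambda_{k+1}\lr{\nabla\Omega(x_k) + \nabla^2\Omega(x_k)(x_{k+1}-x_k)} + \nabla f(x_k) + \nabla^2 f(x_k)(x_{k+1}-x_k) \\
= \ & \tfrac{\lambda_{k+1}}{\lambda_k}\lr{\lambda_k\nabla\Omega(x_k)+\nabla f(x_k)} + h\delta_k,
\end{align*}
which is your rewriting of term (iii), and then invokes the three-term split and the Taylor bounds of \cref{lm:gen-err-2} verbatim. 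For \cref{eq:robust-euler-2}, the paper's entire proof is the sentence ``applying \cref{eq:robust-euler-1} to \cref{prop:gen-int-reg-exp} implies the result'' --- that is, the same divide-by-$\lambda$, telescope, and geometric-sum bookkeeping you lay out, with $\sum_{j}(1-h)^{k-1-j}h\delta \le \delta$ producing the clean $+\delta$.

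The ``crux'' you flag is therefore a gap in the paper's own justification rather than a defect of your plan: the descent estimate behind \cref{prop:gen-int-reg-exp} (via \cref{lm:global-rk}) is proved only for the exact direction $d_j = v(x_j,\lambda_{j+1})$, and the cross-term $h\,\delta_j^T\hat d_j$ that appears once one substitutes $\nabla f(x_j) = \delta_j - \lr{\nabla^2 f(x_j)+\lambda_{j+1}\nabla^2\Omega(x_j)}\hat d_j$ is never addressed anywhere in the paper. Your Young-inequality repair is the natural one, but be aware that it does not keep the inexactness confined to the single $+\delta$: absorbing $h\delta\vvta{\hat d_j}$ into $-\tfrac{h}{2}(\mu+\lambda_{j+1}\sigma)\vvta{\hat d_j}^2$ leaves a per-step remainder of order $h\delta^2/(\mu+\lambda_{j+1}\sigma)$, which accumulates over the $K \approx T/h$ steps into an additional term of order $h\tau L T\delta^2/(\mu+\lambda_{\min}\sigma)$ on the right-hand side of \cref{eq:robust-euler-2}, and it also allows iterates to exceed the level set $\{x : f(x)\le f(x_0)\}$ by $O(T\delta^2)$, which matters for invoking the gradient bound $G$. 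This surplus is harmless where the lemma is actually used (\cref{crl:err-ctrl-euler} takes $\delta = \epsilon/4$ with $h\tau L T = O(\epsilon/(f(x_0)-f^*))$, so the surplus is higher order in $\epsilon$), but \cref{eq:robust-euler-2} as literally stated needs either this extra term appended or an explicit smallness condition tying $h$ and $\delta$ to $\mu+\lambda_{\min}\sigma$.
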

	\begin{proof}
		First it holds that 
		\begin{align*}
		    & \lambda_{k+1} \lr{\nabla \Omega(x_k) + \nabla^2 \Omega(x_k) (x_{k+1}-x_k)} + \nabla f(x_k) + \nabla^2 f(x_k) (x_{k+1}-x_k) \\ 
		    = & \frac{\lambda_{k+1}}{\lambda_k} (\lambda_k \nabla \Omega(x_k) + \nabla f(x_k)) + h \cdot \delta_k.
		\end{align*}
		Then apply same technique as in \cref{lm:gen-err-2} we show that \cref{eq:robust-euler-1} holds. 
		Also, applying \cref{eq:robust-euler-1} to \cref{prop:gen-int-reg-exp} implies the result in \cref{eq:robust-euler-2}. 
	\end{proof}
	
	\begin{lemma}\label{lm:robust-trap}
		Suppose $r_k = \vvt{\nabla F_{\lambda_k}(x_k)} \le \tilde{\mu}$. 
		Let $\delta_{k,1}, \delta_{k,2}$ denote the residual of the approximate directions $\hat{d}_{k,1}, \hat{d}_{k,2}$ with $\vvt{\delta_{k,1}}, \vvt{\delta_{k,2}} \le \tilde{\mu}$. 
		Then, it holds that 
		\begin{equation}\label{eq:robust-trap-0}
			r_{k+1} \le \frac{\lambda_{k+1}}{\lambda_k} \cdot r_k + h^3 \cdot 3 L (2+G)^3 + h^4 \cdot 2 L^3 \tau^2 (2+G)^4 + \frac{h}{2} \vvt{\delta_{k,1}-\delta_{k,2}} + \frac{h^2}{2} \vvt{\delta_{k,1}}. 
		\end{equation}
	\end{lemma}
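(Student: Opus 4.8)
The plan is to follow the proof of the exact trapezoid local-error bound \cref{lmm:trapezoid-local} step by step, but to carry the two residual vectors $\delta_{k,1},\delta_{k,2}$ through every place where the exact linear-system relations were previously invoked. Concretely, the exact argument uses $H_{k,1} d_{k,1} = -\nabla f(x_k)$ and $H_{k,2} d_{k,2} = -\nabla f(\hat{y}_k)$, where $H_{k,1} = \nabla^2 f(x_k) + \lambda_k \nabla^2 \Omega(x_k)$, $H_{k,2} = \nabla^2 f(\hat{y}_k) + (1-h+h^2)\lambda_k \nabla^2 \Omega(\hat{y}_k)$, and $\hat{y}_k := x_k + h \hat{d}_{k,1}$. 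By the definition of a $\delta$-approximate direction these relations become $H_{k,1}\hat{d}_{k,1} = -\nabla f(x_k) + \delta_{k,1}$ and $H_{k,2}\hat{d}_{k,2} = -\nabla f(\hat{y}_k) + \delta_{k,2}$, so wherever a $-\nabla f$ term was substituted there is now an additive residual, and the whole task is to collect these extra pieces.

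First I would re-derive the direction-norm bound. From $\hat{d}_{k,i} = H_{k,i}^{-1}(-\nabla f(\cdot) + \delta_{k,i})$, the fact that $H_{k,i}$ has minimum eigenvalue at least $\tilde{\mu}$ (using $\lambda \ge \lambda_{\min}$ and $\sigma \ge 1$ from \cref{as:third-lips}), the gradient bound $\vvt{\nabla f(\cdot)} \le G$ from \cref{as:lips-hess}, and the hypothesis $\vvt{\delta_{k,i}} \le \tilde{\mu}$, I get $\vvt{\hat{d}_{k,i}} \le (G + \tilde{\mu})/\tilde{\mu}$. After clearing the strong-convexity normalization this simply replaces the gradient budget $G$ of the exact proof by $G+1$, which is exactly what turns $(1+\lambda_k)\vvt{x_k - x_{k+1}} \le 3h(1+G)$ into $(1+\lambda_k)\vvt{x_k - x_{k+1}} \le 3h(2+G)$ and, propagating through the third-order Taylor estimates of \cref{lm:tay-exp}, promotes every $(1+G)$ in the discretization error of \cref{lmm:trapezoid-local} to $(2+G)$, producing the $h^3 \cdot 3L(2+G)^3 + h^4 \cdot 2L^3 \tau^2 (2+G)^4$ terms of \cref{eq:robust-trap-0}.

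Next I would Taylor-expand $\nabla F_{\lambda_{k+1}}(x_{k+1})$ about $x_k$ along $s := \tfrac{h}{2}(\hat{d}_{k,1} + \hat{d}_{k,2})$, split the regularized Hessian as $\nabla^2 f(x_k) + \lambda_{k+1}\nabla^2\Omega(x_k) = H_{k,1} - (h - \tfrac{h^2}{2})\lambda_k \nabla^2\Omega(x_k)$, and substitute the two residual relations to eliminate $\nabla f(x_k)$ and $\nabla f(\hat{y}_k)$ (also expanding $\nabla f(\hat{y}_k) = \nabla f(x_k) + h \nabla^2 f(x_k)\hat{d}_{k,1} + \mathcal{O}(h^2)$ via \cref{lm:tay-exp}). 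The coefficients $\lambda_{k+1} = (1-h+\tfrac{h^2}{2})\lambda_k$ and $\tilde\lambda_k = (1-h+h^2)\lambda_k$ are tuned precisely so that all the residual-free first- and second-order terms reassemble into $\tfrac{\lambda_{k+1}}{\lambda_k}\nabla F_{\lambda_k}(x_k)$ plus the third-order remainder already handled. The residual-bearing pieces are what remain: after the trapezoidal averaging the two first-order stage contributions group into the term $\tfrac{h}{2}\vvt{\delta_{k,1}-\delta_{k,2}}$ (the difference reflecting that the average pits the two stage residuals against each other), while eliminating $\nabla^2 f(x_k)\hat{d}_{k,1}$ through the first residual relation inside the $\lambda$-rescaling correction contributes the second-order term $\tfrac{h^2}{2}\vvt{\delta_{k,1}}$. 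Taking norms and applying the triangle inequality yields \cref{eq:robust-trap-0}.

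The main obstacle is exactly this last bookkeeping: keeping the residual contributions cleanly separated from the genuine discretization error even though the two are entangled through the intermediate point $\hat{y}_k = x_k + h\hat{d}_{k,1}$, which depends on the \emph{inexact} first direction. I would control this coupling by expanding the Hessians and $\nabla f$ at $\hat{y}_k$ around the exact intermediate point $y_k = x_k + h d_{k,1}$ using the uniform Lipschitz estimate of \cref{prop:ode} together with \cref{lm:tay-exp}, thereby showing that the second-stage perturbation caused by using $\hat{d}_{k,1}$ in place of $d_{k,1}$ has size $\mathcal{O}(h\vvt{\delta_{k,1}}/\tilde{\mu})$ and hence only feeds into the higher-order residual term rather than the leading $\mathcal{O}(h)$ contribution. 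The remaining routine check is that the hypotheses $r_k \le \tilde{\mu}$ and $\vvt{\delta_{k,i}} \le \tilde{\mu}$ keep $x_k$, $\hat{y}_k$, and $x_{k+1}$ inside the region where the gradient bound $G$ and \cref{as:third-lips} apply, which follows as in the exact analysis.
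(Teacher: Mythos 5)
Your proposal follows the paper's proof essentially exactly: the paper likewise re-derives \cref{lm:bound-norm} with the residual (giving $(1+\lambda)\vvt{\hat{d}_{k,1}} \le 2(G+1+\vvt{\delta_{k,1}}/\tilde{\mu}) \le 2(G+2)$, hence the promotion $(1+G)\to(2+G)$), inserts $\delta_{k,1}-\delta_{k,2}$ into the identity of \cref{lm:diff-ind-reg}, and then carries the residuals through \cref{eq:local-trap-1,eq:local-trap-2,eq:local-trap-3} in the proof of \cref{lmm:trapezoid-local}, which is precisely the source of the $\frac{h}{2}\vvt{\delta_{k,1}-\delta_{k,2}}$ and $\frac{h^2}{2}\vvt{\delta_{k,1}}$ terms you describe. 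The one point where you diverge is your ``main obstacle,'' which is in fact not an obstacle: because $\delta_{k,2}$ is by definition the residual of the linear system evaluated at the inexact intermediate point $\hat{y}_k = x_k + h\hat{d}_{k,1}$ itself, all Taylor expansions can be taken directly between the points actually visited ($x_k$, $\hat{y}_k$, $x_{k+1}$), so the comparison with the exact intermediate point $y_k = x_k + h d_{k,1}$ via \cref{prop:ode} in your last paragraph is unnecessary and is nowhere used in the paper's argument.
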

	\begin{proof}
		We will follow the idea in \cref{lm:bound-norm,lm:diff-ind-reg,lmm:trapezoid-local}. 
		Recall the result in \cref{lm:bound-norm}, since $\tilde{H}_1 d_1 = -\nabla f(x_1) + \delta_1$, it holds that $(1+\lambda) \vvt{d_1} \le 2(G+1+\vvt{\delta_1}/\tilde{\mu})$. 
		Also, the result in \cref{lm:diff-ind-reg} becomes 
		\begin{equation*}
			\vvt{\tilde{H}_1 (d_2 - d_1) - \nabla^2 f(x_1) (x_1-x_2) - (\tilde{H}_1 - \tilde{H}_2) d_2 + \delta_1 - \delta_2} \le \frac{L}{2} \vvt{x_1-x_2}^2,  
		\end{equation*}
		where $\tilde{H}_1 = \nabla^2 f(x_1) + \lambda_1 \nabla^2 \Omega(x_1)$ and $\tilde{H}_2 = \nabla^2 f(x_2) + \lambda_2 \nabla^2 \Omega(x_2)$. 
		Now we modify the proof of \cref{lmm:trapezoid-local} to get \cref{eq:robust-trap-0}. 
		Then the right-hand side of \cref{eq:local-trap-1} becomes $-h \delta_1$. 
		Also, the right-hand side of \cref{eq:local-trap-2} becomes $\frac{h}{2}(\delta_1-\delta_2)$ and the right-hand side of \cref{eq:local-trap-3} becomes $\frac{h^2}{2} \lambda \nabla^2 \Omega(x) d_1 + \frac{h^2}{2} \delta_1$. 
	\end{proof}
			
	The following two corollaries provide the complexity analysis of the $\delta$-approximate version of \Cref{alg:euler} and \Cref{alg:trapezoid}. 
			
	\begin{corollary}\label{crl:err-ctrl-euler}
		Suppose that \cref{as:lips-hess} holds, and suppose that the initial point $x_0$ satisfies $\vvt{\nabla F_{\lambda_{\max}}(x_0)} \le \frac{\epsilon}{4}$. Let $T := \log(\lambda_{\max}/\lambda_{\min})$, let $\epsilon \in (0, \mu + \lambda_{\min} \sigma]$ be the desired accuracy and let \begin{equation*}
	        K_{\tn{E, approx}} := \left\lceil \max \lrrr{2T, \frac{\sqrt{LG} \tau T}{\sqrt{3} }, \frac{8 (f(x_0) - f^\ast) \tau L T}{\epsilon}, \frac{4 \sqrt{L} (\tau (G + \epsilon) + 1) T}{\sqrt{\epsilon}}} \right\rceil. 
    	\end{equation*}
        If the number of iterations $K$ satisfies $K \geq K_{\tn{E, approx}}$ and approximate directions $\hat{d}_k$ are all $\frac{\epsilon}{4}$-approximate, the $\delta$-approximate version of \Cref{alg:euler} returns an $\epsilon$-accurate solution path.
	\end{corollary}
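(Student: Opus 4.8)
The plan is to mirror the proof of \cref{thm:err-ctrl-exp}, splitting the target accuracy $\epsilon$ into four equal budgets of $\epsilon/4$: one absorbed by the initial residual $r_0 = \vvt{\nabla F_{\lambda_{\max}}(x_0)}$, one by the accumulated inexactness $\delta$, one by the discretization term $2h\tau L(f(x_0)-f^\ast)$ at the grid points, and one by the additional error from linear interpolation. Concretely, I would bound the path accuracy as $\vvt{\nabla F_\lambda(\hat x(\lambda))} \le r_{\max} + (\text{interpolation term})$ via \cref{thm:gen-int-reg}, where $r_{\max} = \max_k r_k$ is controlled by the inexact grid-point estimate \cref{eq:robust-euler-2} of \cref{lm:approx-euler}, and then verify that the stated lower bound $K_{\tn{E, approx}}$ on $K$ drives each of the two error terms below $\epsilon/4$.

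First I would record that the step-size $h = 1 - (\lambda_{\min}/\lambda_{\max})^{1/K} = 1 - e^{-T/K}$ satisfies $h \le T/K$, so that the conditions $K \ge 2T$ and $K \ge \frac{\sqrt{LG}\,\tau T}{\sqrt 3}$ force $h$ to satisfy the step-size requirement \cref{eq:global-h-2}, exactly as in the proof of \cref{thm:err-ctrl-exp}. Then I would invoke \cref{eq:robust-euler-2} with $\delta = \epsilon/4$ (valid since every $\hat d_k$ is $\tfrac{\epsilon}{4}$-approximate, so $\vvt{\delta_k}\le\epsilon/4$). Using $\lambda_k/\lambda_0 \le 1$ and the hypothesis $r_0 \le \epsilon/4$, together with $K \ge \frac{8(f(x_0)-f^\ast)\tau L T}{\epsilon}$ to make $2h\tau L(f(x_0)-f^\ast) \le \epsilon/4$, this yields the uniform grid bound $r_{\max} \le \epsilon/4 + \epsilon/4 + \epsilon/4 = \tfrac{3\epsilon}{4}$.

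The one genuinely new ingredient is the interpolation term, where the inexact direction inflates the step norm. Writing the update as $x_{k+1}-x_k = h\hat d_k$ with $\hat d_k = -(\nabla^2 f(x_k)+\lambda_{k+1}\nabla^2\Omega(x_k))^{-1}(\nabla f(x_k) - \delta_k)$, I would bound $\vvt{\hat d_k} \le \frac{G+\delta}{\mu+\lambda_{k+1}\sigma}$ using $\vvt{\nabla f(x_k)}\le G$ on the level set and $\vvt{\delta_k}\le\delta$; this is precisely where the factor $G$ of \cref{prop:gen-int-reg-exp} is replaced by $G+\delta \le G+\epsilon$. Feeding this bound into \cref{thm:gen-int-reg} (as in the proof of \cref{prop:gen-int-reg-exp}) gives an interpolation error of at most $\frac{h^2 L}{8}(\tau(G+\delta)+1)^2 \le \frac{h^2 L}{8}(\tau(G+\epsilon)+1)^2$, and the condition $K \ge \frac{4\sqrt L(\tau(G+\epsilon)+1)T}{\sqrt\epsilon}$ renders this $\le \epsilon/4$. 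Adding the two contributions gives $\vvt{\nabla F_\lambda(\hat x(\lambda))}\le \tfrac{3\epsilon}{4}+\tfrac{\epsilon}{4}=\epsilon$ for all $\lambda\in[\lambda_{\min},\lambda_{\max}]$.

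The main obstacle is ensuring that the telescoping behind \cref{eq:robust-euler-2} remains valid once the inexact step replaces the exact one: the argument still hinges on the descent property $f(x_{k+1})\le f(x_k)$, which must now be re-established for the perturbed iterate whose direction has norm controlled by $G+\delta$ rather than $G$. This is exactly why the hypothesis $\epsilon \le \mu+\lambda_{\min}\sigma$ is imposed — it keeps $\delta=\epsilon/4$ small relative to the strong-convexity modulus $\mu+\lambda_{\min}\sigma$, so the perturbed directions stay comparable to the exact ones and the step-size condition \cref{eq:global-h-2} continues to guarantee descent. Carrying the $G\mapsto G+\delta$ inflation consistently through both the grid-point accumulation and the interpolation estimate, while checking it does not spoil the step-size condition, is the only delicate bookkeeping; everything else is a direct transcription of the exact-case proof.
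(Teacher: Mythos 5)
Your proposal matches the paper's proof essentially step for step: the same $\epsilon/4$ budget split, the same verification that $h \le T/K$ satisfies \cref{eq:global-h-2}, the same use of \cref{eq:robust-euler-2} with $\delta = \epsilon/4$ to get $r_k \le \frac{3\epsilon}{4}$ at the grid points, and the same $G \mapsto G+\epsilon$ inflation fed into \cref{thm:gen-int-reg} for the interpolation term --- your direct bound $\vvt{\hat{d}_k} \le \frac{G+\delta}{\mu+\lambda_{k+1}\sigma}$ is equivalent to the paper's decomposition $\vvt{d_k} + \vvt{d_k - \hat{d}_k}$ (you even obtain a slightly sharper constant, $\frac{h^2L}{8}$ versus the paper's $\frac{h^2L}{2}$, both of which clear the $\epsilon/4$ threshold). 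The descent/telescoping concern you flag in the final paragraph is genuine but is handled no more explicitly in the paper, whose proof of \cref{lm:approx-euler} simply cites \cref{prop:gen-int-reg-exp}, so your account is at least as complete as the paper's own.
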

	\begin{proof}
	    Since the step-size $h = 1 - (\frac{\lambda_{\min}}{\lambda_{\max}})^{\frac{1}{K}} \le \frac{T}{K}$, combining \cref{eq:robust-euler-2}, we have $r_k \le \frac{3 \epsilon}{4}$, for all $k \in \{0, \dots, K\}$. 
	    For linear interpolation error, we have
	    \begin{align*}
	        & \frac{L}{8} \cdot \max_{k \in [K-1]} \{(1+\lambda_k) \vvta{x_{k+1}-x_k}^2 + 2h \vta{\xi(\lambda_k)} \vvta{x_{k+1}-x_k}\} \\ 
	        \le & \frac{h^2 L}{2} \cdot \max_{k \in [K-1]} \{(1+\lambda_k) (\vvta{d_k}^2 + \vvta{d_k - \hat{d}_k}^2) + \vta{\xi(\lambda_k)} (\vvta{d_k} + \vvta{d_k - \hat{d}_k})\} \\ 
	        \le &  \frac{h^2 L}{2} \cdot (\tau (G + \epsilon) + 1)^2 \le \frac{\epsilon}{4}. 
	    \end{align*}
	    Applying the above inequality to \cref{thm:gen-int-reg} completes the proof. 
	\end{proof}

	\begin{corollary}\label{crl:err-ctrl-trapezoid}
	    Suppose \cref{as:third-lips} holds, let $\tilde{\mu} := \mu + \lambda_{\min} \sigma$, let $\epsilon \in (0, \tilde{\mu}]$ be the desired accuracy, suppose that the initial point $x_0$ satisfies $\vvt{\nabla F_{\lambda_{\max}}(x_0)} \le \frac{\epsilon}{4}$, let $T := 1.1\log(\lambda_{\max}/\lambda_{\min})$, and let
    	\begin{equation*}
			 K_{\tn{tr, approx}} := \left\lceil \max \lrrr{10T, \frac{8LT(2+G)}{\tilde{\mu}}, \frac{6 L^{1/2} (2+G)^{3/2} T}{\epsilon^{1/2}}, \frac{6 L \tau^{2/3} (2+G)^{4/3} T}{\epsilon^{1/3}}} \right\rceil. 
        \end{equation*}
	    If the number of iterations $K$ satisfies $K \geq K_{\tn{tr, approx}}$ and all approximate directions $\hat{d}_{k,1}, \hat{d}_{k,2}$ are $\frac{\epsilon}{4}$-approximate, the $\delta$-approximate version of \Cref{alg:trapezoid} returns an $\epsilon$-accurate solution path.
	\end{corollary}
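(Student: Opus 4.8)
The plan is to follow the template of the exact analysis in the proof of \cref{thm:err-ctrl-trapezoid}, replacing the clean local recursion of \cref{lmm:trapezoid-local} by its inexact counterpart \cref{lm:robust-trap}, and to track the two extra error terms $\tfrac{h}{2}\vvt{\delta_{k,1}-\delta_{k,2}}$ and $\tfrac{h^2}{2}\vvt{\delta_{k,1}}$ coming from the $\tfrac{\epsilon}{4}$-approximate directions. As in \cref{crl:err-ctrl-euler}, I would split the budget as ``grid-point accuracy $\le \tfrac{3\epsilon}{4}$'' plus ``interpolation error $\le \tfrac{\epsilon}{4}$.'' The first step is to convert the iteration-count hypothesis $K \ge K_{\tn{tr, approx}}$ into step-size bounds. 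Since \cref{alg:trapezoid} uses $\lambda_{k+1} = (1-h+\tfrac{h^2}{2})\lambda_k$, we have $(1-h+\tfrac{h^2}{2})^K = \lambda_{\min}/\lambda_{\max}$, so $\log(1-u)\le -u$ gives $h - \tfrac{h^2}{2} \le \tfrac{1}{K}\log(\lambda_{\max}/\lambda_{\min})$; combined with $h \le 2(h-\tfrac{h^2}{2})$ for $h\le 1$, the four terms in $K_{\tn{tr, approx}}$ translate into
\begin{equation*}
    h \le \min\lrrr{0.1,\ \tfrac{\tilde{\mu}}{8L(2+G)},\ \tfrac{\epsilon^{1/2}}{3L^{1/2}(2+G)^{3/2}},\ \tfrac{\epsilon^{1/3}}{3\tau^{2/3}L(2+G)^{4/3}}}.
\end{equation*}

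Next I would run an induction to establish $r_k \le \tfrac{3\epsilon}{4}$ for every $k$. The base case holds because $r_0 = \vvt{\nabla F_{\lambda_{\max}}(x_0)} \le \tfrac{\epsilon}{4}$. For the inductive step, note that $r_k \le \tfrac{3\epsilon}{4} \le \tilde{\mu}$ and $\vvt{\delta_{k,i}}\le \tfrac{\epsilon}{4}\le \tilde{\mu}$, so the hypotheses of \cref{lm:robust-trap} are met; substituting $\vvt{\delta_{k,1}-\delta_{k,2}} \le \tfrac{\epsilon}{2}$ and $\vvt{\delta_{k,1}}\le \tfrac{\epsilon}{4}$ yields
\begin{equation*}
    r_{k+1} \le \tfrac{\lambda_{k+1}}{\lambda_k}\, r_k + h^3 \cdot 3L(2+G)^3 + h^4\cdot 2L^3\tau^2(2+G)^4 + \tfrac{h\epsilon}{4} + \tfrac{h^2\epsilon}{8}.
\end{equation*}
Using $\tfrac{\lambda_{k+1}}{\lambda_k} = 1 - h + \tfrac{h^2}{2}$ and $r_k \le \tfrac{3\epsilon}{4}$, closing the induction reduces to checking that the per-step error is at most the contraction budget $(h-\tfrac{h^2}{2})\tfrac{3\epsilon}{4}$. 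After cancelling the $\tfrac{h\epsilon}{4}$ direction term against this budget, the remaining room is $O(h\epsilon)$, and this is precisely where the step-size bounds above are invoked, since they force $h^3\cdot 3L(2+G)^3$ and $h^4\cdot 2L^3\tau^2(2+G)^4$ to be $O(h\epsilon)$ with small enough constants.

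Finally I would bound the interpolation error via \cref{thm:gen-int-reg}. The displacement estimate $(1+\lambda_k)\vvt{x_{k+1}-x_k} \le 3h(2+G)$ — the inexact analogue of the bound in \cref{lmm:trapezoid-local}, obtained from $(1+\lambda_k)\vvt{\hat{d}_{k,i}} \le 2(2+G)$ as in the proof of \cref{lm:robust-trap} — controls both $(1+\lambda_k)\vvt{x_{k+1}-x_k}^2 \le 9h^2(2+G)^2$ and $2h\vt{\xi(\lambda_k)}\vvt{x_{k+1}-x_k} \le 6h^2(2+G)$ (recall $\xi(\lambda_k)=-\lambda_k$), so the interpolation error is at most $\tfrac{L}{8}\lr{9h^2(2+G)^2 + 6h^2(2+G)}$, which the $\epsilon^{1/2}$ step-size bound drives below $\tfrac{\epsilon}{4}$. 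Combining this with $r_{\max}\le\tfrac{3\epsilon}{4}$ then gives the claimed $\epsilon$-accurate path. I expect the induction step to be the main obstacle: unlike the exact case, the direction error contributes a term of order $h\epsilon$ that does not shrink relative to the contraction budget as $h\to 0$, so the argument hinges on the precise calibration of the $\tfrac{3\epsilon}{4}$ threshold against the $\tfrac{\epsilon}{4}$-approximate directions, and the constants must be tracked carefully to leave enough room for the $O(h^3)$ and $O(h^4)$ discretization errors.
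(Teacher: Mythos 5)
Your proposal is correct and takes essentially the same approach as the paper: both invoke \cref{lm:robust-trap} with the $\tfrac{\epsilon}{4}$-approximate directions to obtain the uniform grid-point bound $r_k \le \tfrac{3\epsilon}{4}$, and then \cref{thm:gen-int-reg} to keep the linear-interpolation error below $\tfrac{\epsilon}{4}$. The only cosmetic difference is that you close the grid-point bound by induction against the contraction factor $1-h+\tfrac{h^2}{2}$ (mirroring the paper's proof of \cref{thm:err-ctrl-trapezoid}), whereas the paper's own proof of this corollary sums the geometric series to get $r_{k+1} \le r_0 + E/(h-\tfrac{h^2}{2})$ for the per-step error $E$ --- the two arguments are equivalent up to constants.
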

	\begin{proof}
	    Using \cref{eq:robust-trap-0} for $r_{k+1}, \dots, r_1$, we have 
	    \begin{align*}
	        r_{k+1} & \le r_0 + \frac{3 h^3 L (2+G)^3 + 2 h^4 L^3 \tau^2 (2+G)^4 + \frac{h}{2} \vvt{\delta_{k,1}-\delta_{k,2}} + \frac{h^2}{2} \vvt{\delta_{k,1}}}{1 - \lr{1 - h + \frac{h^2}{2}}} \\ 
	        & \le \frac{\epsilon}{4} + \frac{3 h^2 L (2+G)^3 + 2 h^3 L^3 \tau^2 (2+G)^4 + \frac{\epsilon}{4} + \frac{h \epsilon}{8}}{1 - \frac{h}{2}} \le \frac{3 \epsilon}{4}. 
	    \end{align*}
        Also, since $\vvta{x_{k+1} - x_k} \le h (\vvta{d_k} + \frac{\vvta{\hat{d}_{k, 1}} + \vvta{\hat{d}_{k, 2}}}{2 (\mu + \lambda_k \sigma)})$, it holds that 
		\begin{align*}
			\vvt{f(\hat{x}(\lambda)) + \lambda \hat{x}(\lambda)} & \le \frac{3 \epsilon}{4} + \frac{L}{8} \cdot \max_{k \in [K-1]} \lrrr{(1+\lambda_k) \vvta{x_{k+1}-x_k}^2 + 2h \lambda_k \vvta{x_{k+1}-x_k}} \\ 
			& \le \frac{3 \epsilon}{4} + \frac{L}{4} \cdot \lr{h^2 \lr{9 (1+G)^2 + 2 \tau \epsilon} + h^2 (3 (1+G) + \epsilon)} \le \epsilon. 
		\end{align*}
	\end{proof}

    \Cref{crl:err-ctrl-euler,crl:err-ctrl-trapezoid} provide the complexity analysis of the Euler method and the trapezoid method with the presence of approximate directions. 
    We can see that when the residuals of approximate directions have a uniform upper bound over all iterations, the $\delta$-approximate versions of  \cref{alg:euler,alg:trapezoid} have complexity of the same order as the original algorithms, respectively, which require exact directions. 
    Moreover, \cref{crl:err-ctrl-euler,crl:err-ctrl-trapezoid} only require $\epsilon$-approximate directions but no assumptions about how the approximate directions are generated. 
    It provides flexibility in the choice of an approximate oracle to compute approximate directions.

	\subsection{Second-Order Conjugate Gradient Variants}\label{sec:socg}
    Following the complexity analysis, we apply the conjugate gradient method to solve the sub-problem, \ie to compute a $\delta$-approximate direction of $v(\cdot, \cdot)$. 
    To measure the efficiency of second-order conjugate gradient type algorithms, we measure the computational complexity by the total number of calls to both gradient and Hessian-vector product oracles. 
	
	Now we apply the conjugate gradient method as an approximate oracle to compute the approximate direction $\hat{d}_k$ at each iteration. 
	We use the $\delta$-approximate version of the Euler method as an example. 
    At iteration $k$, the algorithm requires an approximate solution $\hat{d}_k$ satisfying $\Vert{H_k \hat{d}_k + g_k}\Vert_2 \le \delta$ where $H_k := \nabla^2 f(x_k) + \lambda_{k+1} \nabla^2 \Omega(x_k)$ and $g_k := \nabla f(x_k)$. 
    We apply the conjugate gradient to approximately solve the equation $H_k \hat{d}_k + g_k = 0$ and set the initial guess to be the approximate direction $\hat{d}_{k-1}$ in the last iteration. 
    Herein we provide the complexity analysis of Euler-CG method and trapezoid-CG method. 
    
    \begin{theorem}\label{thm:err-ctrl-cg}
    \newtext{
        Under the same conditions as \Cref{thm:err-ctrl-exp}, let $\delta \gets \epsilon / 4$, and let $K_{\tn{E-CG}} \sim \tilde{\mathcal{O}} (L^{3/2} \tau^{3/2} (f(x_0)-f^*) T / \epsilon)$. 
	    If the total number of iterations $K$ satisfies $K \geq K_{\tn{E-CG}}$, then the Euler-CG method returns an $\epsilon$-accurate solution path. 
	    Also, suppose that the assumption in \Cref{thm:err-ctrl-trapezoid} holds, let $\delta \gets \epsilon / 4$, and let $K_{\tn{tr-CG}} \sim \tilde{\mathcal{O}} (L \tau^{1/2} (2+G)^{3/2} T / \epsilon^{1/2})$. 
	    If the total number of iterations $K$ satisfies $K \geq K_{\tn{tr-CG}}$, then the trapezoid-CG method returns an $\epsilon$-accurate solution path. 
    }
    \end{theorem}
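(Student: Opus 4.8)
The plan is to decouple the total complexity into an \emph{outer} count, the number of Euler or trapezoid steps, and an \emph{inner} count, the number of conjugate gradient (CG) iterations (equivalently, Hessian-vector products) used to produce each approximate direction. \Cref{crl:err-ctrl-euler,crl:err-ctrl-trapezoid} already certify that, with $\frac{\epsilon}{4}$-approximate directions, the $\delta$-approximate versions of \cref{alg:euler,alg:trapezoid} return an $\epsilon$-accurate path in $K_{\tn{E, approx}} = \mathcal{O}((f(x_0)-f^*)\tau L T/\epsilon)$ and $K_{\tn{tr, approx}} = \mathcal{O}(L^{1/2}(2+G)^{3/2}T/\epsilon^{1/2})$ outer steps, respectively. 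It thus suffices to show that each inner solve to residual $\delta = \epsilon/4$ takes only $\tilde{\mathcal{O}}(\sqrt{L\tau})$ CG iterations; multiplying the inner and outer counts (and doubling for the two direction solves per trapezoid step) recovers exactly the stated $K_{\tn{E-CG}}$ and $K_{\tn{tr-CG}}$.

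First I would bound the condition number of every linear system uniformly. By $\mu$-strong convexity of $f(\cdot)$, $\sigma$-strong convexity of $\Omega(\cdot)$, and the operator-norm bounds $\vvt{\nabla^2 f(\cdot)}, \vvt{\nabla^2 \Omega(\cdot)} \le L$ from \cref{as:lips-hess}, the coefficient matrix $H_k = \nabla^2 f(x_k) + \lambda_{k+1}\nabla^2\Omega(x_k)$ satisfies $(\mu+\lambda_{k+1}\sigma) I \preceq H_k \preceq L(1+\lambda_{k+1}) I$, so its condition number is at most $\frac{L(1+\lambda_{k+1})}{\mu+\lambda_{k+1}\sigma} \le L\tau$; the same bound holds for the two matrices defining $d_{k,1}$ and $d_{k,2}$ in the trapezoid scheme. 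The classical CG error bound then gives that driving the residual from an initial value $R_0$ down to $\delta$ costs $\mathcal{O}(\sqrt{L\tau}\,\log(R_0/\delta))$ Hessian-vector products.

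The crux, and the step I expect to be the main obstacle, is the \emph{warm-start} analysis showing that $R_0$ is already small. I would initialize CG at step $k$ with the previous approximate direction $\hat{d}_{k-1}$ and bound $R_0 = \vvt{H_k \hat{d}_{k-1} + g_k}$ via the decomposition $H_k\hat{d}_{k-1} + g_k = (H_k - H_{k-1})\hat{d}_{k-1} + (g_k - g_{k-1}) + (H_{k-1}\hat{d}_{k-1} + g_{k-1})$, whose last term is at most $\delta$ by construction. For the remaining terms I would combine $\vvt{x_k - x_{k-1}} = h\vvt{\hat{d}_{k-1}} = \mathcal{O}(h)$ — using that $\vvt{\hat{d}_{k-1}} \le (G+\delta)/(\mu+\lambda_k\sigma)$, with $\vvt{g_k} \le G$ since the iterates remain in the level set $\{x : f(x) \le f(x_0)\}$ — with the Lipschitz continuity of $\nabla f$, $\nabla^2 f$, $\nabla^2 \Omega$ and the bound $\vt{\lambda_{k+1}-\lambda_k} = \mathcal{O}(h\lambda_k)$ to obtain $\vvt{H_k - H_{k-1}} = \mathcal{O}(h)$ and $\vvt{g_k - g_{k-1}} = \mathcal{O}(h)$, hence $R_0 = \mathcal{O}(h) + \delta$. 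For the trapezoid method the analogous estimate holds for both solves: $d_{k,2}$ is warm-started from the freshly computed $d_{k,1}$ and $d_{k+1,1}$ from $d_{k,2}$, since in each case the defining point and penalty differ only by $\mathcal{O}(h)$.

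Finally, the warm start keeps $R_0 = \mathcal{O}(h) + \delta$, so with $\delta = \epsilon/4$ the CG logarithmic factor $\log(R_0/\delta)$ is at most polylogarithmic in $1/\epsilon$ and the problem constants $L, \tau, G$; each inner solve therefore costs $\tilde{\mathcal{O}}(\sqrt{L\tau})$ Hessian-vector products, where the tilde absorbs this logarithmic factor (the single cold-started initial solve adds only a lower-order $\mathcal{O}(\sqrt{L\tau}\log(G/\epsilon))$ term). Multiplying this per-step inner cost by the outer-step counts of \cref{crl:err-ctrl-euler,crl:err-ctrl-trapezoid}, and invoking those corollaries to guarantee that $\frac{\epsilon}{4}$-approximate directions suffice for $\epsilon$-accuracy, yields $K_{\tn{E-CG}} \sim \tilde{\mathcal{O}}(L^{3/2}\tau^{3/2}(f(x_0)-f^*)T/\epsilon)$ and $K_{\tn{tr-CG}} \sim \tilde{\mathcal{O}}(L\tau^{1/2}(2+G)^{3/2}T/\epsilon^{1/2})$, completing the proof.
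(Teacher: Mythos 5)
Your proposal is correct and follows essentially the same route as the paper's proof: outer iteration counts taken from \cref{crl:err-ctrl-euler,crl:err-ctrl-trapezoid}, a uniform condition-number bound $\kappa_k \le \tau L$, the classical CG linear-convergence bound, and a warm-start argument showing the initial CG residual is $\delta + \mathcal{O}(h)$ so that each inner solve costs $\tilde{\mathcal{O}}(\sqrt{\tau L})$ Hessian-vector products. The only (immaterial) difference is your warm-start chaining for the trapezoid method, where you initialize $\hat{d}_{k+1,1}$ from $\hat{d}_{k,2}$ whereas the paper initializes it from $\hat{d}_{k,1}$; both give the same $\mathcal{O}(h)$ perturbation bound.
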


    \begin{proof}[Proof of \cref{thm:err-ctrl-cg}]
        Here we only need to consider the inner complexity, \ie the number of iterations required to compute the approximate direction. 
        For the Euler-CG algorithm, let $\lrrr{y_{k,s}}$ denote the sequence generated by the conjugate gradient method with $y_{k,0} = \hat{d}_{k-1}$, let $H_k = \nabla^2 f(x_k) + \lambda_{k+1} \nabla^2 \Omega(x_k)$, and let $g_k = \nabla f(x_k)$. 
        The existing results of the conjugate gradient method \cite{bertsekas1997nonlinear} guarantee that $\vvt{H_k y_{k,s} + g_k}_2 \le 2 \sqrt{\kappa_k} (1-\frac{2}{\sqrt{\kappa_k}+1})^s \vvt{H_k y_{k,0} + g_k}_2$, where $\kappa_k$ is the condition number of $H_k$ and $\kappa_k \le \frac{(1 + \lambda_k) L}{\mu + \lambda_k \sigma} \le \tau L$. 
        Since the initialization $y_0 = \hat{d}_{k-1}$ is the approximate direction at the last iteration, we have $\vvta{H_{k-1} \hat{d}_{k-1} + g_{k-1}} \le \frac{\epsilon}{4}$. 
        Then the initialization guarantees that 
	    \begin{align*}
		    \vvta{H_k \hat{d}_{k-1} + g_k}_2 & \le \vvta{H_{k-1} \hat{d}_{k-1} + g_{k-1}}_2 + \vvta{H_k \hat{d}_{k-1} + g_k - H_{k-1} \hat{d}_{k-1} - g_{k-1}} \\
    	    & \le \frac{\epsilon}{4} + h L (1 + \lambda_k) \lr{\vvta{\hat{d}_{k-1}}^2 + \vvta{\hat{d}_{k-1}}} \le \frac{\epsilon}{4} + 2h L (2+G)^2.  
        \end{align*}
        Applying $h \sim \mathcal{O}(\frac{\epsilon}{(f(x_0) - f^*) \tau L})$, the inner complexity $N_k$ has an upper bound $N_k \le \frac{\sqrt{\kappa_k} + 1}{2} \log ( \frac{2 \sqrt{\kappa_k} \vvta{H_k \hat{d}_{k-1} + g_k}}{\epsilon / 4 }) \sim \tilde{\mathcal{O}}(\sqrt{\tau L})$.  
        Therefore, the total computation complexity of the Euler-CG method with the approximate oracle of the conjugate gradient to compute an $\epsilon$-accurate solution path is $\tilde{\mathcal{O}} (\frac{L^{3/2} \tau^{3/2} (f(x_0)-f^*) T}{\epsilon})$. 
        For the trapezoid-CG algorithm, let $x_k' = x_k + h \hat{d}_{k,1}$, $\lambda_k' = (1-h+h^2) \lambda_k$, $H_{k,1} := \nabla^2 f(x_k) + \lambda_k \nabla \Omega(x_k), g_{k,1} := \nabla f(x_k)$, $H_{k,2} := \nabla^2 f(x_k') + \lambda_k' \nabla \Omega(x_k')$, and $g_{k,2} := \nabla f(x_k')$. 
        At iteration $k$, the trapezoid-CG method requires to compute the $\frac{\epsilon}{4}$ directions $\hat{d}_{k,1}$ and $\hat{d}_{k,2}$. 
        In the conjugate gradient subroutine, we use $\hat{d}_{k-1, 1}$ as a warm start to compute $\hat{d}_{k, 1}$, and likewise use $\hat{d}_{k, 1}$ to warm start $\hat{d}_{k, 2}$.
        Similarly, we have $\vvta{H_{k, 1} \hat{d}_{k - 1, 1} + g_{k, 1}} \le \frac{\epsilon}{4} + 2h L (2+G)^2$ and $\vvta{H_{k, 2} \hat{d}_{k, 1} + g_{k, 2}} \le \frac{\epsilon}{4} + 2h L (2+G)^2$. 
        Applying $h \sim \mathcal{O}(\frac{\epsilon^{1/2}}{L^{1/2} (2+G)^{3/2}})$, the inner complexity $N_{k, 1}$ and $N_{k, 2}$ have upper bounds $N_{k, 1}, N_{k, 2} \sim \tilde{\mathcal{O}}(\sqrt{\tau L})$.  
        Therefore, the total computational complexity of the trapezoid-CG method with the conjugate gradient approximate oracle to compute an $\epsilon$-accurate solution path is $\tilde{\mathcal{O}} (\frac{L \tau^{1/2} (2+G)^{3/2} T}{\epsilon^{1/2}})$. 
    \end{proof}
    
    Recalling the complexity results in \cref{rmk:grid-search}, we notice that when we use Nesterov's accelerated gradient method as the subproblem solver, the total complexity will be $\frac{(\tau L)^{3/2} G T \log 2}{\epsilon}$. 
    We can see that the total complexity of the $\delta$-approximate version of \Cref{alg:trapezoid} has order $\mathcal{O}(\frac{1}{\sqrt{\epsilon}})$ and is better than the best known result for the grid search method. 
    Again, the complexity of the trapezoid method is better than that of the Euler method, since it exploits the higher-order smoothness of the functions $f(\cdot)$ and $\Omega(\cdot)$.

\section{Computational Experiments and Results}\label{sec:experiments}
	
	In this section, we present computational results of numerical experiments in which we implement different versions of discretization schemes and interpolation methods to compute the approximate solution path. 
    To compare with our methods, we introduce two approaches based on grid search (see, e.g., \citet{giesen2012approximating,ndiaye2019safe}).
    For the subproblem solver in the grid search methods, we use the warm-started exact Newton method and Nesterov's accelerated gradient method to compare with our exact methods and SOCG variants, respectively.
    We focus on the following 8 versions of the update schemes, where ``CG'' stands for conjugate gradient. In all cases, we fix an accuracy level $\epsilon > 0$ and set the step/grid-size parameters accordingly.
    \begin{itemize}
        \item Euler, Euler-CG: \cref{alg:euler}, and its $\delta$-approximate version using CG as the sub-problem approximate oracle. 
        \item Trapezoid, Trapezoid-CG: \cref{alg:trapezoid}, and its $\delta$-approximate version using CG as the approximate subproblem oracle. 
        \item Runge-Kutta, Runge Kutta-CG: an extension of our algorithm that uses the Runge-Kutta update scheme, and its $\delta$-approximate version using CG as the subproblem approximate oracle. 
        \item Grid Search-Newton/AGD: grid search method with either the exact Newton or Nesterov's accelerated gradient method as the sub-problem solver. 
    \end{itemize}
    
    \subsection{Logistic Regression}
    Herein we examine the empirical behavior of each of the previously presented methods on logistic regression problems using the breast cancer dataset from \citet{Dua:2019} ($32$ features and $569$ observations) and the leukemia dataset from \citet{golub1999molecular} ($7129$ features and $72$ observations). 
	In particular, let $\lrrr{(a_i, b_i)}_{i=1}^n$ denote a training set of features $a_i \in \bbR^p$ and labels $b_i \in \{-1, +1\}$ and define the sets of positive and negative examples by $S_+ := \lrrr{i \in [n]: b_i = 1}$ and $S_- := \lrrr{i \in [n]: b_i = -1}$. We examine two logistic regression variants: {\em (i)} regularized logistic regression with $f(x) = \frac1n \sum_{i=1}^n \log(1+e^{-b_i a_i^T x})$ and $\Omega(x) = \frac12 \vvt{x}^2$, 
	where $\lambda_{\min} = 10^{-4}$, $\lambda_{\max} = 10^4$, 
	and {\em (ii)} re-weighted logistic regression with $f(x) = \frac{1}{|S_+|} \sum_{i \in S_+} \log(1+e^{-b_i a_i^T x})$ and $\Omega(x) = \frac{1}{|S_-|} \sum_{i \in S_-} \log(1+e^{-b_i a_i^T x})$, 
	where $\lambda_{\min} = 10^{-1}$, $\lambda_{\max} = 10$. 
    The initialization $x_0$ which we apply in each method is the same and is a very nearly-optimal solution to the problem such that $\vvt{\nabla F_{\lambda_{\max}}(x_0)} \approx 10^{-15}$. 
    Note that in general it is difficult to compute the exact path accuracy of an approximate solution path $\hat{x}(\cdot): [\lambda_{\min}, \lambda_{\max}] \to \bbR^p$, namely $A(\hat{x}) := \max_{\lambda \in \Lambda} \vvt{\nabla F_{\lambda} (\hat{x}(\lambda))}$, where $\Lambda = [\lambda_{\min}, \lambda_{\max}]$. 
    We examine the approximate path accuracy in lieu of the exact computation of the path accuracy, namely, we consider $\hat{A}(\hat{x}) := \max_{\lambda \in \hat{\Lambda}} \vvt{\nabla F_{\lambda} (\hat{x}(\lambda))}$ where $\hat{\Lambda} = \{ \lambda_0, \frac{\lambda_0 + \lambda_1}{2}, \lambda_1, \frac{\lambda_1 + \lambda_2}{2}, \lambda_2, \dots, \lambda_K \}$, 
    since the theoretical largest interpolation error occurs at the midpoint of two breakpoints. 
    In \cref{fig:breast-cancer-exact}, we vary the desired accuracy parameter $\epsilon$ and plot the number of Hessian oracle computations required by \cref{alg:euler}, \cref{alg:trapezoid}, the algorithm with the Runge-Kutta update scheme, and the grid search method. 
    The theoretical evaluation numbers are derived from the complexity analysis, and the observed number of evaluations of each method is determined according to a ``doubling trick,'' whereby for each value of $K$ we calculate the observed accuracy along the path by interpolation, and if the observed accuracy is too large, then we double the value of $K$ until it is below $\epsilon$. 
    The numerical results match the asymptotic order and intuition, as well as the superior performance of the trapezoid and Runge-Kutta methods due to the higher-order smoothness of the loss and regularization function. 
    In part {\em (a)} of \cref{fig:breast-cancer-exact}, we notice that the theoretical complexity is higher than the practical one, since it is more conservative. 
    Therefore, we will stick to the ``doubling method'' in the following experiments and compare the practical performance of each method.

    \begin{figure}[tbhp]
		\centering
		\hfill
		\begin{subfloat}[Regularized logistic regression.]{\scalebox{0.40}{\includegraphics{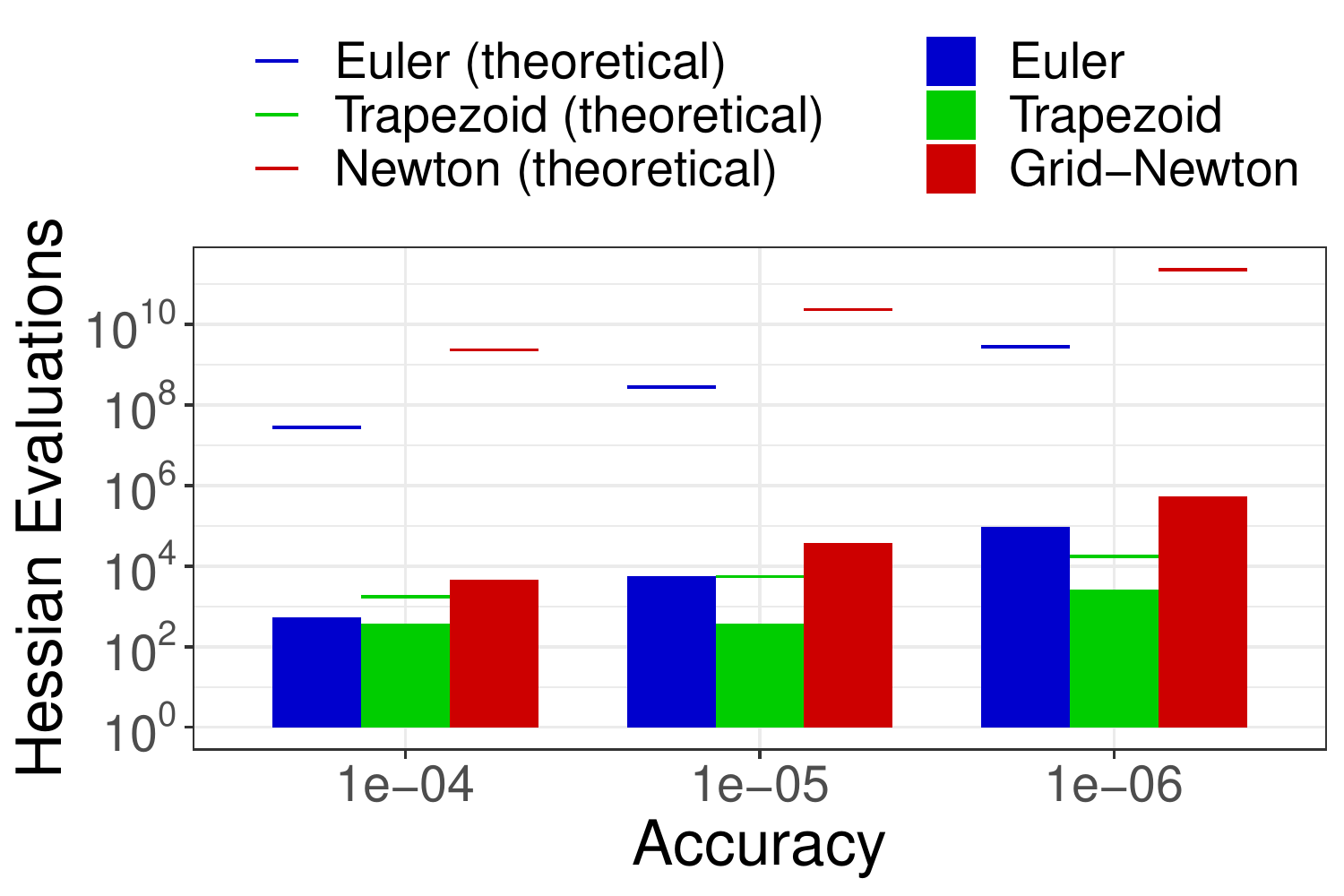}}}
		\end{subfloat}
		\hfill
		\begin{subfloat}[Re-weighted logistic regression.]{\scalebox{0.40}{\includegraphics{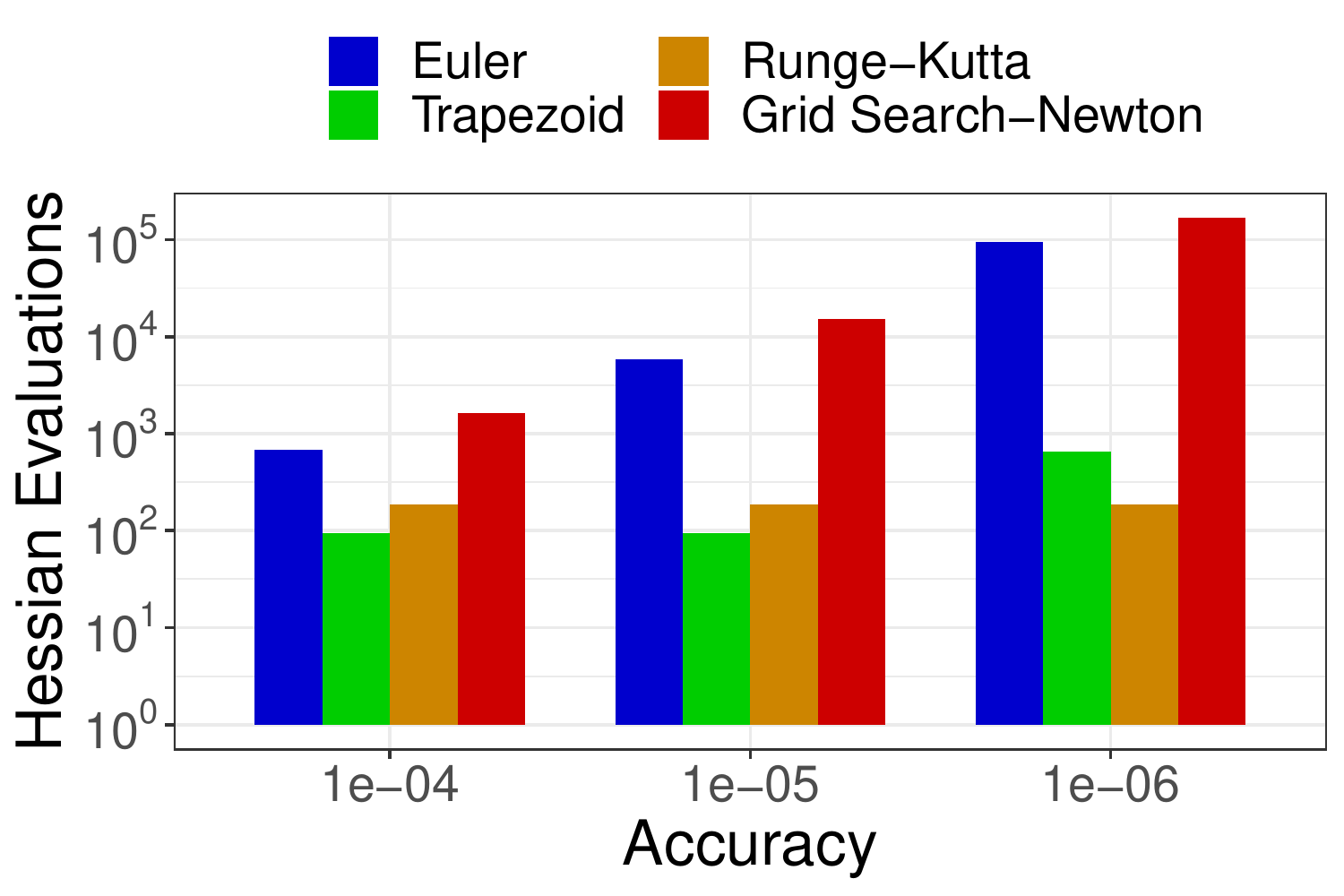}}}
		\end{subfloat}
		\hfill
        \caption{Exact methods on the breast cancer data with $n = 569$ observations and $p = 30$ features.}
		\label{fig:breast-cancer-exact}
	\end{figure}
	
	\Cref{fig:leukemia-socg} summarizes the performance of the three aforementioned SOCG methods, and the grid search method with Nesterov's accelerated gradient method. 
    For SOCG methods, we record the total number of both gradient evaluation and Hessian-vector products, which is our measure of computational complexity. 
	We implement these methods on the leukemia dataset and the regularized logistic regression problem. 
	From \cref{fig:leukemia-socg}, we can see that the numerical results match our theoretical asymptotic bounds. 
	In part {\em (b)}, we provide the CPU time to compute the approximate solutions of both SOCG methods and exact second-order methods (in the more transparent bars). 
	We comment that for the exact methods we only run the case when the desired accuracy equals $10^{-4}$, since the grid search method already takes about 15 hours to finish. 
	The dominance of SOCG methods in these cases illustrates the benefit and ability of SOCG methods to deal with large-scale problems.

    \begin{figure}[tbhp]
		\centering
		\hfill
		\begin{subfloat}[Computation complexity.]{\scalebox{0.40}{\includegraphics{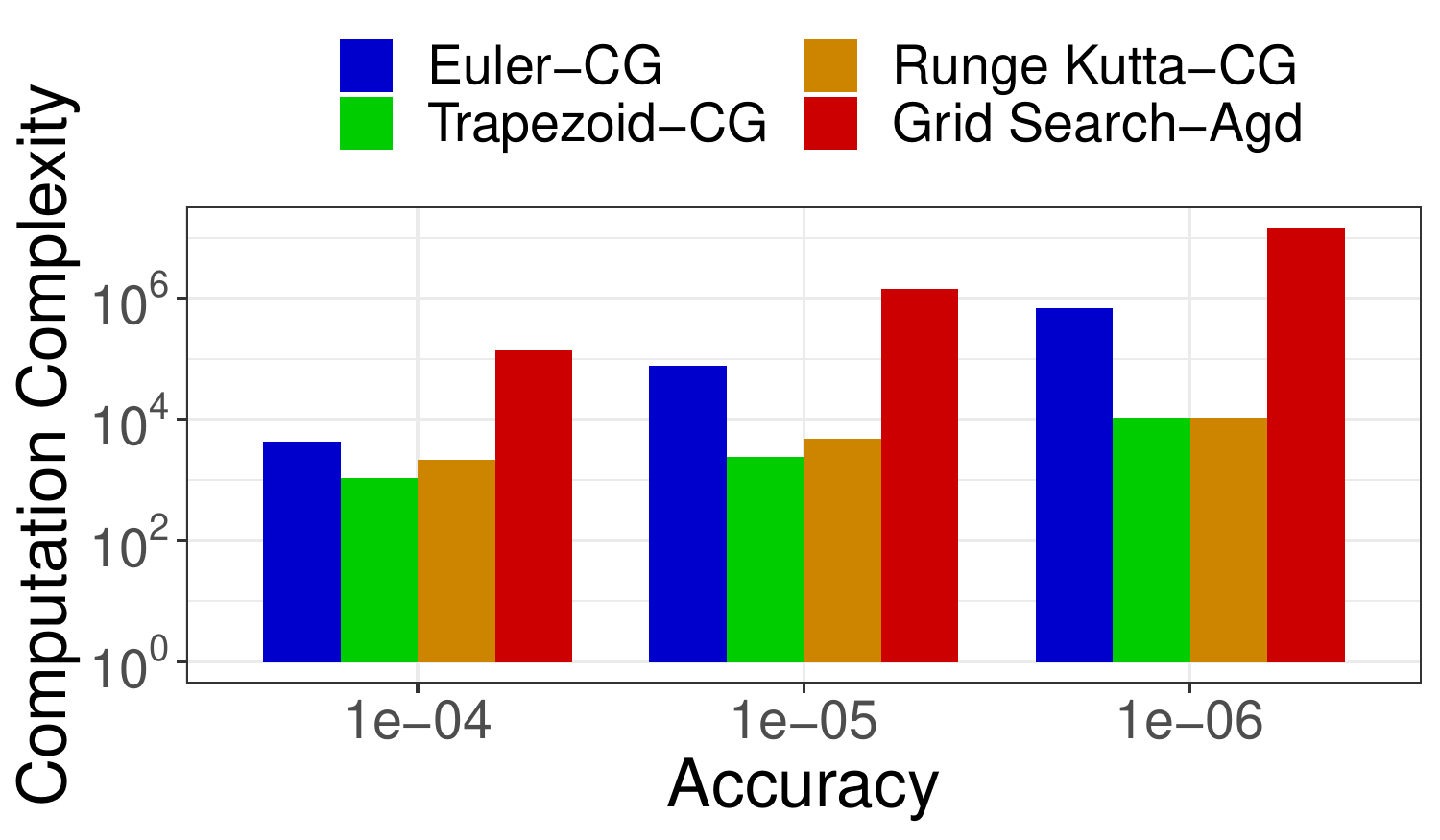}}}
		\end{subfloat}
		\hfill
		\begin{subfloat}[CPU time.]{\scalebox{0.40}{\includegraphics{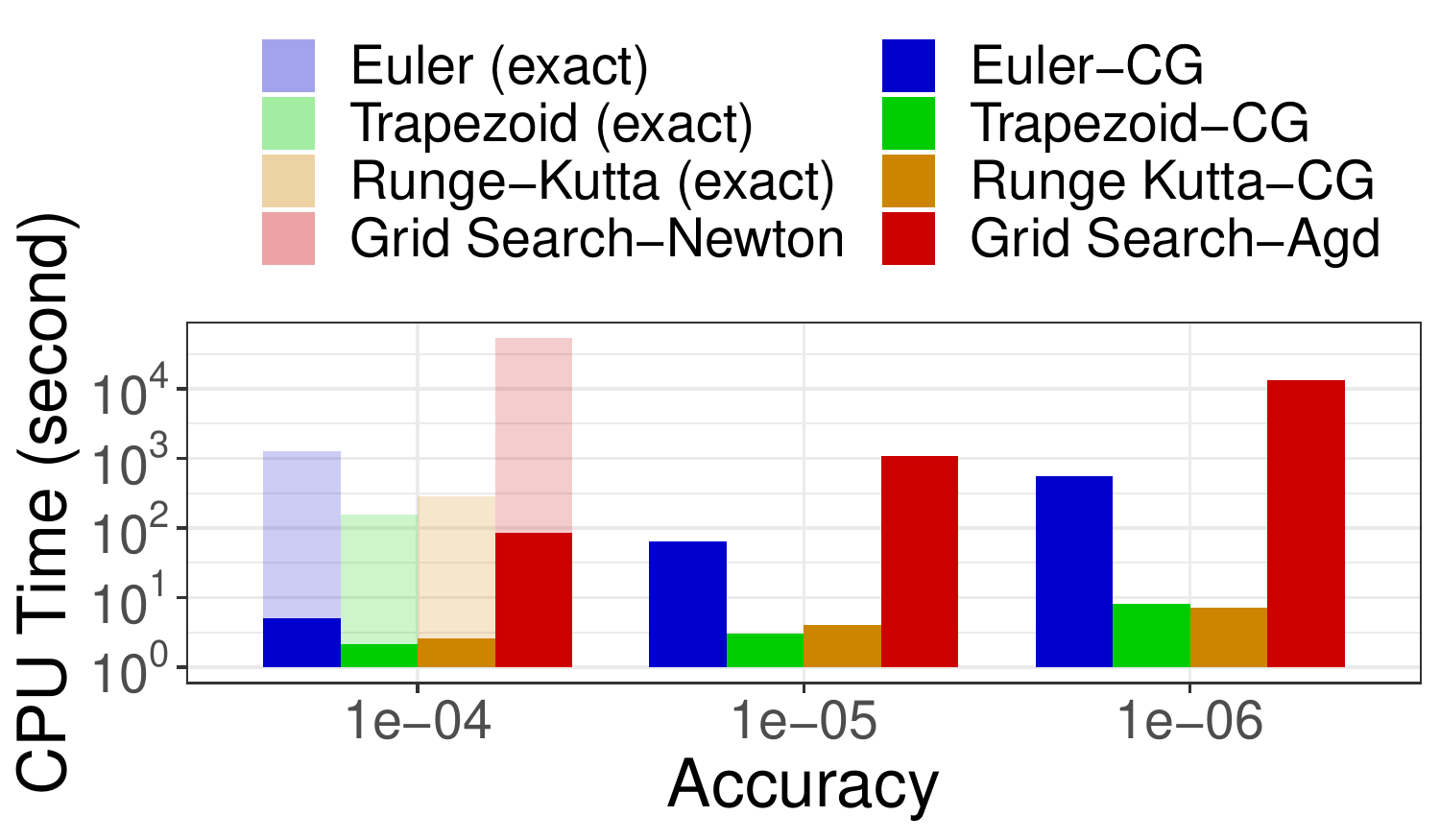}}}
		\end{subfloat}
		\hfill
        \caption{Second-order conjugate gradient methods on regularized logistic regression on leukemia data with $n = 72$ observations and $p = 7129$ features.}
		\label{fig:leukemia-socg}
	\end{figure}
    
    \subsection{Moment Matching Problem}
    
	Here we consider the moment matching problem with entropy regularization. 
    Suppose that a discrete random variable $Z$ has sample space $\{w_1, \dots, w_{p+1}\}$ and probability distribution $x = (x_{(1)}, \dots, x_{(p+1)})$. 
    The goal is to match the empirical first $n$-th moments of $Z$, with entropy regularization to promote ``equal'' distributions.
    To formalize the problem, we consider the following constrained optimization problem: 
    \begin{align*}
        P(\lambda): \quad \min_{x \in \bbR^{p+1}} \ \  & \frac12 \vvt{Ax-b}^2 + \lambda \cdot \sum_{j=1}^n x_{(j)} \log(x_{(j)}) \\ 
        \tn{s.t.} \ \  & {\bm 1}_{p+1}^T x = 1, \quad x \ge 0, 
    \end{align*}
    where $x_{(j)}$ is the $j$-th component of $x$, $A \in \bbR^{n \times (p+1)}$ with $A_{i,j} = w_j^i$, and $\lambda \in \Lambda = [10^{-2}, 10^2]$. 
    The parametric optimization problem $P(\lambda)$ is a constrained optimization problem, including an affine constraint, which does not satisfy our assumptions. 
    Therefore, we introduce a new variable $y$ to substitute $x$. 
    Let $y \in \bbR^p$ with $y_{(i)} = x_{(i)}$, for $i = 1, \dots, p$, and $S' = \{y \in \bbR^p: y \ge 0, {\bm 1}_p^T y \le 1\}$.
    Then the above moment matching problem $P(\lambda)$ is equivalent to the following parametric optimization problem: 
    \begin{align*}
        P'(\lambda): \quad \min_{y \in \bbR^p} \ \  & \frac12 \vvta{A' y - b'}^2 + \lambda \cdot \lr{\sum_{j=1}^n y_{(j)} \log(y_{(j)}) + (1 - {\bm 1}_p^T y) \log(1 - {\bm 1}_p^T y)} \\ 
        \tn{s.t.} \ \  & {\bm 1}_p^T y \le 1, \quad y \ge 0, 
    \end{align*}
    where $A' = A_{1:p} - A_{p+1} {\bm 1}_p^T$ and $b' = b - A_{p+1}$. 
    We examine the empirical behavior of each of the previously presented methods on $(P')$ without constraints, that is, $f(y) = \frac12 \vvta{A' y - b'}^2$ and $\Omega(y) = \sum_{j=1}^n y_{(j)} \log(y_{(j)}) + (1 - {\bm 1}_p^T y) \log (1 - {\bm 1}_p^T y)$. 
    \newtext{Translating $y$ back to $x$, we can show that the exact path $x(\lambda)$ for $\lambda \in [\lambda_{\min}, \lambda_{\max}]$ is a subset of the relative interior of $S$, but the proof is omitted for brevity.}
    Also, when the step-size $h$ is small enough, all grid points $\{x_k\}$ will be in the relative interior of $S$, and therefore, the approximate path $\hat{x}(\lambda)$ for $\lambda \in [\lambda_{\min}, \lambda_{\max}]$ is a subset of the relative interior of $S$. 
    
    \paragraph{Synthetic Data Generation Process}
    We generate the data $(x, w)$ according to the following generative model. 
    The true distribution vector $x^{\tn{true}}$ is according to the model $x_{(i)}^{\tn{true}} = \frac{\exp(z_{(i)})}{\sum_{j=1}^{p+1} \exp(z_{(j)})}$ for $i = 1, \dots, p+1$, where $z_{(i)} \sim \tn{unif}(0, 1)$. 
    The sample vector $w$ is generated from an independent uniform distribution, \ie $w_{(i)} \sim \tn{unif}(0, 1)$ for $i = 1, \dots, p$, and without loss of generality, we set $w_{(p+1)} = 0$.

	We examine the aforementioned exact and SOCG methods with different problem dimensions. 
	In the following set of experiments, we set the number of observations $n = 20$, the desired accuracy $\epsilon = 10^{-5}$, and vary the problem dimension $p \in \{128, 256, 512, 1024, 2048\}$. 
	The true distribution $x^{\text{true}}$ and the sample vector $w$ are synthetically generated according to the same process as before. 
	\Cref{fig:moment-matching-size} displays our result for this experiment. 
	Generally, we observe that the CPU time of computing an $\epsilon$-path increases as the dimension of the problem increases. 
	Comparing the CPU time of the exact methods and the SOCG methods, we notice that the SOCG methods are less sensitive to the size of the problems. 
	Again, the results demonstrate the superiority of SOCG methods for solving large-scale problems.

	\begin{figure}[tbhp]
		\centering
		\includegraphics[width=\textwidth]{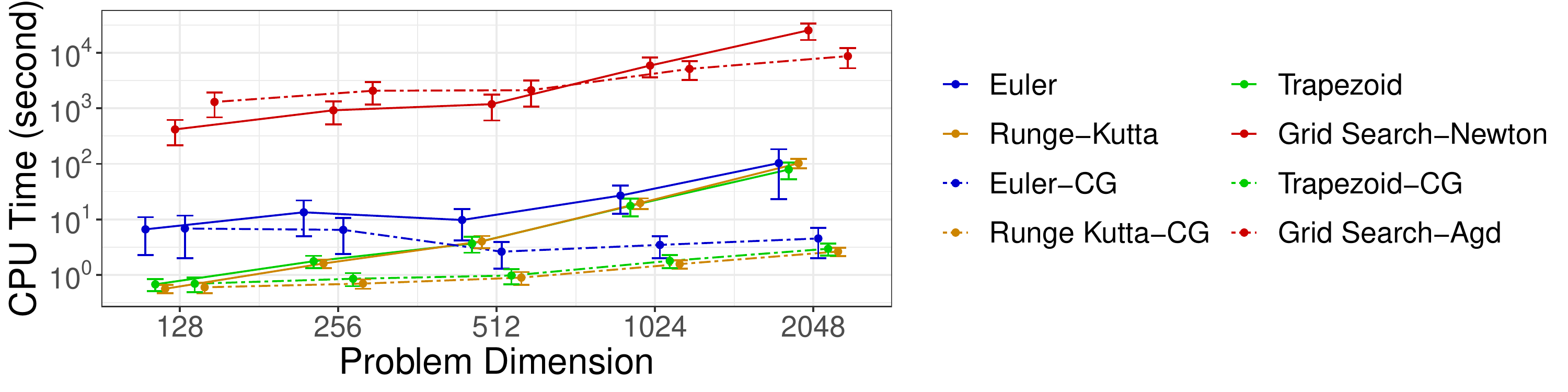}
        \caption{Exact and SOCG methods on moment matching problem with $n = 20$ and $\epsilon = 10^{-5}$, the CPU time comparison with different problem dimension. }
		\label{fig:moment-matching-size}
	\end{figure}

\section{Conclusion}
Inspired by a differential equation perspective on the parametric solution path, we developed and analyzed several different exact and inexact second-order algorithms. We develop theoretical results that are an improvement over the best-known results of grid search methods, and we present computational results that demonstrate the effectiveness of our methods.

\section*{Acknowledgments}
This research is supported, in part, by NSF AI Institute for Advances
in Optimization Award 2112533.

\bibliographystyle{plainnat}
{\small
\bibliography{ref}}

\begin{thebibliography}{47}
\providecommand{\natexlab}[1]{#1}
\providecommand{\url}[1]{\texttt{#1}}
\expandafter\ifx\csname urlstyle\endcsname\relax
  \providecommand{\doi}[1]{doi: #1}\else
  \providecommand{\doi}{doi: \begingroup \urlstyle{rm}\Url}\fi

\bibitem[Abbas et~al.(2014)Abbas, Attouch, and Svaiter]{abbas2014newton}
Boushra Abbas, H{\'e}dy Attouch, and Benar~F Svaiter.
\newblock Newton-like dynamics and forward-backward methods for structured
  monotone inclusions in hilbert spaces.
\newblock \emph{Journal of Optimization Theory and Applications}, 161\penalty0
  (2):\penalty0 331--360, 2014.

\bibitem[Alvarez and Attouch(2001)]{alvarez2001inertial}
Felipe Alvarez and Hedy Attouch.
\newblock An inertial proximal method for maximal monotone operators via
  discretization of a nonlinear oscillator with damping.
\newblock \emph{Set-Valued Analysis}, 9\penalty0 (1-2):\penalty0 3--11, 2001.

\bibitem[Alvarez et~al.(2002)Alvarez, Attouch, Bolte, and
  Redont]{alvarez2002second}
Felipe Alvarez, Hedy Attouch, J{\'e}r{\^o}me Bolte, and P~Redont.
\newblock A second-order gradient-like dissipative dynamical system with
  hessian-driven damping.: Application to optimization and mechanics.
\newblock \emph{Journal de math{\'e}matiques pures et appliqu{\'e}es},
  81\penalty0 (8):\penalty0 747--779, 2002.

\bibitem[Attouch and Svaiter(2011)]{attouch2011continuous}
Hedy Attouch and Benar~Fux Svaiter.
\newblock A continuous dynamical newton-like approach to solving monotone
  inclusions.
\newblock \emph{SIAM Journal on Control and Optimization}, 49\penalty0
  (2):\penalty0 574--598, 2011.

\bibitem[Attouch et~al.(2016)Attouch, Alves, and Svaiter]{attouch2016dynamic}
Hedy Attouch, Maicon~Marques Alves, and Benar~Fux Svaiter.
\newblock A dynamic approach to a proximal-newton method for monotone
  inclusions in hilbert spaces, with complexity $o(1/n^2)$.
\newblock \emph{Journal of Convex Analysis}, 23\penalty0 (1):\penalty0
  139--180, 2016.

\bibitem[Attouch et~al.(2018)Attouch, Chbani, Peypouquet, and
  Redont]{attouch2018fast}
Hedy Attouch, Zaki Chbani, Juan Peypouquet, and Patrick Redont.
\newblock Fast convergence of inertial dynamics and algorithms with asymptotic
  vanishing viscosity.
\newblock \emph{Mathematical Programming}, 168\penalty0 (1-2):\penalty0
  123--175, 2018.

\bibitem[Bach et~al.(2006)Bach, Heckerman, and Horvitz]{bach2006considering}
Francis~R Bach, David Heckerman, and Eric Horvitz.
\newblock Considering cost asymmetry in learning classifiers.
\newblock \emph{Journal of Machine Learning Research}, 7\penalty0
  (Aug):\penalty0 1713--1741, 2006.

\bibitem[Bertsekas(1997)]{bertsekas1997nonlinear}
Dimitri~P Bertsekas.
\newblock Nonlinear programming.
\newblock \emph{Journal of the Operational Research Society}, 48\penalty0
  (3):\penalty0 334--334, 1997.

\bibitem[Bot and Csetnek(2016)]{bot2016second}
Radu~Ioan Bot and Ern{\"{o}}~Robert Csetnek.
\newblock Second order forward-backward dynamical systems for monotone
  inclusion problems.
\newblock \emph{SIAM Journal on Control and Optimization}, 54\penalty0
  (3):\penalty0 1423--1443, 2016.

\bibitem[Conn et~al.(2000)Conn, Gould, and Toint]{conn2000trust}
Andrew~R Conn, Nicholas~IM Gould, and Ph~L Toint.
\newblock \emph{Trust region methods}, volume~1.
\newblock Siam, 2000.

\bibitem[Dua and Graff(2017)]{Dua:2019}
Dheeru Dua and Casey Graff.
\newblock {UCI} machine learning repository, 2017.
\newblock URL \url{http://archive.ics.uci.edu/ml}.

\bibitem[Efron et~al.(2004)Efron, Hastie, Johnstone, Tibshirani,
  et~al.]{efron2004least}
Bradley Efron, Trevor Hastie, Iain Johnstone, Robert Tibshirani, et~al.
\newblock Least angle regression.
\newblock \emph{The Annals of statistics}, 32\penalty0 (2):\penalty0 407--499,
  2004.

\bibitem[Friedman et~al.(2010)Friedman, Hastie, and
  Tibshirani]{friedman2010regularization}
Jerome Friedman, Trevor Hastie, and Rob Tibshirani.
\newblock Regularization paths for generalized linear models via coordinate
  descent.
\newblock \emph{Journal of statistical software}, 33\penalty0 (1):\penalty0 1,
  2010.

\bibitem[Garcia et~al.(1989)Garcia, Prett, and Morari]{garcia1989model}
Carlos~E Garcia, David~M Prett, and Manfred Morari.
\newblock Model predictive control: Theory and practice—a survey.
\newblock \emph{Automatica}, 25\penalty0 (3):\penalty0 335--348, 1989.

\bibitem[Gautschi(2011)]{gautschi2011numerical}
Walter Gautschi.
\newblock \emph{Numerical analysis}.
\newblock Springer Science \& Business Media, 2011.

\bibitem[Giesen et~al.(2012{\natexlab{a}})Giesen, Jaggi, and
  Laue]{giesen2012approximating}
Joachim Giesen, Martin Jaggi, and S{\"o}ren Laue.
\newblock Approximating parameterized convex optimization problems.
\newblock \emph{ACM Transactions on Algorithms (TALG)}, 9\penalty0
  (1):\penalty0 10, 2012{\natexlab{a}}.

\bibitem[Giesen et~al.(2012{\natexlab{b}})Giesen, Jaggi, and
  Laue]{giesen2012regularization}
Joachim Giesen, Martin Jaggi, and S{\"o}ren Laue.
\newblock Regularization paths with guarantees for convex semidefinite
  optimization.
\newblock In \emph{Artificial Intelligence and Statistics}, pages 432--439,
  2012{\natexlab{b}}.

\bibitem[Golub et~al.(1999)Golub, Slonim, Tamayo, Huard, Gaasenbeek, Mesirov,
  Coller, Loh, Downing, Caligiuri, et~al.]{golub1999molecular}
Todd~R Golub, Donna~K Slonim, Pablo Tamayo, Christine Huard, Michelle
  Gaasenbeek, Jill~P Mesirov, Hilary Coller, Mignon~L Loh, James~R Downing,
  Mark~A Caligiuri, et~al.
\newblock Molecular classification of cancer: class discovery and class
  prediction by gene expression monitoring.
\newblock \emph{science}, 286\penalty0 (5439):\penalty0 531--537, 1999.

\bibitem[Guddat et~al.(1990)Guddat, Vazquez, and Jongen]{guddat1990parametric}
J{\"u}rgen Guddat, F~Guerra Vazquez, and Hubertus~Th Jongen.
\newblock \emph{Parametric optimization: singularities, pathfollowing and
  jumps}.
\newblock Springer, 1990.

\bibitem[Hastie et~al.(2004)Hastie, Rosset, Tibshirani, and
  Zhu]{hastie2004entire}
Trevor Hastie, Saharon Rosset, Robert Tibshirani, and Ji~Zhu.
\newblock The entire regularization path for the support vector machine.
\newblock \emph{Journal of Machine Learning Research}, 5\penalty0
  (Oct):\penalty0 1391--1415, 2004.

\bibitem[Karimireddy et~al.(2018)Karimireddy, Stich, and
  Jaggi]{karimireddy2018global}
Sai~Praneeth Karimireddy, Sebastian~U Stich, and Martin Jaggi.
\newblock Global linear convergence of newton's method without strong-convexity
  or lipschitz gradients.
\newblock \emph{arXiv preprint arXiv:1806.00413}, 2018.

\bibitem[Kim et~al.(2007)Kim, Koh, Lustig, Boyd, and
  Gorinevsky]{kim2007interior}
Seung-Jean Kim, Kwangmoo Koh, Michael Lustig, Stephen Boyd, and Dimitry
  Gorinevsky.
\newblock An interior-point method for large-scale $\ell_1$-regularized least
  squares.
\newblock \emph{IEEE journal of selected topics in signal processing},
  1\penalty0 (4):\penalty0 606--617, 2007.

\bibitem[Loosli et~al.(2007)Loosli, Gasso, and Canu]{loosli2007regularization}
Ga{\"e}lle Loosli, Gilles Gasso, and St{\'e}phane Canu.
\newblock Regularization paths for $\nu$-svm and $\nu$-svr.
\newblock In \emph{International Symposium on Neural Networks}, pages 486--496.
  Springer, 2007.

\bibitem[Mairal and Yu(2012)]{mairal2012complexity}
Julien Mairal and Bin Yu.
\newblock Complexity analysis of the lasso regularization path.
\newblock In \emph{Proceedings of the 29th International Coference on
  International Conference on Machine Learning}, pages 1835--1842. Omnipress,
  2012.

\bibitem[Marquardt(1963)]{marquardt1963algorithm}
Donald~W Marquardt.
\newblock An algorithm for least-squares estimation of nonlinear parameters.
\newblock \emph{Journal of the society for Industrial and Applied Mathematics},
  11\penalty0 (2):\penalty0 431--441, 1963.

\bibitem[Mazumder et~al.(2010)Mazumder, Hastie, and
  Tibshirani]{mazumder2010spectral}
Rahul Mazumder, Trevor Hastie, and Robert Tibshirani.
\newblock Spectral regularization algorithms for learning large incomplete
  matrices.
\newblock \emph{Journal of machine learning research}, 11\penalty0
  (Aug):\penalty0 2287--2322, 2010.

\bibitem[Mazumder et~al.(2011)Mazumder, Friedman, and
  Hastie]{mazumder2011sparsenet}
Rahul Mazumder, Jerome~H Friedman, and Trevor Hastie.
\newblock Sparsenet: Coordinate descent with nonconvex penalties.
\newblock \emph{Journal of the American Statistical Association}, 106\penalty0
  (495):\penalty0 1125--1138, 2011.

\bibitem[Mor{\'e}(1978)]{more1978levenberg}
Jorge~J Mor{\'e}.
\newblock The levenberg-marquardt algorithm: implementation and theory.
\newblock In \emph{Numerical analysis}, pages 105--116. Springer, 1978.

\bibitem[Ndiaye et~al.(2019)Ndiaye, Le, Fercoq, Salmon, and
  Takeuchi]{ndiaye2019safe}
Eugene Ndiaye, Tam Le, Olivier Fercoq, Joseph Salmon, and Ichiro Takeuchi.
\newblock Safe grid search with optimal complexity.
\newblock In \emph{International Conference on Machine Learning}, pages
  4771--4780, 2019.

\bibitem[Nesterov(2018)]{nesterov2018implementable}
Yurii Nesterov.
\newblock Implementable tensor methods in unconstrained convex optimization.
\newblock Technical report, Universit{\'e} catholique de Louvain, Center for
  Operations Research and Econometrics (CORE), 2018.

\bibitem[Nesterov and Nemirovskii(1994)]{nesterov1994interior}
Yurii Nesterov and Arkadii Nemirovskii.
\newblock \emph{Interior-point polynomial algorithms in convex programming}.
\newblock SIAM, 1994.

\bibitem[Nesterov and Polyak(2006)]{nesterov2006cubic}
Yurii Nesterov and Boris~T Polyak.
\newblock Cubic regularization of newton method and its global performance.
\newblock \emph{Mathematical Programming}, 108\penalty0 (1):\penalty0 177--205,
  2006.

\bibitem[Osborne et~al.(2000)Osborne, Presnell, and Turlach]{osborne2000new}
Michael~R Osborne, Brett Presnell, and Berwin~A Turlach.
\newblock A new approach to variable selection in least squares problems.
\newblock \emph{IMA journal of numerical analysis}, 20\penalty0 (3):\penalty0
  389--403, 2000.

\bibitem[Polyak(2009)]{polyak2009regularized}
Roman~A Polyak.
\newblock Regularized newton method for unconstrained convex optimization.
\newblock \emph{Mathematical programming}, 120\penalty0 (1):\penalty0 125--145,
  2009.

\bibitem[Ralph(1994)]{ralph1994global}
Daniel Ralph.
\newblock Global convergence of damped newton's method for nonsmooth equations
  via the path search.
\newblock \emph{Mathematics of Operations Research}, 19\penalty0 (2):\penalty0
  352--389, 1994.

\bibitem[Renegar(2001)]{renegar2001mathematical}
James Renegar.
\newblock \emph{A mathematical view of interior-point methods in convex
  optimization}, volume~3.
\newblock Siam, 2001.

\bibitem[Rosset(2005)]{rosset2004tracking}
Saharon Rosset.
\newblock Following curved regularized optimization solution paths.
\newblock In L.~K. Saul, Y.~Weiss, and L.~Bottou, editors, \emph{Advances in
  Neural Information Processing Systems 17}, pages 1153--1160. MIT Press, 2005.
\newblock URL
  \url{http://papers.nips.cc/paper/2600-following-curved-regularized-optimization-solution-paths.pdf}.

\bibitem[Rosset and Zhu(2007)]{rosset2007piecewise}
Saharon Rosset and Ji~Zhu.
\newblock Piecewise linear regularized solution paths.
\newblock \emph{The Annals of Statistics}, pages 1012--1030, 2007.

\bibitem[Scieur et~al.(2017)Scieur, Roulet, Bach, and
  d'Aspremont]{scieur2017integration}
Damien Scieur, Vincent Roulet, Francis Bach, and Alexandre d'Aspremont.
\newblock Integration methods and optimization algorithms.
\newblock In \emph{Advances in Neural Information Processing Systems}, pages
  1109--1118, 2017.

\bibitem[Scott(2011)]{scott2011numerical}
L.R. Scott.
\newblock \emph{Numerical Analysis}.
\newblock Princeton University Press, 2011.
\newblock ISBN 9781400838967.
\newblock URL \url{https://books.google.com/books?id=SfCjL\_5AaRQC}.

\bibitem[Su et~al.(2014)Su, Boyd, and Candes]{su2014differential}
Weijie Su, Stephen Boyd, and Emmanuel Candes.
\newblock A differential equation for modeling nesterov’s accelerated
  gradient method: Theory and insights.
\newblock In \emph{Advances in Neural Information Processing Systems}, pages
  2510--2518, 2014.

\bibitem[Wang et~al.(2014)Wang, Liu, and Zhang]{wang2014optimal}
Zhaoran Wang, Han Liu, and Tong Zhang.
\newblock Optimal computational and statistical rates of convergence for sparse
  nonconvex learning problems.
\newblock \emph{The Annals of Statistics}, 42\penalty0 (6):\penalty0
  2164--2201, 2014.

\bibitem[Wibisono et~al.(2016)Wibisono, Wilson, and
  Jordan]{wibisono2016variational}
Andre Wibisono, Ashia~C Wilson, and Michael~I Jordan.
\newblock A variational perspective on accelerated methods in optimization.
\newblock \emph{proceedings of the National Academy of Sciences}, 113\penalty0
  (47):\penalty0 E7351--E7358, 2016.

\bibitem[Yuan and Zou(2009)]{yuan2009efficient}
Ming Yuan and Hui Zou.
\newblock Efficient global approximation of generalized nonlinear
  $\ell_1$-regularized solution paths and its applications.
\newblock \emph{Journal of the American Statistical Association}, 104\penalty0
  (488):\penalty0 1562--1574, 2009.

\bibitem[Zhang et~al.(2018)Zhang, Mokhtari, Sra, and
  Jadbabaie]{zhang2018direct}
Jingzhao Zhang, Aryan Mokhtari, Suvrit Sra, and Ali Jadbabaie.
\newblock Direct runge-kutta discretization achieves acceleration.
\newblock In \emph{Advances in Neural Information Processing Systems}, pages
  3900--3909, 2018.

\bibitem[Zhou and Lange(2015)]{zhou2015path}
Hua Zhou and Kenneth Lange.
\newblock Path following in the exact penalty method of convex programming.
\newblock \emph{Computational optimization and applications}, 61\penalty0
  (3):\penalty0 609--634, 2015.

\bibitem[Zhou and Wu(2014)]{zhou2014generic}
Hua Zhou and Yichao Wu.
\newblock A generic path algorithm for regularized statistical estimation.
\newblock \emph{Journal of the American Statistical Association}, 109\penalty0
  (506):\penalty0 686--699, 2014.

\end{thebibliography}

\appendix

\section{Proofs}

    \subsection{Proof of \cref{prop:ode}}
    For fixed $\lambda \in [\lambda_{\min}, \lambda_{\max}]$, we have $\frac{\xi(\lambda)}{\lambda}$ is a constant and its absolute value is no larger than $C$. 
    Let $H_i = \nabla^2 f(x_1) + \lambda \nabla^2 \Omega(x_1)$ and $g_i = \nabla f(x_i)$ for $i = 1, 2$. 
    Since $f(\cdot)$ is $\mu$-strongly convex and $\Omega(\cdot)$ is $\sigma$ strong convex, we have $\vvta{H_i} \ge \mu + \lambda_{\min} \sigma$ for $i = 1, 2$. 
    Since $f(\cdot)$ is $L$-Lipschitz continuous, we have $\vvta{g_2} \le L$. 
    Also, let $v_1 = H_1^{-1} g_1$, $v_2 = H_2^{-1} g_2$, and $v_1' = H_1^{-1} g_2$. 
    t holds that 
    \begin{equation*}
        \vvta{v_1 - v_1'} = \vvta{H_1^{-1} (g_1-g_2)} \le \vvta{H_1^{-1}} \cdot \vvta{g_1-g_2} \le \tfrac{1}{\mu+\lambda_{\min}\sigma} \cdot L \vvta{x_1-x_2}. 
    \end{equation*}
    Also, since $\vvta{H_1 - H_2} \le \vvta{\nabla^2 f(x_1) - \nabla^2 f(x_2)} + \lambda \vvta{\nabla^2 \Omega(x_1)- \nabla^2 \Omega(x_2)} \le L(1+\lambda_{\max}) \vvta{x_1-x_2}$ and $\vvta{H_1-H_2} = \vvta{H_1 (H_1^{-1}-H_2^{-1}) H_2} \le \vvta{H_1} \cdot \vvta{H_1^{-1}-H_2^{-1}} \cdot \vvta{H_2}$, it holds that $\vvta{H_1^{-1}-H_2^{-1}} \le \tfrac{L(1+\lambda_{\max}) \vvta{x_1-x_2}}{(\mu+\lambda_{\min}\sigma)^2}$. 
    Therefore, we have 
    \begin{equation*}
        \vvta{v_1'-v_2} = \vvta{\lr{H_1^{-1}-H_2^{-1}} g_2} \le \vvta{H_1^{-1}-H_2^{-1}} \cdot \vvta{g_2} \le \tfrac{L^2(1+\lambda_{\max}) \vvta{x_1-x_2}}{(\mu+\lambda_{\min}\sigma)^2}. 
    \end{equation*}
    To conclude, since $v(x_1, \lambda) - v(x_2, \lambda) = \frac{\xi(\lambda)}{\lambda} \cdot (v_1-v_2)$, it holds that 
    \begin{equation*}
        \vvta{v(x_1, \lambda) - v(x_2, \lambda)} \le (\tfrac{LC}{\mu+\lambda_{\min}\sigma} + \tfrac{L^2C(1+\lambda_{\max})}{(\mu+\lambda_{\min}\sigma)^2}) \cdot \vvta{x_1-x_2}. 
    \end{equation*}
    
    \subsection{Proof of \cref{lmm:trapezoid-local}}
	In the following residual analysis, we will work with the high-order directional derivatives of $f(\cdot)$ and $\Omega(\cdot)$. 
	We now introduce the definition of the directional derivatives. 
	For $p \ge 1$, let $D^p f(x) [h_1, \dots, h_p]$ denote the directional derivative of function $f$ at $x$ along directions $h_i, i = 1, \dots, p$. 
	For example, $D f(x) [h] = \nabla f(x)^T h$ and $D^2 f(x) [h_1, h_2] = h_1^T \nabla^2 f(x) h_2$. 
	Furthermore, the norm of directional derivatives is defined as $\vvta{D^p f(x)} := \max_{h_1, \dots, h_p} \lrrr{\vt{D^p f(x) [h_1, \dots, h_p]}: \vvta{h_i} \le 1}$.  
	For detailed properties of directional derivatives, we refer the reader to \citet{nesterov2018implementable}. 
	We start with computational guarantees of the Taylor approximation on functions with Lipschitz continuous high-order derivatives. 
	The following lemma guarantees the accuracy of the Taylor expansion. 
	\begin{lemma}[$(1.5)$ and $(1.6)$ in \citet{nesterov2018implementable}] \label{lm:acc-tay}
Let the function $\phi(\cdot)$ be convex and $p$-times differentiable. 
	    Suppose $p$-th order derivative of $\phi(\cdot)$ is $L_p$ Lipschitz continuous. 
	    Let $\Phi_{x, p}(\cdot)$ denote the Taylor approximation of function $\phi(\cdot)$ at $x$: $\Phi_{x, p}(y) := f(x) + \sum_{i=1}^p \frac{1}{i!} D^i \phi(x) [y-x]^i$. 
		Then we have the following guarantees: 
		\begin{equation*}
		    \vt{\phi(y) - \Phi_{x, p}(y)} \le \frac{L_p}{(p+1)!} \vvta{y-x}^{p+1}, \quad \vvta{\nabla \phi(y) - \nabla \Phi_{x, p}(y)} \le \frac{L_p}{p!} \vvta{y-x}^{p}. 
		\end{equation*}
	\end{lemma}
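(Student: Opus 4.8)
The plan is to reduce both bounds to one-dimensional statements along the segment $[x,y]$ and apply Taylor's theorem with integral remainder, truncating the expansion at exactly the highest order of derivative that is assumed to exist, and then converting the remainder into a difference of top-order derivatives so that the Lipschitz hypothesis can be invoked. Throughout, write $h := y - x$ and recall that for $k \le p$ the directional derivative $D^k\phi(z)[h^{k-1}, \cdot]$ is a vector whose norm is at most $\vvt{D^k\phi(z)}\,\vvt{h}^{k-1}$, and that $L_p$-Lipschitz continuity of the $p$-th derivative means $\vvt{D^p\phi(z_1) - D^p\phi(z_2)} \le L_p \vvt{z_1 - z_2}$ in the operator norm on $p$-linear forms.

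For the gradient bound, I would introduce the vector-valued curve $F(t) := \nabla\phi(x + th)$ on $[0,1]$, whose derivatives are $F^{(k)}(t) = D^{k+1}\phi(x+th)[h^k, \cdot]$ for $0 \le k \le p-1$. A direct reindexing shows that the degree-$(p-1)$ Taylor polynomial of $F$ at $t=0$ is exactly $\nabla\Phi_{x,p}(y)$, so that $\nabla\phi(y) - \nabla\Phi_{x,p}(y) = F(1) - \sum_{k=0}^{p-1}\frac{1}{k!}F^{(k)}(0)$. Expanding $F$ only to order $p-2$ (the last order whose integral remainder involves $F^{(p-1)}$, the top derivative available) and using the elementary identity $\int_0^1 \frac{(1-t)^{p-2}}{(p-2)!}\,\dd t = \frac{1}{(p-1)!}$ to rewrite $\frac{1}{(p-1)!}F^{(p-1)}(0)$ as an integral, the defect collapses to
\[
\nabla\phi(y) - \nabla\Phi_{x,p}(y) = \int_0^1 \frac{(1-t)^{p-2}}{(p-2)!}\lr{F^{(p-1)}(t) - F^{(p-1)}(0)}\,\dd t .
\]
Since $F^{(p-1)}(t) - F^{(p-1)}(0) = \lr{D^p\phi(x+th) - D^p\phi(x)}[h^{p-1}, \cdot]$, the Lipschitz hypothesis gives $\vvt{F^{(p-1)}(t) - F^{(p-1)}(0)} \le L_p\, t\,\vvt{h}^p$, and the Beta-function integral $\int_0^1 \frac{(1-t)^{p-2}}{(p-2)!}\,t\,\dd t = \frac{1}{p!}$ yields the claimed bound $\frac{L_p}{p!}\vvt{h}^p$.

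For the function-value bound I would run the identical argument on the scalar curve $g(t) := \phi(x+th)$, whose derivatives are $g^{(k)}(t) = D^k\phi(x+th)[h^k]$ and whose degree-$p$ Taylor polynomial at $0$ equals $\Phi_{x,p}(y)$. Expanding $g$ to order $p-1$, subtracting off $\frac{1}{p!}g^{(p)}(0)$ via $\int_0^1\frac{(1-t)^{p-1}}{(p-1)!}\,\dd t = \frac{1}{p!}$, and bounding $\vt{g^{(p)}(t) - g^{(p)}(0)} \le L_p\, t\,\vvt{h}^{p+1}$ leaves the Beta integral $\int_0^1 \frac{(1-t)^{p-1}}{(p-1)!}\,t\,\dd t = \frac{1}{(p+1)!}$, which produces $\frac{L_p}{(p+1)!}\vvt{h}^{p+1}$.

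The routine part is the Taylor-with-remainder bookkeeping and the two Beta integrals; the step I expect to require the most care is the \emph{truncation trick} — stopping one order short and re-expressing the last Taylor coefficient as an integral times $F^{(p-1)}(0)$ (resp. $g^{(p)}(0)$), since this is what turns a remainder that would otherwise demand the nonexistent $(p+1)$-st derivative into a telescoped difference of the top available derivative, to which Lipschitz continuity applies. A secondary subtlety is the correct operator-norm accounting for directional derivatives with one argument slot left open, i.e. that plugging $h$ into $p-1$ of the $p$ slots of $D^p\phi$ contributes a factor $\vvt{h}^{p-1}$ rather than $\vvt{h}^p$. Note that convexity of $\phi$ is not actually used anywhere in this argument.
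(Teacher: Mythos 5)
Your proof is correct. Note, however, that the paper does not prove this lemma at all: it is imported wholesale as a citation (inequalities (1.5) and (1.6) of Nesterov's paper on implementable tensor methods), so what you have done is supply the standard first-principles argument that the paper treats as a black box. Your argument is the right one and all the details check out: the reindexing identity $\nabla\Phi_{x,p}(y) = \sum_{k=0}^{p-1}\tfrac{1}{k!}D^{k+1}\phi(x)[h^k,\cdot]$ is exactly the degree-$(p-1)$ Taylor polynomial of $F(t)=\nabla\phi(x+th)$ evaluated at $t=1$; the truncation trick correctly converts the last Taylor coefficient into $\int_0^1 \tfrac{(1-t)^{p-2}}{(p-2)!}F^{(p-1)}(0)\,\dd t$ so that the defect becomes a telescoped difference of the top available derivative; and the Beta integrals are right, since $\int_0^1\tfrac{(1-t)^{p-2}}{(p-2)!}\,t\,\dd t = \tfrac{1}{p!}$ and $\int_0^1\tfrac{(1-t)^{p-1}}{(p-1)!}\,t\,\dd t = \tfrac{1}{(p+1)!}$, which produce precisely the constants $\tfrac{L_p}{p!}$ and $\tfrac{L_p}{(p+1)!}$. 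Your closing observations are also accurate: convexity is never used, and the operator-norm accounting $\vvt{(D^p\phi(z_1)-D^p\phi(z_2))[h^{p-1},\cdot]} \le L_p\vvt{z_1-z_2}\,\vvt{h}^{p-1}$ is the only point where care is needed. Two small caveats worth recording: the gradient argument as written presumes $p\ge 2$ (for $p=1$ the polynomial $\Phi_{x,1}$ has constant gradient $\nabla\phi(x)$ and the bound is literally the Lipschitz definition), and the integral-remainder form requires $F^{(p-1)}$ to be continuous, which is guaranteed here because the $p$-th derivative is Lipschitz. Neither affects the validity of the proof.
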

	\begin{condition}\label{cond:smallstep}
		Suppose step-size $h$ satisfies that $h \le \min \lrrr{0.2, \frac{\tilde{\mu}}{8L(1+G)}}$. 
	\end{condition}

    For simplicity, we use $(\tilde{x}, \tilde{\lambda})$ to represent $(x_{\tn{next}}, \lambda_{\tn{next}})$. 
	We will begin the complexity analysis of trapezoid method by the following two lemmas which provide proper upper bounds the norm of direction $\vvta{d_1}$ and the difference between $d_1$ and $d_2$. 

	\begin{lemma}\label{lm:bound-norm}
		Suppose $\sigma \ge 1$, $x \in S_{x_0}$, $\lambda \in [\lambda_{\min}, \lambda_{\max}]$, $h > 0$ and $(\tilde{x}, \tilde{\lambda}) = T(x, \lambda; h)$. 
		Let $r$ denote the initial residual $\vvta{\nabla f(x) + \lambda \nabla \Omega(x)}$ that satisfies $r \le \tilde{\mu}$. 
		Then we have $(1 + \lambda) \vvta{d_1} \le 2(G+1)$. 
	\end{lemma}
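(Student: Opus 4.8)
The plan is to bound $\vvta{d_1}$ directly from the closed form of the velocity field and then distinguish two regimes for $\lambda$. First I would record that, in the exponentially decaying regime considered in \Cref{sec:multi}, we have $\xi(\lambda) = -\lambda$, so $\tfrac{\xi(\lambda)}{\lambda} = -1$ and hence $d_1 = v(x,\lambda) = -H^{-1}\nabla f(x)$ where $H := \nabla^2 f(x) + \lambda \nabla^2 \Omega(x)$. Since $f(\cdot)$ is $\mu$-strongly convex and $\Omega(\cdot)$ is $\sigma$-strongly convex, we have $H \succeq (\mu + \lambda \sigma) I$ and therefore $\vvta{H^{-1}} \le \tfrac{1}{\mu + \lambda \sigma}$, which gives the basic estimate $\vvta{d_1} \le \tfrac{\vvta{\nabla f(x)}}{\mu + \lambda \sigma}$. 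The entire argument then reduces to controlling the quantity $(1+\lambda)\vvta{\nabla f(x)}/(\mu+\lambda\sigma)$.

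The key observation, and the main obstacle, is that no single bound on $\vvta{\nabla f(x)}$ works across the whole range of $\lambda$. I would prepare two estimates: (a) the a priori bound $\vvta{\nabla f(x)} \le G$, valid because $x \in S_{x_0}$; and (b) the residual-based bound $\vvta{\nabla f(x)} \le \vvta{\nabla f(x) + \lambda \nabla \Omega(x)} + \lambda \vvta{\nabla \Omega(x)} \le r + \lambda G \le \tilde{\mu} + \lambda G$, which uses both the hypothesis $r \le \tilde{\mu}$ and $x \in S_{x_0}$. Bound (a) is $\lambda$-independent and hence strong for large $\lambda$, while (b) keeps the numerator proportional to $\tilde{\mu}$ for small $\lambda$, which is exactly what is needed to neutralize the potential blow-up of the ratio $\tfrac{1+\lambda}{\mu+\lambda\sigma}$ near $\lambda_{\min}$ (note this ratio equals $\tau$ in the worst case). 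Recognizing which bound to deploy in which regime is the crux: using (b) alone fails for large $\lambda$, and using (a) alone fails for small $\lambda$ when $\mu = 0$.

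I would then split at $\lambda = 1$. For $\lambda \ge 1$, apply (a): $(1+\lambda)\vvta{d_1} \le \tfrac{(1+\lambda)G}{\mu+\lambda\sigma} \le \tfrac{(1+\lambda)G}{\lambda\sigma} \le 2G \le 2(G+1)$, using $\mu \ge 0$, $\sigma \ge 1$, and $\tfrac{1+\lambda}{\lambda} \le 2$. For $\lambda < 1$, apply (b): since $\lambda \ge \lambda_{\min}$ gives $\mu + \lambda\sigma \ge \tilde{\mu}$, we have $\tfrac{\tilde{\mu}}{\mu+\lambda\sigma} \le 1$, and $\tfrac{\lambda G}{\mu+\lambda\sigma} \le \tfrac{G}{\sigma} \le G$, so $(1+\lambda)\vvta{d_1} \le (1+\lambda)(1+G) \le 2(G+1)$. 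Both cases yield the claim. The only remaining routine checks are the spectral lower bound on $H$ and the two elementary fraction estimates, all of which follow immediately from $\sigma \ge 1$, $\mu \ge 0$, and $\lambda \ge \lambda_{\min}$.
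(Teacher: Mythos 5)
Your proof is correct and follows essentially the same route as the paper: both bound $\vvta{d_1} \le \vvta{H^{-1}}\,\vvta{\nabla f(x)}$ via strong convexity, split at $\lambda = 1$, use $\vvta{\nabla f(x)} \le G$ for $\lambda \ge 1$, and for $\lambda < 1$ decompose $\nabla f(x)$ into the residual plus $\lambda \nabla \Omega(x)$ (you apply the triangle inequality before $H^{-1}$, the paper after — an immaterial difference). The regime-splitting insight you identify as the crux is exactly the paper's argument.
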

	\begin{proof}
		Let $H = \nabla^2 f(x) + \lambda \nabla^2 \Omega(x)$. 
		By definition $d_1 = - H^{-1} \nabla f(x)$.  
		When $\lambda \ge 1$, we have $(1+\lambda) \vvta{d_1} \le \frac{1+\lambda}{\lambda} G \le 2$. 
		Also when $\lambda \le 1$, it holds that 
		\begin{equation*}
			(1+\lambda) \vvta{d_1} \le (1+\lambda) \lr{\vvta{H^{-1} \lambda \nabla \Omega(x)} + \vvta{H^{-1} (\nabla f(x) + \lambda \nabla \Omega(x))}} \le 2(G+1). 
		\end{equation*}
	\end{proof}

	\begin{lemma}\label{lm:diff-ind-reg}
		Suppose \cref{as:third-lips} and \cref{cond:smallstep} hold. 
		Let $(x, \lambda) \in \bbR^n \times \bbR^+$, and let $x_i, \lambda_i, d_i, i \in \{1, 2\}$ be generated by the trapezoid update scheme defined in \cref{eq:trapezoid}, we have 
		\begin{equation*}
			\vvta{\tilde{H}_1 (d_2 - d_1) - \nabla^2 f(x_1) (x_1-x_2) - (\tilde{H}_1 - \tilde{H}_2) d_2} \le \tfrac{L}{2} \vvta{x_1-x_2}^2,  
		\end{equation*}
		where $\tilde{H}_i = \nabla^2 f(x_i) + \lambda_i \nabla^2 \Omega(x_i), i \in \lrrr{1, 2}$. 
		Furthermore, it holds that 
		\begin{equation*}
			\vvta{d_2-d_1} \le 2 h L \tau (\vvta{d_1} + \vvta{d_1}^2).  
		\end{equation*}
	\end{lemma}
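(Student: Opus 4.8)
The plan is to exploit the defining relation of the trapezoid directions under the exponentially decaying schedule ($\xi(\lambda) = -\lambda$, so that $v(x,\lambda) = -\tilde{H}^{-1}\nabla f(x)$), namely that each direction satisfies $\tilde{H}_i d_i = -\nabla f(x_i)$ for $i \in \{1,2\}$, together with the second-order Taylor remainder bound of \cref{lm:tay-exp}. Recall also that the update gives $x_2 = x_1 + h d_1$ and $\lambda_2 = (1-h+h^2)\lambda_1$, so $x_1 - x_2 = -h d_1$.

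For the first inequality I would substitute $\tilde{H}_1 d_1 = -\nabla f(x_1)$ and $\tilde{H}_2 d_2 = -\nabla f(x_2)$. The terms in $d_2$ cancel, since $\tilde{H}_1(d_2 - d_1) - (\tilde{H}_1 - \tilde{H}_2)d_2 = \tilde{H}_2 d_2 - \tilde{H}_1 d_1 = \nabla f(x_1) - \nabla f(x_2)$, and the whole left-hand side collapses to $\nabla f(x_1) - \nabla f(x_2) - \nabla^2 f(x_1)(x_1 - x_2)$, which is exactly the negative of the first-order Taylor remainder of $\nabla f$ at $x_1$ evaluated at $x_2$. Applying \cref{it:tay-exp-1} of \cref{lm:tay-exp} to $f$ then yields the bound $\tfrac{L}{2}\vvta{x_1 - x_2}^2$ immediately.

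For the ``Furthermore'' inequality I would start from the identity just derived, written as $\tilde{H}_1(d_2 - d_1) = \nabla^2 f(x_1)(x_1 - x_2) + (\tilde{H}_1 - \tilde{H}_2)d_2 + E$ with $\vvta{E} \le \tfrac{L}{2}\vvta{x_1-x_2}^2$. The obstacle is the term $(\tilde{H}_1 - \tilde{H}_2)d_2$, which still contains the unknown $d_2$. The key maneuver is to split $d_2 = d_1 + (d_2 - d_1)$ and move $(\tilde{H}_1 - \tilde{H}_2)(d_2 - d_1)$ to the left, which converts the left-hand side into $\tilde{H}_2(d_2 - d_1)$. Strong convexity then supplies the clean lower bound $\vvta{\tilde{H}_2(d_2 - d_1)} \ge (\mu + \lambda_2\sigma)\vvta{d_2 - d_1}$, so it remains to bound the three right-hand terms: $\vvta{\nabla^2 f(x_1)(x_1-x_2)} \le Lh\vvta{d_1}$ (using $\vvta{\nabla^2 f}\le L$), $\vvta{E}\le \tfrac{L}{2}h^2\vvta{d_1}^2$, and $\vvta{(\tilde{H}_1 - \tilde{H}_2)d_1}$.

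To control the last term I would bound $\vvta{\tilde{H}_1 - \tilde{H}_2}$ by writing it as $(\nabla^2 f(x_1) - \nabla^2 f(x_2)) + \lambda_1(\nabla^2\Omega(x_1) - \nabla^2\Omega(x_2)) + (\lambda_1 - \lambda_2)\nabla^2\Omega(x_2)$, and applying Hessian-Lipschitzness with $\vvta{x_1 - x_2} = h\vvta{d_1}$ and $\lambda_1 - \lambda_2 = h(1-h)\lambda_1 \le h\lambda_1$, giving $\vvta{\tilde{H}_1 - \tilde{H}_2} \le hL[(1+\lambda_1)\vvta{d_1} + \lambda_1]$. Collecting the three terms yields a bound of the form $\vvta{d_2 - d_1} \le \tfrac{hL}{\mu+\lambda_2\sigma}[(1+\lambda_1)\vvta{d_1} + (1+\lambda_1+\tfrac{h}{2})\vvta{d_1}^2]$. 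The final step uses \cref{cond:smallstep} ($h \le 0.2$), under which $\lambda_2 = (1-h+h^2)\lambda_1 \ge 0.84\,\lambda_1$ forces $\mu + \lambda_2\sigma \ge 0.84(\mu+\lambda_1\sigma)$; combined with $\tfrac{1+\lambda_1}{\mu+\lambda_1\sigma}\le\tau$ and $1 + \lambda_1 + \tfrac{h}{2} \le 1.1(1+\lambda_1)$, both coefficients are at most $\tfrac{1.1}{0.84}\tau < 2\tau$, which delivers $\vvta{d_2 - d_1} \le 2hL\tau(\vvta{d_1} + \vvta{d_1}^2)$. I expect the bookkeeping of $\vvta{\tilde{H}_1 - \tilde{H}_2}$ and the constant-chasing in this last step to be the only delicate part; everything else is a direct consequence of the $\tilde{H}_i d_i = -\nabla f(x_i)$ identity and \cref{lm:tay-exp}.
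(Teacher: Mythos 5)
Your proposal is correct, and the first inequality is proved exactly as in the paper: both arguments use $\tilde{H}_1 d_1 = -\nabla f(x_1)$, $\tilde{H}_2 d_2 = -\nabla f(x_2)$ to collapse the left-hand side to the Taylor remainder $\nabla f(x_1) - \nabla f(x_2) - \nabla^2 f(x_1)(x_1-x_2)$ and then invoke \cref{it:tay-exp-1} of \cref{lm:tay-exp}. For the ``furthermore'' inequality, however, your route is genuinely different from the paper's. The paper keeps $\tilde{H}_1(d_2-d_1)$ on the left, so the term $(\tilde{H}_1-\tilde{H}_2)d_2$ forces it to control $\vvta{d_2}$; it does so by a bootstrap: using \cref{cond:smallstep} together with \cref{lm:bound-norm} it shows the coefficient multiplying $\vvta{d_2}$ is at most $\tilde{\mu}/3$, deduces $\vvta{d_2}\le 2\vvta{d_1}$, and only then substitutes back to reach $\vvta{\tilde{H}_1(d_1-d_2)} \le 2hL(1+\lambda)(\vvta{d_1}+\vvta{d_1}^2)$. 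Your maneuver of moving $(\tilde{H}_1-\tilde{H}_2)(d_2-d_1)$ to the left, which turns the operator into $\tilde{H}_2$ and leaves only $(\tilde{H}_1-\tilde{H}_2)d_1$ on the right, eliminates $d_2$ from the right-hand side entirely, so no bootstrap and no appeal to \cref{lm:bound-norm} or to the $h \le \tilde{\mu}/(8L(1+G))$ half of \cref{cond:smallstep} is needed --- only $h\le 0.2$ enters, via $\mu+\lambda_2\sigma \ge 0.84(\mu+\lambda_1\sigma)$ and $1+\lambda_1+h/2 \le 1.1(1+\lambda_1)$, and your constant-chasing ($1.1/0.84 < 2$) checks out. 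Your argument is therefore cleaner and works under a formally weaker hypothesis. The one thing the paper's bootstrap buys that yours does not is the byproduct $\vvta{d_2}\le 2\vvta{d_1}$, which the paper silently reuses later in the proof of \cref{lmm:trapezoid-local} (to convert terms like $\vvta{d_1}^2\vvta{d_2}$ and $\vvta{\tfrac{d_1+d_2}{2}}^3$ into powers of $\vvta{d_1}$); since that estimate is not part of the statement you were asked to prove, this does not affect the correctness of your proof, but it is worth knowing if you intend to carry your version forward into the residual analysis.
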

	\begin{proof}
		Using the definition of $d_1, d_2$ we have 
		\begin{equation}\label{eq:diff-ind-reg-1}\begin{aligned}
			\tilde{H}_1 (d_2 - d_1) &= \nabla f(x_1) - \nabla f(x_2) + (I - \tilde{H}_1 \tilde{H}_2^{-1}) \nabla f(x_2) \\
			&= \nabla^2 f(x_1) (x_1 - x_2) + (\tilde{H}_1 - \tilde{H}_2) d_2 + (R), \\
		\end{aligned}\end{equation}
		where $\vvta{(R)} = \vvta{\nabla f(x_1) - \nabla f(x_2) - \nabla^2 f(x_1) (x_1 - x_2)} \le \tfrac{h^2}{2} \cdot L \vvta{d_1}^2$. 
		Also, we have $\vvta{\nabla^2 f(x_1) (x_1 - x_2)} = h \vvta{\nabla^2 f(x_1) d_1} \le h L \vvta{d_1}$, and that 
		\begin{equation*}\begin{aligned}
			& \vvta{(\tilde{H}_1 - \tilde{H}_2) d_2} \\
			= & \vvta{\lr{\nabla^2 f(x_1) - \nabla^2 f(x_2) + \lambda_1 (\nabla^2 \Omega(x_1) - \nabla^2 \Omega(x_2)) + (\lambda_1-\lambda_2) \nabla^2 \Omega(x_2)} d_2 } \\
			\le & \lr{L \vvta{x_1-x_2} + \lambda_1 L \vvta{x_1-x_2} + \vt{\lambda_1-\lambda_2} L} \vvta{d_2} 
			\le h \lr{L(\lambda+1) \vvta{d_1} + \lambda L} \vvta{d_2}. 
		\end{aligned}\end{equation*}
		Hence, it holds that 
		\begin{equation}\label{eq:lm:diff-eq2}\begin{aligned}
			&\tilde{\mu} \vvta{d_2} - \tilde{\mu} \vvta{d_1} \le \tilde{\mu} \vvta{d_2-d_1} \le \vvta{\tilde{H}_1 (d_2 - d_1)} \\
			\le & \tfrac{h^2}{2} \cdot L \vvta{d_1}^2 + h L \vvta{d_1} + h \lr{L(\lambda+1) \vvta{d_1} + \lambda L} \vvta{d_2}. 
		\end{aligned}\end{equation}
		When $h$ satisfies \cref{cond:smallstep}, it holds that $h \lr{L(\lambda+1) \vvta{d_1} + \lambda L} \le \tfrac{\tilde{\mu}}{3}$ and $\tfrac{h^2}{2} \cdot L \vvta{d_1} + h L \le \tfrac{\tilde{\mu}}{3}$. 
		Apply them to \cref{eq:lm:diff-eq2}, we have $\tfrac23 \tilde{\mu} \vvta{d_2} \le \tfrac43 \tilde{\mu} \vvta{d_1}$ and it implies that $\vvta{d_2} \le 2 \vvta{d_1}$. 
		Apply it to \cref{eq:lm:diff-eq2}, it holds that 
		\begin{align*}
			\vvta{\tilde{H}_1 (d_1 - d_2)} &\le \tfrac{h^2}{2} \cdot L \vvta{d_1}^2 + h L \vvta{d_1} + h \lr{L(\lambda_0+1) \vvta{d_1} + \lambda L} \vvta{d_2} \\
			&\le 2 h L (1+\lambda) (\vvta{d_1}+\vvta{d_1}^2).  
		\end{align*}
		Hence, it holds that $\vvta{d_1-d_2} \le h \cdot \tfrac{2 L (1+\lambda) (\vvta{d_1}+\vvta{d_1}^2)}{\mu + \lambda \sigma} \le 2 h L \tau (\vvta{d_1} + \vvta{d_1}^2)$. 
	\end{proof}

	Based on the results of \cref{lm:bound-norm} and \cref{lm:diff-ind-reg}, the following theorem analyzes the one-step residual accumulation of the trapezoid update in \cref{eq:trapezoid}. 
	\begin{proof}[Proof of \cref{lmm:trapezoid-local}]
		We will begin the local residual analysis by estimating the difference between $\nabla F_{\tilde{\lambda}}(\tilde{x})$ and $\tfrac{\tilde{\lambda}}{\lambda} \cdot \nabla F_{\lambda}(x)$. 
		For simplicity, let $x$, $\tilde{x}$, $\lambda$, and $\tilde{\lambda}$ denote $x_k$, $x_{k+1}$, $\lambda_k$, and $\lambda_{k+1}$ in the trapezoid update \cref{eq:trapezoid} respectively. 
		Then we have
		\begin{equation*}
			R = \nabla F_{\tilde{\lambda}}(\tilde{x}) - \tfrac{\tilde{\lambda}}{\lambda} \cdot \nabla F_{\lambda}(x) = \underbrace{\nabla f(\tilde{x}) - \nabla f(x)}_{(A)} + \underbrace{\tilde{\lambda} (\nabla \Omega(\tilde{x}) - \nabla \Omega(x))}_{(B)} + \underbrace{(1 - \tfrac{\tilde{\lambda}}{\lambda}) \nabla f(x)}_{(C)}. 
		\end{equation*}
		We will approach the result in \cref{eq:third-err} by splitting and rearranging the terms in $(A)$, $(B)$ and $(C)$. 
		From \cref{lm:acc-tay}, it holds that 
		\begin{equation*}\vvta{(RA)} := \vvta{ (A) - \underbrace{\nabla^2 f(x) (\tilde{x}-x)}_{(A')} - \underbrace{\tfrac12 D^3 f(x) [\tilde{x}-x]^2}_{(A3)} } \le \tfrac{L}{6} \vvta{\tilde{x}-x}^3 = \tfrac{h^3}{6} L \Vert \tilde{d} \Vert^3. \end{equation*}
		From the update \cref{eq:trapezoid}, we have $(A') = \underbrace{h \nabla^2 f(x) d_1}_{(A1)} + \underbrace{\tfrac{h}{2} \nabla^2 f(x) (d_2-d_1)}_{(A2)}$. 
		For $(B)$, using \cref{lm:acc-tay} and based on update \cref{eq:trapezoid} we have 
		\begin{equation*}\begin{aligned}
			(B) &= \lambda \nabla^2 \Omega(x) (\tilde{x} - x) + \underbrace{(\tilde{\lambda}-\lambda) \nabla^2 \Omega(x) (\tilde{x} - x)}_{(B3)} + \underbrace{\tilde{\lambda} \cdot \tfrac12 D^3 \Omega(x) [\tilde{x}-x]^2}_{(B4)} + (RB) \\
			&= \underbrace{h \lambda \nabla^2 \Omega(x) d_1}_{(B1)} + \underbrace{\tfrac{h}{2} \lambda \nabla^2 \Omega(x) (d_2-d_1)}_{(B2)} + (B3) + (B4) + (RB),   
		\end{aligned}\end{equation*}
		where $\vvta{(RB)} = \tilde{\lambda} \vvta{\nabla \Omega(\tilde{x}) - \nabla \Omega(x) - \nabla^2 \Omega(x) (\tilde{x} - x) - \tfrac12 D^3 \Omega(x) [\tilde{x}-x]^2} \le \tfrac{h^3 \tilde{\lambda} L}{6} \vvta{\tilde{d}}^3$. 
		Also, $(C) = (h - \tfrac{h^2}{2}) \nabla f(x) = \underbrace{h \nabla f(x)}_{(C1)} - \underbrace{\tfrac{h^2}{2} \nabla f(x)}_{(C2)}$. 
		By the definition of $d_1$, we have 
		\begin{equation}\label{eq:local-trap-1}
			(A1) + (B1) + (C1) = h \nabla^2 f(x) d_1 + h \lambda \nabla^2 \Omega(x) d_1 + h \nabla f(x) = 0. 
		\end{equation}
		Using \cref{lm:diff-ind-reg}, we have
		\begin{equation}\label{eq:local-trap-2}\begin{aligned}
			(A2) + (B2) &= \underbrace{\tfrac{h}{2} \nabla^2 f(x) (x_1-x_2)}_{(D1)} + \underbrace{\tfrac{h}{2} (\nabla^2 f(x_1) - \nabla^2 f(x_2)) d_2}_{(D2)} \\ 
			&\phantom{=} + \underbrace{\tfrac{h}{2} (\lambda_1  \nabla^2 \Omega(x_1) - \lambda_2 \nabla^2 \Omega(x_2)) d_2}_{(D3)} + (RD), 
		\end{aligned}\end{equation}
		where $\vvta{(RD)} \le \tfrac{h}{2} \cdot \tfrac{L}{2} \vvta{x_1-x_2}^2 = \tfrac{h^3}{4} L \vvta{d_1}^2$. 
		Furthermore, it holds that 
		\begin{equation}\label{eq:local-trap-3}
			(D1) + (C2) = -\tfrac{h^2}{2} \nabla^2 f(x) d_1 - \tfrac{h^2}{2} \nabla f(x) = \underbrace{\tfrac{h^2}{2} \lambda \nabla^2 \Omega(x) d_1}_{(E1)}, 
		\end{equation}
		and 
		\begin{equation*}\begin{aligned}
			(A3) + (D2) &= \tfrac{h^2}{2} D^3 f(x) \lrr{\tfrac12 (d_1+d_2)}^2 + \tfrac{h}{2} D^3 f(x) [x_1-x_2, d_2] + (R1) \\
			&= \underbrace{\tfrac{h^2}{2} D^3 f(x) \lrr{\tfrac12 (d_1-d_2)}^2}_{(R2)} + (R1), \\
		\end{aligned}\end{equation*}
		where 
		\begin{equation*}\begin{aligned}
			\vvta{(R1)} &= \vvta{\tfrac{h}{2} \lr{\nabla^2 f(x_1) - \nabla^2 f(x_2)} d_2 - \tfrac{h}{2} D^3 f(x) [x_1-x_2, d_2]} \\
			&\le \tfrac{h}{2} \cdot \tfrac{L}{2} \vvta{x_1-x_2}^2 \vvta{d_2} = \tfrac{h^3}{4} L \vvta{d_1}^2 \vvta{d_2}. 
		\end{aligned}\end{equation*}
		We further have $(D3) = \underbrace{\tfrac{h}{2} \lambda_2 \lr{\nabla^2 \Omega(x_1) - \nabla^2 \Omega(x_2)} d_2}_{(E2)} + \underbrace{\tfrac{h}{2} (\lambda_1 - \lambda_2) \nabla^2 \Omega(x_1) d_2}_{(E3)}$, 
		and 
		\begin{equation*}\begin{aligned}
			(B4) + (E2) &= \tilde{\lambda} \cdot \tfrac12 D^3 \Omega(x) [\tilde{x}-x]^2 + \tfrac{h}{2} \lambda_2 D^3 \Omega(x_1) [x_1-x_2, d_2] + (R5) \\
			&= \underbrace{\tfrac{h^2}{2} (\tilde{\lambda}-\lambda_2) D^3 \Omega(x) \lrr{\tfrac{d_1+d_2}{2}}^2}_{(R3)} + \underbrace{\tfrac{h^2}{2} \lambda_2 D^3 \Omega(x_1) \lrr{\tfrac{d_1-d_2}{2}}^2}_{(R4)} + (R5), \\
		\end{aligned}\end{equation*}
		where 
		\begin{equation*}\begin{aligned}
			\vvta{(R5)} &= \vvta{\tfrac{h}{2} \lambda_2 \lr{\nabla^2 \Omega(x_1) - \nabla^2 \Omega(x_2)} d_2 - \tfrac{h}{2} \lambda_2 D^3 \Omega(x) [x_1-x_2, d_2]} \\
			&\le \tfrac{h}{2} \lambda_2 \cdot \tfrac{L}{2} \vvta{x_1-x_2}^2 \vvta{d_2} = \tfrac{h^3}{4} \lambda_2 L \vvta{d_1}^2 \vvta{d_2}, 
		\end{aligned}\end{equation*}
		and 
		\begin{equation*}\begin{aligned}
			(B3) + (E1) + (E3) = & (-h+\tfrac{h^2}{2}) \lambda \nabla^2 \Omega(x) h \cdot \tfrac{d_1+d_2}{2} + \tfrac{h^2}{2} \lambda \nabla^2 \Omega(x) d_1 \\
			& + \tfrac{h}{2} \lr{h - h^2} \nabla^2 \Omega(x) d_2 = \underbrace{\tfrac{h^3}{4} \lambda \nabla^2 \Omega(x) (d_1-d_2)}_{(R6)}. 
		\end{aligned}\end{equation*}
		Hence, it holds that 
		\begin{equation}\label{eq:loc-1}\begin{aligned}
			\vvta{(R)} \le & \vvta{(RA)} + \vvta{(RB)} + \vvta{(RD)} + \vvta{(R1)} + \vvta{(R2)} \\ 
			& + \vvta{(R3)} + \vvta{(R4)} + \vvta{(R5)} + \vvta{(R6)} \\
			\le & h^3 \cdot \tfrac{L}{6} \vvta{\tfrac{d_1+d_2}{2}}^3 + h^3 \cdot \tfrac{\tilde{\lambda} L}{6} \vvta{\tfrac{d_1+d_2}{2}}^3 + h^3 \cdot \tfrac{L}{6} \vvta{d_1}^2 \\ 
			& + h^3 \cdot \tfrac{L}{4} \vvta{d_1}^2 \vvta{d_2} + h^2 \cdot \tfrac{L}{8} \vvta{d_1-d_2}^2 + h^4 \cdot \tfrac{\lambda L}{4} \vvta{\tfrac{d_1+d_2}{2}}^2 \\ 
			& + h^2 \cdot \tfrac{\lambda_2 L}{8} \vvta{d_1-d_2}^2 + h^3 \cdot \tfrac{\lambda_2 L}{4} \vvta{d_1}^2 \vvta{d_2} + h^3 \cdot \tfrac{\lambda L}{4} \vvta{d_1-d_2} \\ 
			\le & h^3 \cdot \tfrac{L}{3} \vvta{d_1}^3 + h^3 \cdot \tfrac{L \lambda}{3} \vvta{d_1}^3 + h^3 \cdot \tfrac{L}{6} \vvta{d_1}^2 + h^3 \cdot \tfrac{L}{2} \vvta{d_1}^3 \\
			& + h^2 \cdot \tfrac{L}{8} \vvta{d_1-d_2}^2 + h^4 \cdot \tfrac{\lambda L}{2} \vvta{d_1}^2 + h^2 \cdot \tfrac{\lambda L}{8} \vvta{d_1-d_2}^2 \\ 
			& + h^3 \cdot \tfrac{\lambda L}{2} \vvta{d_1}^3 + h^3 \cdot \tfrac{\lambda L}{4} \vvta{d_1-d_2} \\
			\le & h^3 L (1+\lambda) (\vvta{d_1}^3 + \vvta{d_1}^2 + \vvta{d_1}) + h^2 \cdot \tfrac{L(1+\lambda)}{8} \vvta{d_1-d_2}^2. 
		\end{aligned}\end{equation}
		Apply the result in \cref{lm:diff-ind-reg} into \cref{eq:loc-1}, we can further get 
		\begin{equation}\label{eq:loc-2}
			\vvta{d_1-d_2}^2 \le (2 h L \tau (\vvta{d_1} + \vvta{d_1}^2))^2 \le 8 h^2 L^2 \tau^2 (\vvta{d_1}^2 + \vvta{d_1}^4). 
		\end{equation}
		Also, by applying the results in \cref{lm:bound-norm} to \cref{eq:loc-1,eq:loc-2}, we have 
		\begin{equation*}\begin{aligned}
			\vvta{(R)} \le h^3 \cdot 3 L (1+G)^3 + h^4 \cdot 2 L^3 \tau^2 (1+G)^4.
		\end{aligned}\end{equation*}
	\end{proof}

\end{document}